\tikzset{curve/.style={settings={#1},to path={(\tikztostart)
      .. controls ($(\tikztostart)!\pv{pos}!(\tikztotarget)!\pv{height}!270:(\tikztotarget)$)
      and ($(\tikztostart)!1-\pv{pos}!(\tikztotarget)!\pv{height}!270:(\tikztotarget)$)
      .. (\tikztotarget)\tikztonodes}},
  settings/.code={\tikzset{quiver/.cd,#1}
    \def\pv##1{\pgfkeysvalueof{/tikz/quiver/##1}}},
  quiver/.cd,pos/.initial=0.35,height/.initial=0}
\tikzset{tail reversed/.code={\pgfsetarrowsstart{tikzcd to}}}
\tikzset{2tail/.code={\pgfsetarrowsstart{Implies[reversed]}}}
\tikzset{2tail reversed/.code={\pgfsetarrowsstart{Implies}}}
\tikzset{no body/.style={/tikz/dash pattern=on 0 off 1mm}}
\newtheorem{theorem}{Theorem}[section]
\newtheorem{theorema}{Theorem}
\newtheorem{lemma}[theorem]{Lemma}
\newtheorem{proposition}[theorem]{Proposition}
\newtheorem{corollary}[theorem]{Corollary}
\theoremstyle{definition}
\newtheorem{definition}[theorem]{Definition}
\newtheorem{remark}[theorem]{Remark}
\newtheorem{example}[theorem]{Example}
\newcommand{\parall}{//}
\newcommand{\Hom}{{\operatorname{Hom}}}
\newcommand{\HH}{{\operatorname{HH}}}
\newcommand{\Ext}{{\operatorname{Ext}}}
\DeclareMathOperator{\rank}{rank}
\newcommand{\pirank}{\pi_1\!\text{-}\!\rank}
\DeclareMathOperator{\Walk}{Walk}
\DeclareMathOperator{\cl}{cl}
\DeclareMathOperator{\Inn}{Inn}
\newcommand\sbu[1][.5]{\mathbin{\vcenter{\hbox{\scalebox{#1}{$\bullet$}}}}}
\DeclareMathOperator{\Der}{Der}
\DeclareMathOperator{\op}{op}
\DeclareMathOperator{\rad}{R}
\DeclareMathOperator{\Par}{Par}
\def\sl{\mathfrak{sl}}
\def\A{A}
\def\B{\mathcal{B}}
\title[Hochschild  cohomology and contracted fundamental group]{On the first relative Hochschild  cohomology and contracted fundamental group}
\author{Jonathan Lindell, Lleonard Rubio y Degrassi}
\date{}
\begin{document}

\begin{abstract}
  In this paper we investigate the Lie algebra structure of the first relative Hochschild cohomology and its relation with the relative notion of fundamental group. Let $A,B$ be finite-dimensional basic $k$-algebras over an algebraically closed field of characteristic zero, such that $Q_B$ is a subquiver of $Q_A$. We show that if the complement of $Q_A$ by the arrows of $Q_B$ is a simple directed graph, then the first relative Hochschild cohomology $\HH^1(A|B)$ is a solvable Lie algebra.  We also compute the Lie algebra structure of the first relative Hochschild cohomology for radical square zero algebras and for dual extension algebras of directed monomial algebras. Finally, we introduce the notion of fundamental group for a pair of an algebra $A$ and a subalgebra $B$ and we construct the relative version of the map from the dual fundamental group into the first Hochschild cohomology.

\end{abstract}

\maketitle

\section{Introduction} 

Hochschild cohomology, together with its plethora of structures, stands as a crucial invariant under diverse categorical equivalences \cite{Keller,Keller2,HZS, Rou, KLZ, BR2}. In particular, the (restricted) Lie algebra structure of the first Hochschild cohomology is invariant under derived equivalences. Although Hochschild cohomology is a powerful invariant, even our understanding of the dimension and the Lie algebra structure of the first Hochschild cohomology of finite-dimensional algebras remains quite limited, see for example \cite{Linckelmann2}. In the past years new methods have been introduced to improve our understanding of the graded Lie algebra structure of Hochschild cohomology \cite{NVW,Sua,ER, RSS,LinRyD, CSSS}. One of the main difficulties in computing Hochschild cohomology is due to the fact that Hochschild cohomology is not functorial, so there is no direct way to study Hochschild cohomology using quotient or subalgebras. In particular, this prevents reducing the study of the Lie algebra structure of the first Hochschild cohomology to smaller, and potentially more tractable, Lie subalgebras.
However, in the case of a quotient modulo an idempotent ideal there are some instances of functoriality properties of Hochschild cohomology that emerge.  Indeed, this is the case for one-point extension studied by Happel \cite{Ha} and generalized to the case of triangular matrix algebras by Michelena and Platzeck in \cite{MichelenaPlatzeck2000}, by Green and Solberg in \cite{GreenSolberg2002} and by Cibils, Marcos, Redondo and Solotar in \cite{CibilsMarcosRedondoSolotar2003}. The work of Happel's long exact sequence has also been generalized by de la Pe\~na and Xi to the case of algebras with heredity ideals. All these works have been extended to algebras with stratifying ideals in \cite{KoenigNagase2009}. Furthermore, this was extended even further, by Han \cite{Han2014} and Hermann \cite{Hermann2016}, by studying recollements of the module and derived categories respectively. The case for subalgebras remains more elusive.

In a seminal paper \cite{Hochschild1956}, Hochschild defines relative Hochschild (co)homology.  The notion of relative Hochschild cohomology accounts for the additional structure introduced by the presence of a subalgebra.  Gerstenhaber and Schack used relative Hochschild cohomology in the context of deformation theory, see \cite{GerstenhaberSchack1986,GerstenhaberSchack1988}. Although, this cohomology theory is seldomly employed due to its limited direct connection with Hochschild cohomology, recent developments \cite{CLMS,CLSS, CibilsLanzilottaMarcosSolotar2024}, in the context of Han's conjecture and adding arrows to a quiver, have sparked a growing interest in the study of relative Hochschild (co)homology.  

Let $A$ be a finite-dimensional algebra and $B$ a subalgebra of $A$. The first goal of this paper is to study the relation between the Lie algebra structure of $\HH^1(A|B)$, the first relative Hochschild cohomology of $A$, and $\HH^1(A)$, the first Hochschild cohomology of $A$. More precisely, in Lemma \ref{lemma:emb} we provide an embedding $\HH^1(A|B)\to \HH^1(A)$ which allows us to transfer Lie theoretic properties of $\HH^1
(A)$, such as the solvability studied in \cite{RSS,LinRyD,ER}, to $\HH^1
(A|B)$. However, it is quite common for $\HH^1(A)$ to be non-solvable while $\HH^1(A|B)$ is solvable; see Example \ref{Ex:notsolv}. Our first main theorem provides a sufficient condition for the solvability of $\HH^1(A|B)$ in case where $A$ and $B$ are basic algebras:

\begin{theorema}[Theorem \ref{thm1}]\label{mainthm}
  Let $k$ be an algebraically closed field of characteristic zero. Let $A$ be a finite-dimensional basic algebra. Let $B$ be a basic algebra which is a subalgebra of $A$ such that the set of arrows of $Q_B$ is a subset of the arrows of $Q_A$ and such that $A$ and $B$ have the same semisimple part. If the complement of the quiver of $A$ by the arrows of the quiver of $B$ has no parallel arrows and at most one loop at each vertex, then $\HH^1(A|B)$ is a solvable Lie algebra. 
\end{theorema}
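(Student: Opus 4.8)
The plan is to realise $\HH^1(A|B)$ concretely as a quotient of a Lie algebra of derivations and to show that this Lie algebra of derivations is \emph{itself} already solvable. By Gabriel's theorem we may write $A\cong kQ_A/I$ with $I$ admissible. Since $A$ and $B$ have the same semisimple part and $(Q_B)_1\subseteq (Q_A)_1$, we fix compatible presentations (part of the standing set-up): a semisimple subalgebra $S\subseteq B\subseteq A$ with $A=S\oplus\rad A$ as $S$-bimodules, and arrow lifts $\{x_\alpha:\alpha\in (Q_A)_1\}\subseteq\rad A$ whose classes form a basis of $\rad A/\rad^2 A$ and with $B$ generated by $S$ and $\{x_\beta:\beta\in (Q_B)_1\}$. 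Via the standard identification $\HH^1(A|B)\cong\Der_B(A)/\{[c,-]:c\in C_A(B)\}$, where $\Der_B(A)=\{d\in\Der_k(A):d|_B=0\}$ and $C_A(B)$ is the centraliser (a $B$-bimodule derivation is the same thing as a $k$-derivation vanishing on $B$, since $1\in B$), it suffices to prove that the Lie algebra $\mathfrak g:=\Der_B(A)$ is solvable, the stated claim then following by passing to the quotient; alternatively one may combine this with the embedding of Lemma \ref{lemma:emb}.

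Recall that over a field of characteristic zero a derivation vanishing on a semisimple complement of the radical preserves the radical filtration, so each $d\in\mathfrak g$ satisfies $d(\rad^n A)\subseteq\rad^n A$ for all $n$; in particular $d(x_\alpha)\in e_{s(\alpha)}\,\rad A\,e_{t(\alpha)}$, and since $\rad A=\langle x_\alpha\mid\alpha\in (Q_A)_1\rangle\oplus\rad^2 A$ we may speak of the \emph{length-one part} of $d(x_\alpha)$. Let $\bar Q=((Q_A)_0,(Q_A)_1\setminus (Q_B)_1)$ be the complement quiver. By hypothesis $\alpha$ is the only arrow of $\bar Q$ parallel to itself, so for $\alpha\in\bar Q_1$ the length-one part of $d(x_\alpha)$ has the shape $\lambda_\alpha(d)\,x_\alpha+\sum_{\beta\in (Q_B)_1,\,\beta\parallel\alpha}\mu_{\alpha\beta}(d)\,x_\beta$. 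I would then examine the linear map $\Lambda\colon\mathfrak g\to k^{\bar Q_1}$, $d\mapsto(\lambda_\alpha(d))_\alpha$, and show that it is a homomorphism of Lie algebras onto an abelian Lie algebra: expanding $[d,d'](x_\alpha)$ and extracting the degree-one coefficient of $x_\alpha$, every term that could obstruct cancellation involves $d$ or $d'$ evaluated on an arrow parallel to $\alpha$, and such an arrow either lies in $Q_B$ — where $d,d'$ vanish — or equals $\alpha$, in which case the two remaining contributions cancel. This is exactly where the combinatorial hypothesis enters; note it is imposed only on $\bar Q$, because the vanishing of the derivations on $B$ already neutralises parallel arrows of $Q_A$ unless both of them lie in $\bar Q$ — this is why $\HH^1(A|B)$ can be solvable even when $\HH^1(A)$ is not.

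Set $\mathfrak g_+:=\ker\Lambda=\{d\in\mathfrak g: d(x_\alpha)\in\sum_{\beta\in (Q_B)_1,\,\beta\parallel\alpha}k\,x_\beta+\rad^2 A\ \ \forall\,\alpha\in\bar Q_1\}$ and $\mathfrak g_{++}:=\{d\in\mathfrak g: d(x_\alpha)\in\rad^2 A\ \ \forall\,\alpha\in\bar Q_1\}$. Using $d(\rad^n A)\subseteq\rad^n A$ and $d|_B=0$ one checks that $\mathfrak g_+$ and $\mathfrak g_{++}$ are ideals of $\mathfrak g$ and that $[\mathfrak g_+,\mathfrak g_+]\subseteq\mathfrak g_{++}$: for $d,d'\in\mathfrak g_+$ we get $d\bigl(d'(x_\alpha)\bigr)\in d\bigl(\textstyle\sum k\,x_\beta\bigr)+d(\rad^2 A)\subseteq 0+\rad^2 A$, and symmetrically, so $[d,d'](x_\alpha)\in\rad^2 A$. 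Moreover $\mathfrak g_{++}$ is nilpotent: every $d\in\mathfrak g_{++}$ maps $A$ into $\rad^2 A$ and $\rad^n A$ into $\rad^{n+1}A$, so any composite of $N$ elements of $\mathfrak g_{++}$ vanishes once $\rad^N A=0$, forcing the lower central series of $\mathfrak g_{++}$ to terminate. Putting this together, $\mathfrak g/\mathfrak g_+$ is abelian, hence $[\mathfrak g,\mathfrak g]\subseteq\mathfrak g_+$ and $[[\mathfrak g,\mathfrak g],[\mathfrak g,\mathfrak g]]\subseteq[\mathfrak g_+,\mathfrak g_+]\subseteq\mathfrak g_{++}$ is nilpotent, so the derived series of $\mathfrak g$ reaches $0$; thus $\mathfrak g$, and therefore $\HH^1(A|B)$, is solvable.

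The step I expect to be the main obstacle is the proof that $\Lambda$ is a Lie-algebra homomorphism: one has to keep careful track, inside the expansion of a bracket applied to an arrow $\alpha\in\bar Q_1$, of precisely which arrows parallel to $\alpha$ appear and of the fact that those in $Q_B$ are killed — this is the crux, and the only place where the absence of parallel arrows (and of two loops at a vertex) in $\bar Q$ is genuinely used. The remaining ingredients — setting up compatible quiver presentations for the pair $(A,B)$, and invoking the characteristic-zero fact that such derivations preserve the radical filtration (where the hypothesis on $\chr k$ enters) — are routine and run parallel to the treatment of the absolute case in \cite{RSS,LinRyD,ER}.
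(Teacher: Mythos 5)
Your proposal is correct, but it takes a genuinely different route from the paper. The paper's proof first embeds $\HH^1(A|B)$ into $\HH^1(A)$ (Lemma \ref{lemma:emb}), uses the characteristic-zero fact that derivations preserve $J(A)$ to get a Lie algebra map $\varphi_{A|B}\colon\HH^1(A|B)\to\HH^1(A/J(A)^2\,|\,B/J(B)^2)$, quotes \cite[Proposition 2.9]{LinRyD} for the nilpotency of $\mathrm{Ker}(\varphi_{A|B})$, and feeds the image into the separately proven radical-square-zero analysis (Proposition \ref{prop:HH1rad2ab}, which rests on the Kronecker-block decomposition of $\mathrm{Ker}(\delta^1_{A|B})$ behind Theorem \ref{radsqrzero}); solvability then follows from closure of solvability under extensions. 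You instead work directly with $\mathfrak g=\Der_{B^e}(A)$ (legitimate via Remark \ref{relativeHH1} and Lemma \ref{restB}, since a quotient of a solvable Lie algebra is solvable) and prove the stronger statement that this whole Lie algebra is solvable: the functionals $\lambda_\alpha$ recording the $x_\alpha$-coefficient of $d(x_\alpha)$ for $\alpha$ in the complement quiver $Q_C$ kill all brackets --- your cancellation is right, and this is exactly where the hypothesis on $Q_C$ enters, the parallel arrows lying in $Q_B$ being harmless because the derivations vanish on their lifts --- brackets of elements of $\ker\Lambda$ push the complement arrows into $J(A)^2$, and derivations doing that shift the radical filtration, hence form a nilpotent Lie algebra, so $\mathfrak g^{(2)}$ is already nilpotent. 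What your route buys is self-containedness (no appeal to \cite{LinRyD} nor to the radical-square-zero structure theory) and an explicit bound on the derived length; what the paper's route buys is reuse of Theorem \ref{radsqrzero}, which it needs anyway and which gives finer structural information than mere solvability. One point you should spell out, although the paper's proof leans on the same implicit identification when it forms $\HH^1(A/J(A)^2|B/J(B)^2)$: the existence of your compatible arrow lifts with $x_\beta\in B$ for $\beta\in(Q_B)_1$ uses $J(B)=B\cap J(A)$ (Wedderburn--Malcev together with the equal-semisimple-part hypothesis) and the fact that the arrows of $Q_B$ remain linearly independent modulo $J(A)^2$, which is how the hypothesis that $(Q_B)_1$ is a subset of $(Q_A)_1$ is to be read.
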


We also establish an analogue in positive characteristic, but then we have to require that the quiver of $A$ does not have any loops, see Theorem \ref{thm2}.

One essential ingredient for the proof of Theorem \ref{mainthm} is the description of the Lie algebra structure of the first relative Hochschild cohomology for radical square zero algebras. For this reason, we first focus on combinatorial methods to compute the first relative Hochschild cohomology for monomial algebras using Strametz's work \cite{Str}, see Lemma \ref{relcomb}, and then we apply these tools to radical square zero basic algebras, see Theorem \ref{radsqrzero}.

The next goal of this work is to obtain a decomposition of $\HH^1(A)$ in terms of $\HH^1(B)$ and $\HH^1(A|B)$ for some family of algebras. As a consequence, we show how to recover the solvability of a certain Lie subalgebra of $\HH^1(A)$ from both $\HH^1(B)$ and $\HH^1(A|B)$. For this reason we focus on a family of quasi-hereditary algebras.

Quasi-hereditary algebras were first defined by Scott in \cite{Scott1987}, motivated by representation theory of algebraic groups, whose module categories are standard examples of highest weight categories, introduced in \cite{ClineParshallScott1988}. Classical examples of quasi-hereditary algebras are hereditary algebras, algebras of global dimension two, blocks of category $\mathcal{O}$ and Schur algebras. Koenig defined exact Borel subalgebras of quasi-hereditary algebras in \cite{Koenig1995I}. It was shown by Koenig, K\"{u}lshammer and Ovsienko that every quasi-hereditary algebra is Morita equivalent to a quasi-hereditary algebra with a regular exact Borel subalgebra \cite{KKO}. In addition, Rodriguez Rasmussen recently proved that the regular exact Borel subalgebra is unique up to an inner automorphism of the quasi-hereditary algebra, see \cite{Rasmussen}. One example of combinatorically well-behaved quasi-hereditary algebras with exact Borel subalgebras are the dual extension algebras, first defined by Xi in \cite{Xi1994}.  The dimension and the algebra structure of Hochschild cohomology of dual extensions has been studied in \cite[Example 4]{dPX} and in \cite[\S 2.4]{XZ}, however not its Lie structure. We study the first Hochschild cohomology of dual extensions of monomial directed algebras. Given two directed monomial algebras $A$ and $B$ we obtain an exact sequence of Lie algebras 
\begin{equation*}
  \begin{tikzcd}
    0 \ar{r} & \mathcal{J}/\mathcal{I} \ar{r} & \HH^1(\Lambda) \ar{r} & \HH^1(B) \ar{r} & 0 
  \end{tikzcd}
\end{equation*}
where $\Lambda = \Lambda(B,A^{\op})$ is the dual extension of $A$ and $B$, in which $B$ is the exact Borel subalgebra and $\mathcal{J}/\mathcal{I}$ is a Lie algebra related to $\HH^1(\Lambda|B)$, see Theorem \ref{Dualmain} for further details.  In general $\mathcal{J}/\mathcal{I}$ is not isomorphic to $\HH^1(\Lambda|B)$, see Example \ref{ex:notcong}. However, if for example the quiver of $B$ is of type $A_n$ then $\mathcal{J}/\mathcal{I}\cong \HH^1(\Lambda|B)$, where $\Lambda=\Lambda(B,B^{\op})$.  For further details see Example \ref{ex:string}.

For a monomial algebra $\Lambda$ and a monomial subalgebra $B$ we have that $\HH^1(\Lambda)$ and $\HH^1(\Lambda|B)$ are graded Lie algebras, where the grading is induced from the lengths of the paths. We denote the degree one component of $\HH^1(\Lambda)$ and $\HH^1(\Lambda|B)$ by $\HH^1(\Lambda)_{|_1}$ and $\HH^1(\Lambda|B)_{|_1}$, respectively. In degree 1 we obtain from the above short exact sequence the following result:

\begin{theorema}[Theorem \ref{thm:deg-one}]\label{mainthm1.1}
  Let $A$ and  $B$ be directed algebras and let $\Lambda=\Lambda(B,A^{\op})$. Then we have the following isomorphism as Lie algebras
  \[
    \HH^1(\Lambda)_{|_1}\cong \HH^1(B)_{|_1}\oplus \HH^1(\Lambda|B)_{|_1}.
  \]
\end{theorema}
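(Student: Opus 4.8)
The plan is to descend the short exact sequence of Lie algebras from Theorem \ref{Dualmain} to its degree-one component and then exhibit a Lie-algebra splitting. Since $\Lambda = \Lambda(B,A^{\op})$ is a monomial algebra and $B$ a monomial subalgebra, both $\HH^1(\Lambda)$ and $\HH^1(\Lambda|B)$ carry the length grading, and the maps in the sequence
\begin{equation*}
  \begin{tikzcd}
    0 \ar{r} & \mathcal{J}/\mathcal{I} \ar{r} & \HH^1(\Lambda) \ar{r} & \HH^1(B) \ar{r} & 0
  \end{tikzcd}
\end{equation*}
are homogeneous of degree $0$: the inclusion is induced by the inclusion of quivers, and the surjection $\HH^1(\Lambda)\to\HH^1(B)$ is induced by restricting a derivation to the arrows of $Q_B$, both of which preserve path length. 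Hence restricting to degree $1$ yields an exact sequence $0 \to (\mathcal{J}/\mathcal{I})_{|_1} \to \HH^1(\Lambda)_{|_1} \to \HH^1(B)_{|_1} \to 0$ of Lie algebras, and I would first check (using the combinatorial description of Lemma \ref{relcomb} and the analysis underlying Theorem \ref{Dualmain}) that in degree $1$ the obstruction distinguishing $\mathcal{J}/\mathcal{I}$ from $\HH^1(\Lambda|B)$ vanishes, so that $(\mathcal{J}/\mathcal{I})_{|_1}\cong \HH^1(\Lambda|B)_{|_1}$. The point is that the discrepancy between $\mathcal{J}/\mathcal{I}$ and $\HH^1(\Lambda|B)$ lives in higher length components coming from relations of $\Lambda$ that mix arrows of $A^{\op}$ with arrows of $B$, and a directed monomial algebra has no relations supported purely in length one; for directed $A,B$ this forces the length-one parts to agree.

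Next I would construct an explicit Lie-algebra section $s\colon \HH^1(B)_{|_1}\to \HH^1(\Lambda)_{|_1}$. A degree-one class in $\HH^1(B)$ is represented by a derivation of $B$ determined by a $k$-linear assignment $\beta\mapsto \sum c_{\beta,\gamma}\,\gamma$ sending each arrow $\beta$ of $Q_B$ to a linear combination of arrows $\gamma$ of $Q_B$ (parallel to $\beta$), respecting the relations of $B$. Define $s$ by sending such a derivation to the derivation of $\Lambda$ that acts by the same formula on arrows of $Q_B$ and by $0$ on the arrows coming from $A^{\op}$. Because $A$ and $B$ are directed and $\Lambda$ is a dual extension, the relations of $\Lambda$ are generated by the relations of $B$, the relations of $A^{\op}$, and the commutativity relations $\alpha\beta = \beta\alpha$ for $\alpha$ an arrow of $A^{\op}$ and $\beta$ an arrow of $B$; one checks directly that the extended map annihilates all of these (the $B$-relations by hypothesis, the $A^{\op}$-relations and mixed relations because $s$ vanishes on $A^{\op}$-arrows and sends $B$-arrows to $B$-arrows), so $s$ is well defined on $\HH^1(\Lambda)_{|_1}$, is a section of the projection, and is a Lie homomorphism since it is induced by a split injection of the relevant quiver data. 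With the section in hand, $\HH^1(\Lambda)_{|_1}\cong (\mathcal{J}/\mathcal{I})_{|_1}\rtimes \HH^1(B)_{|_1}$ as Lie algebras.

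Finally I would upgrade the semidirect product to a direct product by showing the action is trivial in degree one, i.e. $[s(\HH^1(B)_{|_1}),\,(\mathcal{J}/\mathcal{I})_{|_1}] = 0$. The bracket in $\HH^1$ of a monomial algebra is the commutator of derivations, computed combinatorially on parallel paths (Lemma \ref{relcomb}); a length-one class in the image of $s$ is supported on $B$-arrows valued in $B$-arrows, while a length-one class in $(\mathcal{J}/\mathcal{I})_{|_1}\cong\HH^1(\Lambda|B)_{|_1}$ is, by the relative combinatorics, supported on $A^{\op}$-arrows valued in $A^{\op}$-arrows (the arrows not in $Q_B$). Since $A,B$ are directed there are no paths of length $\le 1$ from a $B$-arrow to an $A^{\op}$-arrow or vice versa, so the two derivations have disjoint "support and target" and their commutator vanishes identically. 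This gives the desired Lie-algebra isomorphism $\HH^1(\Lambda)_{|_1}\cong \HH^1(B)_{|_1}\oplus \HH^1(\Lambda|B)_{|_1}$.

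I expect the main obstacle to be the second step: verifying that the section $s$ is genuinely well defined, i.e. that extending a derivation of $B$ by zero on the $A^{\op}$-arrows respects \emph{all} relations of the dual extension $\Lambda$ — this requires a careful bookkeeping of how the mixed commutativity relations interact with the derivation, and it is precisely here that the directedness hypothesis on both $A$ and $B$ (ensuring no parallel paths and no length-one relations) is used in an essential way.
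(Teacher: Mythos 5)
Your overall skeleton coincides with the paper's: restrict the sequence of Theorem~\ref{Dualmain} to the length-one component, identify the degree-one kernel with $\HH^1(\Lambda|B)_{|_1}$, and use directedness (no arrow of $Q_B$ is parallel to an arrow of $Q_{A^{\op}}$) to kill the bracket between the two pieces; the paper phrases this last point via Lemma~\ref{lem:vectdec}, namely that $\mathrm{Ker}(\delta^1_{\Lambda})_{|_1}=\mathrm{Ker}(\delta^1_{A^{\op}})_{|_1}\oplus\mathrm{Ker}(\delta^1_{B})_{|_1}$ is a direct \emph{product} of Lie algebras. However, your second step has a genuine gap: the section $s$ defined by ``pick a representative derivation of $B$ and extend it by zero on the $A^{\op}$-arrows'' is not well defined on cohomology classes, because extension by zero does not carry $\mathrm{Im}(\delta^0_B)_{|_1}$ into $\mathrm{Im}(\delta^0_\Lambda)_{|_1}$. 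Concretely, in the paper's running example (Examples~\ref{Ex:notsolv} and~\ref{ex:runningExampleSection4}, with $B$ the Kronecker algebra on $\alpha_1,\alpha_2$ and one extra arrow $\beta$), the coboundary $\delta^0_B(e_1\parall e_1)=\alpha_1\parall\alpha_1+\alpha_2\parall\alpha_2$ represents $0$ in $\HH^1(B)_{|_1}$, but its extension by zero is not a multiple of $\delta^0_\Lambda(e_1\parall e_1)=\alpha_1\parall\alpha_1+\alpha_2\parall\alpha_2-\beta\parall\beta$, which spans $\mathrm{Im}(\delta^0_\Lambda)_{|_1}$; its class in $\HH^1(\Lambda)_{|_1}$ is the nonzero class of $\beta\parall\beta$. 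So the two representatives $0$ and $\delta^0_B(e_1\parall e_1)$ of the same class are sent to different classes, and your $s$ is not a map out of $\HH^1(B)_{|_1}$ at all. The obstacle you flagged (compatibility with the relations of $\Lambda$) is the harmless one; also note that the mixed relations of $\Lambda(B,A^{\op})$ are monomial zero relations $\alpha\beta'$, not commutativity relations $\alpha\beta=\beta\alpha$ --- your verification still goes through with the correct relations, but otherwise $\Lambda$ would not be monomial and the graded combinatorial framework you invoke would not apply.

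The repair is exactly what the paper does: split at the cochain level rather than the cohomology level. Extension by zero is a genuine Lie algebra splitting of the restriction $\mathrm{Ker}(\delta^1_{\Lambda})_{|_1}\to\mathrm{Ker}(\delta^1_{B})_{|_1}$, realizing the direct product $\mathrm{Ker}(\delta^1_{A^{\op}})_{|_1}\times\mathrm{Ker}(\delta^1_{B})_{|_1}$. The coboundaries $\mathrm{Im}(\delta^0_\Lambda)_{|_1}$ then sit diagonally across the two factors, and one uses Remark~\ref{rmk:iso} (the projection to the $B$-component gives $\mathrm{Im}(\delta^0_\Lambda)_{|_1}\cong\mathrm{Im}(\delta^0_B)_{|_1}$) together with $\HH^1(\Lambda|B)_{|_1}=\mathrm{Ker}(\delta^1_{\Lambda|B})_{|_1}=\mathrm{Ker}(\delta^1_{A^{\op}})_{|_1}$ to pass to the quotient and obtain the direct sum; in the example above this is precisely what identifies the class of $\alpha_1\parall\alpha_1+\alpha_2\parall\alpha_2$ with that of $\beta\parall\beta$ inside $\HH^1(\Lambda)_{|_1}$. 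Your first and third steps are fine and agree with the paper; it is the missing bookkeeping of the inner derivations that must be added.
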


The last main result in this article is related with the notion of fundamental group of an algebra $A$.  The fundamental group $\pi_1(Q,I)$ of a bound quiver $(Q,I)$ was introduced in \cite{MD2}. 
It was shown in \cite{MD2, DepeSao} that there is an injective map $\theta_{\nu}$, which depends on a presentation $\nu$, from the dual fundamental group $\mathrm{Hom}(\pi_1(Q,I), k)$ into the first Hochschild cohomology. In \cite{Br} the notion of fundamental group was generalised to any finite-dimensional algebra by allowing the ideal $I$ to be non-admissible and the idempotents to be non-primitive. In the same paper, it was shown that the image of $\theta_{\nu}$ is a torus and every maximal torus is coming from a dual of some fundamental group. As a result, one can define a non-negative integer, called $\pi_1$-rank of $A$, which is invariant under derived equivalences and for self-injective algebras under stable equivalences of Morita type. In characteristic zero, this invariant is equal to the rank of a maximal torus of the identity
component of the group of outer automorphisms of a finite-dimensional algebra. It is worth noting that the rank of a maximal torus has been used to show that the class of Brauer graph algebras is closed under derived equivalence \cite{AZ} and to show the validity  of Brou\'e's abelian defect group conjecture for non-nilpotent blocks with defect group $C_3 \times C_3$ and one isomorphism class of simple modules \cite{Kessar}.

The relative notion of fundamental group provides a tool to compute lower bounds for $\pi_1$-rank. More precisely, for an algebra $A$ and a subalgebra $B$ we introduce the notion of a fundamental group in the relative setting and we denote it by $\pi_1(Q_\A,I_\A)/N_B$, where quotienting by $N_B$ roughly corresponds to locally contracting all closed paths in the connected components of $B$. In addition, we construct a relative version of the map $\theta_{\nu}$:

\begin{theorema}[Theorem \ref{thm:relativeMap}]
  \label{main2}
  We have an induced monomorphism
  $$
  \theta_{\nu}^{(\A | B)} : \mathrm{Hom}(\pi_1(Q_\A,I_\A)/N_B,k)\to \HH^1(\A|B).
  $$
  Furthermore, the following diagram is a pullback square 
  \begin{equation*}
    \begin{tikzcd}
      \Hom(\pi_1(Q_\A,I_\A),k) \ar{r}{\theta_{\nu}} & \HH^1(\A) \\
      \Hom(\pi_1(Q_\A,I_\A)/N_B,k) \ar{u}{\iota} \ar[swap]{r}{\theta_{\nu}^{(\A |B)}} & \HH^1(\A|B) \ar[swap]{u}{\iota_\HH}
    \end{tikzcd}
  \end{equation*}
\end{theorema}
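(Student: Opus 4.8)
The plan is to obtain $\theta_\nu^{(\A|B)}$ as the restriction of $\theta_\nu$ to a suitable subspace, and then to verify the pullback property by a direct computation with $(B,B)$-derivations, replacing the (generally unavailable) path-length grading of $\A$ by its radical filtration. Fix the presentation $\nu$ realizing $\A = kQ_\A/I_\A$ and choose a spanning tree $T$ of $Q_\A$ whose intersection $T_B := T\cap Q_B$ is a spanning forest of $Q_B$; this is possible since $Q_B$ is a subquiver of $Q_\A$ (extend a spanning forest of $Q_B$ to a spanning tree of $Q_\A$). Recall that a character $\chi\in\Hom(\pi_1(Q_\A,I_\A),k)$ is determined by the scalars $\chi(\alpha)\in k$ for the arrows $\alpha\notin T$ (with $\chi(\alpha):=0$ for $\alpha\in T$), that $\theta_\nu(\chi)$ is the class of the derivation $f_\chi$ of $\A$ with $f_\chi(e_i)=0$ and $f_\chi(\alpha)=\chi(\alpha)\alpha$ on arrows, and that $\theta_\nu$ is injective \cite{MD2,DepeSao,Br}. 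Using the description of $N_B$ as the normal subgroup generated by the classes of closed walks supported in the connected components of $Q_B$, together with the choice of $T$, one checks that the monomorphism $\iota\colon\Hom(\pi_1(Q_\A,I_\A)/N_B,k)\to\Hom(\pi_1(Q_\A,I_\A),k)$ has image $\{\chi : \chi(\beta)=0 \text{ for every arrow }\beta\text{ of }Q_B\}$.

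Next I would show that $\theta_\nu$ carries $\operatorname{im}(\iota)$ into $\operatorname{im}(\iota_\HH)$. If $\chi(\beta)=0$ for every arrow $\beta$ of $Q_B$, then $f_\chi$ vanishes on the algebra generators $e_i$ and $\beta\in(Q_B)_1$ of $B$, hence on all of $B$, so $f_\chi$ is a $(B,B)$-derivation; therefore $[f_\chi]\in\HH^1(\A)$ lies in $\operatorname{im}(\iota_\HH)$, which by Lemma \ref{lemma:emb} consists precisely of the classes admitting a $(B,B)$-derivation representative. Since $\iota_\HH$ is a monomorphism, there is a unique linear map $\theta_\nu^{(\A|B)}\colon\Hom(\pi_1(Q_\A,I_\A)/N_B,k)\to\HH^1(\A|B)$ with $\iota_\HH\circ\theta_\nu^{(\A|B)}=\theta_\nu\circ\iota$; via the Lie embedding $\iota_\HH$ its image lies in the abelian subalgebra $\operatorname{im}(\theta_\nu)$ of $\HH^1(\A)$, so $\theta_\nu^{(\A|B)}$ is a homomorphism of (abelian) Lie algebras, and by construction it makes the square commute. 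It is a monomorphism: if $\theta_\nu^{(\A|B)}(\bar\chi)=0$ then $\theta_\nu(\iota(\bar\chi))=\iota_\HH(0)=0$, so $\iota(\bar\chi)=0$ and hence $\bar\chi=0$, by injectivity of $\theta_\nu$ and of $\iota$.

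For the pullback property, observe that since $\iota_\HH$ is a monomorphism the fibre product $\Hom(\pi_1(Q_\A,I_\A),k)\times_{\HH^1(\A)}\HH^1(\A|B)$ embeds into $\Hom(\pi_1(Q_\A,I_\A),k)$ with image $\{\chi : \theta_\nu(\chi)\in\operatorname{im}(\iota_\HH)\}$, and the canonical map from $\Hom(\pi_1(Q_\A,I_\A)/N_B,k)$ into this fibre product is injective because $\iota$ is; hence it suffices to show that $\theta_\nu(\chi)\in\operatorname{im}(\iota_\HH)$ forces $\chi\in\operatorname{im}(\iota)$, i.e. $\chi(\beta)=0$ for every arrow $\beta$ of $Q_B$. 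So assume $[f_\chi]=[g]$ in $\HH^1(\A)$ with $g$ a $(B,B)$-derivation, i.e. $f_\chi-g=[a,-]$ for some $a\in\A$. Evaluating at each $e_i$ gives $[a,e_i]=(f_\chi-g)(e_i)=0$, so $a=\sum_i e_iae_i$ with $e_iae_i=\lambda_ie_i+r_i$, $\lambda_i\in k$ and $r_i\in\rad(e_i\A e_i)\subseteq\rad(\A)$ (here we use that $\A$ is basic). For an arrow $\beta\colon i\to j$ of $Q_B$ we have $g(\beta)=0$, so
\[
\chi(\beta)\,\beta \;=\; f_\chi(\beta) \;=\; [a,\beta] \;=\; (\lambda_j-\lambda_i)\,\beta + (r_j\beta-\beta r_i);
\]
as $r_j\beta,\beta r_i\in\rad^2(\A)$ while $\beta\notin\rad^2(\A)$, reduction modulo $\rad^2(\A)$ yields $\chi(\beta)=\lambda_j-\lambda_i$. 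For $\beta\in T_B$ this is $0$, so $i\mapsto\lambda_i$ is constant on each connected component of $Q_B$; hence $\lambda_{s(\beta)}=\lambda_{t(\beta)}$, and $\chi(\beta)=0$ for every arrow $\beta$ of $Q_B$, as required. This identifies the fibre product with $\Hom(\pi_1(Q_\A,I_\A)/N_B,k)$ and completes the argument.

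The one genuinely delicate point is the compatibility at the outset: $\pi_1(Q_\A,I_\A)$ and $\theta_\nu$ depend on the chosen presentation, and for $N_B$ to correspond exactly to killing the arrows of $Q_B$ one must fix the presentation together with a spanning tree restricting to $Q_B$ as above. Everything else is bookkeeping; the only real computation is the final display, where the absence of a path-length grading on $\A$ (as $I_\A$ need not be homogeneous) is handled via the radical filtration.
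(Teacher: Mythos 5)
Your construction of the induced map and its injectivity follows the same lines as the paper (restrict $\theta_{\nu}$ to the characters killing $N_B$, observe via Lemma~\ref{restB} that the corresponding diagonal derivations vanish on $B$, and lift uniquely through the monomorphism $\iota_\HH$); choosing a spanning tree extending a spanning forest of $Q_B$ is just a particular relative parade data, so this part is fine. For the pullback you take a genuinely different route: instead of the paper's argument, which fixes a test space $V$, uses Lemma~\ref{lemma:relativeParade} (the classes $\cl_{\gamma}(p)$ for $p$ a path in $B$ and varying relative parade data generate $N_B$) and the independence of $\theta_{\nu,\gamma}$ of the parade data, you reduce the universal property (correctly, since $\iota$ and $\iota_\HH$ are injective) to the statement that $\theta_{\nu}(\chi)\in\operatorname{Im}(\iota_\HH)$ forces $\chi(N_B)=0$, and prove this by writing $f_\chi-g=[a,-]$, normalizing $a=\sum_i e_iae_i$ with $e_iae_i=\lambda_ie_i+r_i$, and comparing coefficients modulo $J(A)^2$. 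This has the merit of confronting head-on the ambiguity of representatives up to inner derivations (a point the paper's proof treats rather tersely when it asserts $\theta_{\nu}(f(v))(p)=0$ for paths $p$ in $B$), and it is an attractive, concrete alternative.

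However, your key computation proves the theorem only under extra hypotheses that the paper does not impose. Section~\ref{relfund} works with presentations in the sense of \cite{Br}: the ideal $I_\A$ need not be admissible and the idempotents $e_1,\dots,e_n$ need not be primitive (see Definition~\ref{def:fundamentalGroup} and Example~\ref{ex:group}, where $A=kG$, the quiver has one vertex, and the arrows map to group elements, hence to units). Your argument needs $e_iAe_i$ to be local with residue field $k$ (``$A$ basic'', as you say) and needs the images of the arrows to lie in $J(A)\setminus J(A)^2$ so that $r_j\beta,\beta r_i\in J(A)^2$ and the coefficient comparison modulo $J(A)^2$ has any force; both fail in the paper's generality (for $kG$ the arrows are not even in the radical). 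The paper's proof avoids the radical entirely, which is exactly why it survives non-admissible presentations and non-primitive idempotents. So as written your proposal establishes the statement for basic algebras with admissible presentations, but not in the generality in which the theorem is stated; to close the gap you would either have to restrict the statement or replace the final display by an argument, such as the paper's use of Lemma~\ref{lemma:relativeParade} together with the parade-data independence of $\theta_{\nu,\gamma}$, that does not rely on the radical filtration.
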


For monomial algebras, we provide another, more computable, description of the contracted fundamental group, see Theorem \ref{relincfund1}. Furthermore, we show that for monomial algebras the $\pi_1$-rank decomposes in terms of subalgebra structures and certain `relative' data. More precisely, for a monomial algebra $A$ the $\pi_1$-rank of $A$ is equal the $\pi_1$-rank of a monomial subalgebra $B$ plus the dimension of the dual contracted fundamental group, see Proposition \ref{prop:dimension}.

\subsection*{Outline.} In Section \ref{background} we recall some basic background on relative homological algebra and (relative) Hochschild cohomology. In Section \ref{relhochmon} we provide a combinatorial description of the first relative Hochschild cohomology for monomial algebras using Strametz's methods \cite{Str}. In Section \ref{sec:embandsol} we prove Theorem \ref{mainthm}.  In Section \ref{dualext} we provide some applications for dual extension algebras and we prove Theorem \ref{mainthm1.1}. In Section \ref{relfund} we describe the contracted fundamental group and we prove Theorem \ref{main2}. 

\subsection*{Acknowledgements.} We would like to thank Julian K\"ulshammer for his support and for making some important suggestions. We would also like to thank Steffen Koenig for several important comments, including if the square in Theorem~C is a pullback. The second author participates in the INdAM group GNSAGA.

\section{Background}\label{background}

Let $k$ be an algebraically closed field and let $\A$ be a finite-dimensional $k$-algebra. Let $\A^e=\A\otimes \A^{\op}$ be the enveloping algebra of $\A$. 

\subsection*{Relative homological algebra} We recall the definitions within relative homological algebra as defined by Hochschild, see \cite{Hochschild1956}. Let $\A$ be a $k$-algebra and let $B \subseteq \A$ be a $k$-subalgebra. A \textit{short} $(\A | B)$\textit{-exact sequence} is a short exact sequence of $\A$-modules that splits over $B$
\begin{equation*}
  \begin{tikzcd}
    0 \ar{r} & X \ar{r}{f} & Y \ar{r}{g} \ar[dashed, bend left]{l} & Z \ar{r} \ar[dashed, bend left]{l} & 0.
  \end{tikzcd}
\end{equation*}
When the choice of algebra and subalgebra is clear, we will refer to short $(\A | B)$-exact sequences as being \textit{relative short exact sequences}. Similarly, we say that an epimorphism is \textit{relative split}, if it splits over the subalgebra. We say that a complex of $\A$-modules is \textit{relative exact} if the complex is exact and the short exact sequences which make up the complex are relative exact, i.e. the short exact sequences of the form 
\begin{equation*}
  \begin{tikzcd}
    0 \ar{r}{} & \mathrm{Ker}(d_n) \ar[hook]{r}{} & M_n \ar{r}{d_n} & \mathrm{Ker}(d_{n+1}) \ar{r} & 0 
  \end{tikzcd}
\end{equation*}
are relative short exact.

An $\A$-module $P$ is called \textit{relative projective} if for every relative split epimorphism $g : Y \rightarrow Z$ and every $\A$-module map $\varphi : P \rightarrow Z$ there exists an $\A$-module map $\psi : P \rightarrow Y$ such that $\varphi = g \circ \psi$. 

\begin{proposition}
  Let $M$ be a $B$-module. Then $A \otimes_B M$ is a relative projective module. Conversely, if 
  $P$ is a relative projective module, then there exists a $B$-module $M$ such that $P$ is a summand of $A \otimes_B M$. 
\end{proposition}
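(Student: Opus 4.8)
The statement has two parts. For the first part, I want to show that for a $B$-module $M$, the induced $A$-module $A \otimes_B M$ is relative projective. The key idea is that the defining lifting property against relative split epimorphisms can be checked by adjunction. Given a relative split epimorphism $g : Y \to Z$ of $A$-modules and an $A$-module map $\varphi : A \otimes_B M \to Z$, by the tensor–hom adjunction (restriction of scalars along $B \subseteq A$ is right adjoint to $A \otimes_B -$) this corresponds to a $B$-module map $\tilde\varphi : M \to \operatorname{Res}_B Z$. Since $g$ splits over $B$, there is a $B$-linear section $s : \operatorname{Res}_B Z \to \operatorname{Res}_B Y$ with $g \circ s = \mathrm{id}$. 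Then $s \circ \tilde\varphi : M \to \operatorname{Res}_B Y$ is $B$-linear, and adjointing back gives an $A$-module map $\psi : A \otimes_B M \to Y$. The naturality of the adjunction (applied to the $A$-module map $g$) ensures $g \circ \psi = \varphi$, which is exactly the required lift. This is essentially a formal argument; the only thing to be careful about is to track the adjunction isomorphisms and confirm naturality in the right variable.

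For the converse, suppose $P$ is relative projective. I would take the canonical surjection $\pi : A \otimes_B \operatorname{Res}_B P \to P$ given by multiplication, $a \otimes p \mapsto ap$. First I must check this is a relative split epimorphism: it is visibly an $A$-module epimorphism (it hits $1 \otimes p \mapsto p$), and it splits as a map of $B$-modules via $p \mapsto 1 \otimes p$, which is $B$-linear because $1 \otimes bp = b \otimes p = b(1 \otimes p)$ in $A \otimes_B \operatorname{Res}_B P$. Hence $\pi$ is a relative split epimorphism onto $P$. Now apply the relative projectivity of $P$ to the identity map $\mathrm{id}_P : P \to P$: there is an $A$-module map $\psi : P \to A \otimes_B \operatorname{Res}_B P$ with $\pi \circ \psi = \mathrm{id}_P$. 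Thus $P$ is a direct summand of $A \otimes_B \operatorname{Res}_B P$, and taking $M = \operatorname{Res}_B P$ finishes the argument.

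**Main obstacle.** There is no deep obstacle here; both directions are soft categorical arguments. The point requiring the most care is the first direction: one must verify that the bijection
\[
  \Hom_A(A \otimes_B M, N) \cong \Hom_B(M, \operatorname{Res}_B N)
\]
is natural in $N$ with respect to \emph{$A$-module} maps $N \to N'$ (here $N = Z$, $N' = Y$, and the map is any $A$-linear lift of... rather, $g$ itself goes $Y \to Z$, so I apply naturality to $g : Y \to Z$ read as $N' \to N$), so that producing the section $s$ on the $B$-side transports correctly to a genuine lift on the $A$-side. I would spell this out by writing $\psi$ explicitly as $\psi(a \otimes m) = a \cdot s(\tilde\varphi(m))$ and checking directly that it is well-defined over $\otimes_B$, $A$-linear, and satisfies $g\psi = \varphi$ — this makes the naturality concrete and avoids any appeal to a black-box adjunction. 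The converse direction needs only that $A \otimes_B \operatorname{Res}_B P \twoheadrightarrow P$ is relatively split, which is immediate.
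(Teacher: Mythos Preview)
Your proof is correct and is exactly the standard argument; the paper itself does not give a proof but simply refers to Lemma~2 in Hochschild's original paper \cite{Hochschild1956}, whose argument is the one you have written out.
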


For a proof, see Lemma 2 in \cite{Hochschild1956}.

A \textit{relative projective resolution} of an $\A$-module $M$ is a relative exact complex 
\begin{equation*}
  \begin{tikzcd}
    ... \ar{r} & P_2 \ar{r}{d_2} & P_1 \ar{r}{d_1} & P_0 \ar{r}{\varepsilon} & M \ar{r} & 0 
  \end{tikzcd}
\end{equation*}
such that $P_i$ is relative projective for all $i \geq 0$. Every module has a relative projective resolution by the following process. Let $M$ be an $A$-module, then we have a relative split epimorphism
$$
\varphi : A \otimes_B M \rightarrow M
$$
where the relative splitting is given by the $B$-module map $m \mapsto 1 \otimes_B m$. Let $K = \mathrm{Ker}(\varphi)$, then similarly we have a relative split epimorphism onto $K$. Thus, by pasting all these together, we get a relative exact resolution of $M$. Akin to the absolute case, we define the \textit{relative Ext functor} to be $\Ext_{\A|B}^n(M,N) = H^{n}(\Hom_{\A}(P_{\sbu}, N))$, where $P_{\sbu}$ is a relative projective resolution of $M$. This is independent of the choice of relative projective resolution, see Section 2 in \cite{Hochschild1956}. 

\subsection*{Relative Hochschild cohomology} The $n$-th Hochschild cohomology of $\A$, denoted by $\HH^n(\A)$, is defined as $\mathrm{Ext}^n_{\A^e}(\A,\A)$. Let $B \subseteq \A$ be a $k$-subalgebra, and let $X$ be an $\A$-bimodule.  The $n$-th relative Hochschild cohomology is defined as
\[ 
  \HH^n(\A|B,X) = \mathrm{Ext}^n_{\A^e| B^e}(\A,X)
\]
for $n \geq 0$. In order to interpret and handle these groups similarly to the non-relative setting, one uses the relative bar resolution, see Section 3 in \cite{Hochschild1956}.

\begin{proposition}{}
  The relative Hochschild cohomology $\HH^*(\A|B,M)$ is the cohomology
  of the complex of cochains 

  \begin{tikzcd}
    0 \arrow[r] &  \mathrm{Hom}_{B^e}(B,M) \arrow[r, "\delta^{0}"] & \mathrm{Hom}_{B^e}(\A,M) \arrow[r, "\delta^{1}"] & \mathrm{Hom}_{B^e}(\A \otimes_B \A,M) \arrow[r, "\delta^{2}"] & \cdots
  \end{tikzcd}	

  where for $n \geq 1$
  \begin{equation*}
    \begin{split}
      \delta(f)(x_1\otimes \dots \otimes x_n)&=x_1f(x_2\otimes \dots \otimes x_n)+
      \sum_{i=1}^{n-1} (-1)^i f(x_1\otimes \dots \otimes x_ix_{i+1}\otimes \dots \otimes x_n)\\
      +&(-1)^{n}f(x_1\otimes \dots \otimes x_{n-1})x_n 
    \end{split}
  \end{equation*}
  for $n=0$ we have: $\delta(f)(x)=xf(1)-f(1)x$.
\end{proposition}

\begin{remark}\label{relativeHH1}
  By the above proposition one has that $\HH^0(\A |B) \cong \text{Z}(\A)$ and one can view $\HH^1(\A|B,\A)$ as the set of $k$-derivations which are $B^e$-module homomorphisms quotient by the set of inner derivations which are $B^e$-modules homomorphisms. In other words, 
  \[\HH^1(\A|B,\A)\cong \mathrm{Der}_{B^e}(\A)/\mathrm{Inn}_{B^e}(\A) 
  \]
  where $\mathrm{Der}_{B^e}(\A):=\mathrm{Der}_k(\A)\cap \Hom_{B^e}(\A, \A)$ and $\mathrm{Inn}_{B^e}(\A):=\mathrm{Inn}_k(\A)\cap \Hom_{B^e}(\A, \A)$. Any relative inner derivation must therefore be given by an element that commutes with all elements in $B$. 
\end{remark}

\begin{lemma} {\rm\cite[Lemma 3.2]{CLSS}}\label{restB}
  Let $f:\A\to X$ be a derivation. The map $f$ is a $B^e$-module
  morphism if and only if $f|_B=0$
\end{lemma}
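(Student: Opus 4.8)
The plan is to reduce the statement to the Leibniz rule $f(xy)=xf(y)+f(x)y$ together with the standing convention that the subalgebra $B$ contains the identity of $\A$ (which holds in all cases relevant here). First I would unwind the hypothesis: restriction of scalars along $B\hookrightarrow\A$ makes $\A$ and $X$ into $B$-bimodules, i.e.\ $B^e$-modules, so that ``$f$ is a $B^e$-module morphism'' means precisely $f(bab')=b\,f(a)\,b'$ for all $a\in\A$ and all $b,b'\in B$. I would also record that a derivation always annihilates the unit, since $f(1)=f(1\cdot1)=f(1)+f(1)$ gives $f(1)=0$.

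For the implication ``$f|_B=0\Rightarrow f$ is $B^e$-linear'' I would apply the Leibniz rule twice. For $a\in\A$ and $b,b'\in B$ it gives $f(bab')=f(b)(ab')+b\,f(ab')=b\,f(ab')$ because $f(b)=0$, and $f(ab')=f(a)b'+a\,f(b')=f(a)b'$ because $f(b')=0$; combining these yields $f(bab')=b\,f(a)\,b'$, which is the desired identity. For the converse, assuming $f$ is $B^e$-linear, I would specialise the linearity identity to the middle argument $1_\A$: since $1_\A=1_B\in B$, for any $b\in B$ we obtain $f(b)=f(b\cdot 1_\A\cdot 1_\A)=b\,f(1_\A)\,1_\A=b\,f(1)=0$, hence $f|_B=0$.

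I do not anticipate any genuine obstacle, as the argument is entirely formal. The only points requiring care are the correct identification of the $B^e$-module structures on $\A$ and $X$ by restriction of scalars, and the use of $1_\A\in B$; if one only knows that $B$ is unital with its own identity $e_B\neq 1_\A$, one first notes that $B^e$-linearity forces $f(e_B)=e_B f(e_B) e_B$, which together with the Leibniz identity $f(e_B)=e_B f(e_B)+f(e_B)e_B$ still forces $f(e_B)=0$, and then $f(b)=f(b\,e_B\,e_B)=b\,f(e_B)\,e_B=0$ for every $b\in B$.
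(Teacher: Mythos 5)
Your proof is correct; the paper itself gives no argument for this lemma but simply cites \cite[Lemma 3.2]{CLSS}, and your two applications of the Leibniz rule together with $f(1)=0$ (and the evaluation at the unit for the converse) are exactly the standard formal argument behind that citation. The extra remark handling a possible idempotent $e_B\neq 1_\A$ is a harmless bonus, since in the paper's setting $B$ contains $E\cong kQ_0$ and hence the unit of $\A$.
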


\begin{lemma}
  \label{lem:subideal}
  The space $\Der_{B^e}(A)$ is a Lie subalgebra of $\Der(A)$ and $\Inn_{B^e}(A)$ is a Lie ideal of $\Der_{B^e}(A)$.
\end{lemma}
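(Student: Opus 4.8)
The plan is to verify each claim by a direct computation with derivations, using Leibniz's rule together with the characterisation of $\Der_{B^e}(A)$ provided by \Cref{restB}, namely that a derivation $f$ lies in $\Der_{B^e}(A)$ precisely when $f|_B = 0$. For the first assertion, it is standard that $\Der(A)$ is a Lie algebra under the commutator bracket $[f,g] = f\circ g - g\circ f$; so it suffices to show that $\Der_{B^e}(A)$ is closed under this bracket. Given $f,g \in \Der_{B^e}(A)$, first recall $[f,g]$ is again a derivation (this is the classical fact that the commutator of two derivations is a derivation). Then I would simply observe that for $b \in B$ we have $[f,g](b) = f(g(b)) - g(f(b)) = f(0) - g(0) = 0$, since $f|_B = g|_B = 0$, and hence $[f,g]|_B = 0$. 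By \Cref{restB} this means $[f,g] \in \Der_{B^e}(A)$, so $\Der_{B^e}(A)$ is a Lie subalgebra of $\Der(A)$.

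For the second assertion, I would first recall that $\Inn(A)$ is a Lie ideal of $\Der(A)$: writing $\mathrm{ad}_a$ for the inner derivation $x \mapsto ax - xa = [a,x]$, one has $[f, \mathrm{ad}_a] = \mathrm{ad}_{f(a)}$ for any derivation $f$, which is the key identity. Now let $\mathrm{ad}_a \in \Inn_{B^e}(A)$ and $f \in \Der_{B^e}(A)$. By the identity just quoted, $[f, \mathrm{ad}_a] = \mathrm{ad}_{f(a)}$, which is certainly an inner derivation; it remains to check it is a \emph{relative} inner derivation, i.e. that $\mathrm{ad}_{f(a)}$ is a $B^e$-module homomorphism, equivalently (by \Cref{restB}) that $\mathrm{ad}_{f(a)}|_B = 0$, equivalently that $f(a)$ commutes with every element of $B$ (cf. \Cref{relativeHH1}). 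So for $b \in B$ I would compute $[f(a), b] = f(a)b - bf(a)$; applying $f$ to the identity $[a,b] = ab - ba$ and using the Leibniz rule gives $f(ab - ba) = f(a)b + af(b) - f(b)a - bf(a)$. Since $f|_B = 0$ we have $f(b) = 0$, and since $\mathrm{ad}_a$ is a relative inner derivation we have $[a,b] = 0$ for all $b \in B$, hence $f([a,b]) = 0$; combining these, $0 = f(a)b - bf(a) = [f(a), b]$, as required. Therefore $[f, \mathrm{ad}_a] \in \Inn_{B^e}(A)$, and $\Inn_{B^e}(A)$ is a Lie ideal of $\Der_{B^e}(A)$.

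I do not expect any serious obstacle here; the statement is essentially a bookkeeping exercise ensuring that the $B^e$-linearity condition is preserved by the bracket. The only mildly delicate point is the second part, where one must use \emph{both} that $f$ kills $B$ \emph{and} that $a$ centralises $B$ in order to conclude that $f(a)$ centralises $B$; this is exactly the place where the relative hypotheses on \emph{both} arguments of the bracket are needed, and it is worth spelling out rather than leaving to the reader. An alternative, cleaner packaging of the whole lemma would be to note that $\Hom_{B^e}(A,A)$ is closed under composition (a routine check on the $B$-bimodule structure) and hence under the commutator bracket, so that $\Der_{B^e}(A) = \Der(A) \cap \Hom_{B^e}(A,A)$ is an intersection of Lie subalgebras of $\mathrm{End}_k(A)^{(-)}$; the ideal claim then follows from $[f,\mathrm{ad}_a] = \mathrm{ad}_{f(a)}$ together with the observation that $\mathrm{ad}_{f(a)}$ lands back in $\Hom_{B^e}(A,A)$ whenever $f$ and $\mathrm{ad}_a$ both do.
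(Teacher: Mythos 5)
Your proof is correct and follows the same route as the paper, which simply cites Lemma~\ref{restB} (the characterisation $f\in\Der_{B^e}(A)\iff f|_B=0$) and leaves the details to the reader; your argument is exactly the intended elaboration, including the identity $[f,\mathrm{ad}_a]=\mathrm{ad}_{f(a)}$ and the check that $f(a)$ centralises $B$. Nothing is missing.
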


\begin{proof}
  The statement follows from Lemma \ref{restB}.
\end{proof}

\begin{remark}
  Under the assumption that $A/B$ is projective as a $B$-bimodule, there is a long exact sequence 
  \begin{equation*}
    \begin{tikzcd}
      ... \ar{r} & \HH_n(B) \ar{r} & \HH_n(A) \ar{r} & \HH_n(A|B) \ar{r} & \HH_{n-1}(B) \ar{r} & ... 
    \end{tikzcd}
  \end{equation*}
  ending in the degree $1$, see \cite{Kaygun}. Unfortunately, there is no such long exact sequence for Hochschild cohomology under the same assumptions. The closest there is, is a long exact sequence 
  \begin{equation*}
    \begin{tikzcd}
      ... \ar{r} & \HH^n(A|B) \ar{r} & \HH^n(A) \ar{r} & \HH^n(B,A) \ar{r} & \HH^{n+1}(A|B) \ar{r} & ... 
    \end{tikzcd}
  \end{equation*}
  starting in degree $1$ under the same assumptions, see \cite{Kaygun}. If $A/B$ is also injective, we have that $\HH^n(B,A) \cong \HH^n(B)$ for $n \geq 2$, giving us an exact sequence relating the Hochschild cohomologies, starting in degree $2$. Recently, Cibils, Lanzilotta, Marcos and Solotar extended this to a long nearly exact sequence for Hochschild homology, weakening the assumption that $A/B$ is projective as a $B$-bimodule, see \cite{CibilsLanzilottaMarcosSolotar2024, CibilsLanzilottaMarcosSolotar2021, CibilsLanzilottaMarcosSolotar2021C}. For their construction in the case of Hochschild cohomology, see \cite{Lin}. 
\end{remark}

\subsection*{Hochschild cohomology of monomial algebras}Let $Q$ be a finite quiver having $Q_0$ as set of vertices and $Q_1$ as set of arrows. We compose arrows in a quiver 
from right to left, that is, for arrows $\alpha$ and $\beta$ we write $\beta\alpha$ for the path from the source of $\alpha$ to the target
of $\beta$.  Let $\A$ be a finite-dimensional monomial $k$-algebra, that is, $A$ is a path algebra $kQ_{A}$ quotient an admissible ideal $I_{A}$ generated by $Z$ a set of paths of length at least two.  We always assume that no proper subpath of a path in $Z$ is again in $Z$. Let $\mathcal{B}=\mathcal{B}_{\A}$ be the set of paths of $Q$ which do not contain any element of $Z$ as a subpath. Clearly, $\mathcal{B}$ forms a basis of $\A$. We denote by $E \cong kQ_{0}$ the separable subalgebra of $\A$. For a monomial algebra $A$, one can use the minimal projective resolution of the $A$-bimodule $A$ given by Bardzell \cite{Ba} to compute Hochschild cohomology. Applying the contravariant functor $\mathrm{Hom}_{A^e}(-,A)$ to this resolution one obtains a cochain complex denoted by $C_{\min}$.

We recall now the combinatorial description of $\HH^1(\A)$ due to Strametz \cite{Str} using Bardzell's resolution.  Recall that, for a subset $K$ of paths in $Q$, 
\begin{equation*}
  k(K\parall\mathcal{B}) 
\end{equation*}
is defined to be the $k$-vector space spanned by all pairs of paths $p\parall q$ such that $p \in K$ and $q$ is a path in $\mathcal{B}$ that is parallel to $p$. For a relation $r$, we denote by $r^{a \parall  \gamma}$ the sum of all paths in $\mathcal{B}$ obtained by replacing each appearance of the arrow $a$ in $r$ by the path $\gamma$. 

\begin{proposition}\label{Stra2.6} {\rm(\cite[Proposition 2.6]{Str})}
  Let $\A$ be a finite-dimensional monomial algebra. Then the cochain complex $\mathcal{C}_{\min}$ is isomorphic to the following cochain complex $\mathcal{C}_{\operatorname{mon}}$: 
  \begin{center}
    \begin{tikzcd}
      0 \arrow[r] & k(Q_{0}\parall  \mathcal{B}) \arrow[r, "\delta^{0}"] & k(Q_{1}\parall  \mathcal{B}) \arrow[r, "\delta^{1}"] & k(Z\parall  \mathcal{B}) \arrow[r, "\delta^{2}"] & \cdots,
    \end{tikzcd}	
  \end{center}
  where the differentials in degree $0$ and $1$ are defined as follows:
  \begin{equation*}
    \begin{split}
      \delta^{0}:k(Q_{0}\parall  \mathcal{B}) &\to k(Q_{1}\parall  \mathcal{B})\\
      e\parall  p &\mapsto \sum_{s(a)=e,ap \in \mathcal{B}}a\parall  ap - \sum_{t(a)=e,p a \in \mathcal{B}}a\parall  p a,\\
      \delta^{1}:k(Q_{1}\parall  \mathcal{B}) &\to k(Z \parall  \mathcal{B})\\
      a\parall  p &\mapsto \sum_{r \in Z}r\parall  r^{a \parall  p},
    \end{split}
  \end{equation*}
  In particular, we have $\HH^{1}(\A)\cong\mathrm{Ker}(\delta^{1})/ \mathrm{Im}(\delta^{0})$
  as $k$-spaces.
\end{proposition}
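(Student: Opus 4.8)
The plan is to identify $\mathcal{C}_{\min}$ with $\mathcal{C}_{\operatorname{mon}}$ term by term, by writing out Bardzell's resolution \cite{Ba} in low degrees and applying the adjunction between restriction and extension of scalars along $E^e\hookrightarrow A^e$. Recall that Bardzell's minimal projective $A$-bimodule resolution of $A$ has $n$-th term $P_n=A\otimes_E k\,\mathrm{AP}_n\otimes_E A$, where $\mathrm{AP}_n$ is Bardzell's set of associated paths of length $n$, with $\mathrm{AP}_0=Q_0$, $\mathrm{AP}_1=Q_1$, $\mathrm{AP}_2=Z$, and that $\mathcal{C}_{\min}=\Hom_{A^e}(P_{\bullet},A)$ by definition. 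Since $A\otimes_E V\otimes_E A\cong A^e\otimes_{E^e}V$ as $A^e$-modules for every $E$-bimodule $V$, the tensor--hom adjunction provides a natural isomorphism $\Hom_{A^e}(A\otimes_E V\otimes_E A,A)\cong\Hom_{E^e}(V,A)$, so $\mathcal{C}_{\min}$ is isomorphic to the complex whose $n$-th term is $\Hom_{E^e}(k\,\mathrm{AP}_n,A)$.

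Next I would identify $\Hom_{E^e}(k\,\mathrm{AP}_n,A)$ with $k(\mathrm{AP}_n\parall\mathcal{B})$. A basis associated path $p$ from $i$ to $j$, viewed inside the $E$-bimodule $k\,\mathrm{AP}_n$, is annihilated on the left by every vertex idempotent other than $e_j$ and on the right by every one other than $e_i$; hence an $E^e$-linear map must send $p$ into $e_jAe_i$, and the latter has $k$-basis the set of paths of $\mathcal{B}$ parallel to $p$. For $n=0,1,2$ this gives $k(Q_0\parall\mathcal{B})$, $k(Q_1\parall\mathcal{B})$ and $k(Z\parall\mathcal{B})$, matching the terms of $\mathcal{C}_{\operatorname{mon}}$. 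It then remains to transport Bardzell's differentials $d_1(x\otimes a\otimes y)=xa\otimes y-x\otimes ay$ and, for $r=a_n\cdots a_1\in Z$, $d_2(x\otimes r\otimes y)=\sum_{i=1}^n x\,a_n\cdots a_{i+1}\otimes a_i\otimes a_{i-1}\cdots a_1\,y$ through these isomorphisms. A short computation shows that the cochain corresponding to the generator $e\parall p$ of $\Hom_{E^e}(kQ_0,A)$ is carried by $d_1^{\ast}$ to the cochain $a\mapsto ap-pa$ reduced modulo $I_A$, i.e.\ to $\sum_{s(a)=e,\,ap\in\mathcal{B}}a\parall ap-\sum_{t(a)=e,\,pa\in\mathcal{B}}a\parall pa$, and that the cochain corresponding to $a\parall p$ is carried by $d_2^{\ast}$ to $r\mapsto\sum_{i\colon a_i=a}a_n\cdots a_{i+1}\,p\,a_{i-1}\cdots a_1$ reduced modulo $I_A$, i.e.\ to $\sum_{r\in Z}r\parall r^{a\parall p}$. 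These are exactly $\delta^0$ and $\delta^1$; the remaining differentials are handled in the same way using the recursive description of $\mathrm{AP}_n$. Since Bardzell's resolution is a projective resolution of $A$ over $A^e$, the complex $\mathcal{C}_{\min}$ computes $\HH^{\ast}(A)$, and hence $\HH^1(A)\cong\mathrm{Ker}(\delta^1)/\mathrm{Im}(\delta^0)$.

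The conceptual input is only the two facts that $\Hom_{A^e}(A\otimes_E(-)\otimes_E A,A)\cong\Hom_{E^e}(-,A)$ and that $\Hom_{E^e}(-,A)$ converts (associated) paths into parallel paths; the one step that requires genuine care---and the main place an error could slip in---is the combinatorial bookkeeping of the vertex idempotents and of the right-to-left composition convention when pushing $d_1$ and $d_2$ forward, so that the support conditions $ap\in\mathcal{B}$, $pa\in\mathcal{B}$ and the occurrence-by-occurrence substitution defining $r^{a\parall p}$ come out with the correct signs.
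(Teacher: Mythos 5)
Your proposal is correct: the paper gives no proof of this statement, quoting it directly from Strametz (\cite[Proposition 2.6]{Str}), and your argument — dualizing Bardzell's minimal resolution, using the adjunction $\Hom_{A^e}(A\otimes_E(-)\otimes_E A,A)\cong\Hom_{E^e}(-,A)$, identifying $\Hom_{E^e}(k\,\mathrm{AP}_n,A)$ with parallel-path spaces, and transporting $d_1$, $d_2$ to get $\delta^0$, $\delta^1$ — is exactly the argument underlying the cited result. The computations of the induced differentials, including the support conditions $ap,pa\in\mathcal{B}$ and the occurrence-by-occurrence substitution defining $r^{a\parall p}$, come out correctly with the paper's right-to-left composition convention.
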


With this differential, one can endow $\mathrm{Ker}(\delta^{1})/ \mathrm{Im}(\delta^{0})$ with a Lie algebra structure \cite[Theorem 2.7]{Str}. Recall that $\HH^{1}(\A)=\oplus_{i=0}^{n} \HH(\A)_{|_i}$ is a graded Lie algebra, 
where the grading is induced from the grading of $\A$. Note that our grading is shifted by $1$ compared to  \cite[\S 4]{Str}. 

\subsection*{Fundamental group} Let $A$ be a finite-dimensional $k$-algebra over an algebraically closed field $k$. Fix a complete set of orthogonal idempotents $e_1,...,e_n$. A \textit{presentation} is a surjective $k$-algebra map $\nu : kQ \rightarrow A$, where $Q$ is a quiver, such that $\nu$ determines a bijection between $Q_0$ and $\{e_1,...,e_n\}$. We do \underline{not} assume that $I = \mathrm{Ker}(\nu)$ is admissible nor that $e_1,..,e_n$ are primitive idempotents. 

We call the elements $r = \sum_{i \in I} \lambda_i p_i \in I$ \textit{relations} and we say that a relation $r \in I$ is \textit{minimal} if no proper subsum of the relation is contained in $I$. Let $\widetilde{Q}$ be the quiver where we add a formal inverse $\alpha^{-1}$ for each arrow $\alpha$. We define a \textit{walk} in a quiver $Q$ to be a path in $\widetilde{Q}$. 

\begin{definition}[\cite{MD2}]\label{def:fundamentalGroup}
  Let $A$ be a $k$-algebra and let $\nu : kQ \rightarrow A$ be a presentation. Fix an idempotent $e_i \in A$. We define the following elementary homotopies on walks 
  \begin{enumerate}
  \item For all arrows $\alpha \in Q_1$, we have that $\alpha^{-1}\alpha \sim_I s(\alpha)$ and $\alpha\alpha^{-1} \sim_I t(\alpha)$. 
  \item We have that $u \sim_I v$ if $u,v$ appear together in a minimal relation with non-zero coefficents.
  \end{enumerate}
  We let the homotopy relation on walks be the relation generated by these elementary homotopies, i.e. if $v_1 \sim v_2$ then we have that $uv_1w \sim uv_2w$ for all walks $v,w$. Let $\Walk(Q_A,e_i)$ be the set of walks from $e_i$ to $e_i$. The \textit{fundamental group of} $(A,\nu, e_i)$, denoted by $\pi_1(Q_A,I_A, e_i)$, is defined as $\Walk(Q_A,e_i)/\sim_I$. Note that if $Q$ is connected, then the choice of idempotent doesn't matter, therefore we will denote the fundamental group by $\pi_1(Q,I)$ in this case. 
\end{definition}

The fundamental group depends on the ideal $I$ and is therefore not an invariant of the isomorphism class of $A$, see for example \cite[Example 1.2]{LeMeur}.  If $Q$ is not connected, we follow the convention of \cite{DepeSao}
that the fundamental group $\pi_1(Q, I)$ is the direct product of the fundamental groups of the quivers with relations obtained by restricting to the connected components of $Q$.

Consider $\mathrm{Hom}(\pi_1(Q, I), k^{+})$, where $k^{+}$ denotes the additive group of the field $k$. Let $Q$ be a quiver with $n$ vertices and $m$ edges and $c$ connected components. We denote by $\beta_1(Q)$ the first Betti number of $Q$ which is equal $m - n + c$. Recall from \cite[Lemma 1.7]{Br} that for a bound quiver $(Q,I)$ we have  $\dim (\mathrm{Hom}(\pi_1(Q, I), k^{+})) \leq \beta_1(Q)$. Equality holds if $I$ is for example a monomial ideal. For a monomial algebra $A$, by \cite[Theorem C]{Br} it follows that the $\pirank$, which is defined as
\[
  \pirank(\A):=\mathrm{max}\{\dim_k(\mathrm{Hom}(\pi_1(Q, I), k^{+})) ~:~ \A\simeq kQ/I, \textrm{$I$ is an admissible ideal} \},
\] 
is equal to $\beta_1(Q_{\A})$. The $\pirank$ is a derived invariant and an invariant under stable equivalences of Morita type for self-injective algebras \cite[Theorem B]{Br}. However, it is not an invariant under stable equivalences induced by gluing idempotents \cite[Remark 3.25]{LRW}.

\subsection{Lie theory} We recall the basic definitions of solvable Lie algebras and semidirect products, as defined in for example \cite{Erdmann}.

\begin{definition}
  Let $\mathfrak{g}$ be a Lie algebra. We say that $\mathfrak{g}$ is \textit{solvable} if the derived series 
  \begin{equation*}
    \mathfrak{g} \geq[\mathfrak{g},\mathfrak{g}] \geq [[\mathfrak{g},\mathfrak{g}],[\mathfrak{g},\mathfrak{g}]] \geq ... 
  \end{equation*}
  eventually becomes zero. We denote the iterated brackets by $\mathfrak{g}^{(i)}$, where we define $\mathfrak{g}^{(0)} = \mathfrak{g}$. 
\end{definition}

\begin{definition}
  A Lie algebra $\mathfrak{g}$ is \textit{strongly solvable} if its derived subalgebra $[\mathfrak{g},\mathfrak{g}]$ is nilpotent.
\end{definition}

If $k$ is algebraically closed of characteristic zero, as a consequence of Lie's theorem, a Lie algebra is strongly solvable if and only if it is solvable. In positive characteristic, every strongly solvable algebra is solvable, but the converse does not hold, see for example Page 96 in \cite{Sel}. 

\begin{definition}
  The \textit{radical} of a Lie algebra is the maximal solvable ideal. For a Lie algebra $\mathfrak{g}$ we denote this by $\rad(\mathfrak{g})$.
\end{definition}

\begin{definition}
  Let $\mathfrak{g}$ be a Lie algebra. We say that $\mathfrak{g}$ is a \textit{semidirect product} of a subalgebra $\mathfrak{h}$ and ideal $\mathfrak{r}$ if it decomposes as a sum $\mathfrak{g} = \mathfrak{h} \oplus \mathfrak{r}$ of vector spaces. We then denote it by $\mathfrak{g} = \mathfrak{h} \ltimes \mathfrak{r}$. 
\end{definition}

\begin{theorem}\label{thm:Levi}
  Let $\mathfrak{g}$ be a Lie algebra over a field of characteristic zero. Then 
  \begin{equation*}
    \mathfrak{g} \cong \mathfrak{g}/\rad(\mathfrak{g}) \ltimes \rad(\mathfrak{g})
  \end{equation*}  
\end{theorem}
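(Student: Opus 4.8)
The statement is the classical Levi(--Malcev) decomposition, which I would read (for $\mathfrak{g}$ finite-dimensional, as throughout the paper) as the assertion that the short exact sequence of Lie algebras $0 \to \rad(\mathfrak{g}) \to \mathfrak{g} \to \mathfrak{g}/\rad(\mathfrak{g}) \to 0$ splits. Writing $\mathfrak{r} := \rad(\mathfrak{g})$ and $\pi : \mathfrak{g} \to \mathfrak{g}/\mathfrak{r}$ for the projection, a splitting amounts to a Lie subalgebra $\mathfrak{s} \subseteq \mathfrak{g}$ with $\mathfrak{g} = \mathfrak{s} \oplus \mathfrak{r}$ as vector spaces (a \emph{Levi complement}); for such $\mathfrak{s}$ the restriction $\pi|_{\mathfrak{s}} : \mathfrak{s} \to \mathfrak{g}/\mathfrak{r}$ is an isomorphism and $\mathfrak{r}$ is an ideal, so $\mathfrak{g} = \mathfrak{g}/\mathfrak{r} \ltimes \mathfrak{r}$ as claimed. (Note $\chr k = 0$ is used, but not algebraic closedness.) The plan is an induction on $\dim \mathfrak{r}$, the base case $\mathfrak{r} = 0$ being vacuous.

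For the inductive step I would first reduce to abelian $\mathfrak{r}$. If $[\mathfrak{r},\mathfrak{r}] \neq 0$, then $[\mathfrak{r},\mathfrak{r}]$ is an ideal of $\mathfrak{g}$ (Jacobi identity, using $\mathfrak{r} \trianglelefteq \mathfrak{g}$) and is strictly smaller than $\mathfrak{r}$ since $\mathfrak{r}$ is solvable. The radical of $\mathfrak{g}/[\mathfrak{r},\mathfrak{r}]$ is $\mathfrak{r}/[\mathfrak{r},\mathfrak{r}]$, of smaller dimension, so the inductive hypothesis produces a subalgebra $\mathfrak{h} \supseteq [\mathfrak{r},\mathfrak{r}]$ of $\mathfrak{g}$ with $\mathfrak{h}/[\mathfrak{r},\mathfrak{r}]$ a vector-space complement of $\mathfrak{r}/[\mathfrak{r},\mathfrak{r}]$. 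A dimension count then gives $\mathfrak{g} = \mathfrak{h} + \mathfrak{r}$ and $\mathfrak{h} \cap \mathfrak{r} = [\mathfrak{r},\mathfrak{r}]$, and since $\mathfrak{h}/[\mathfrak{r},\mathfrak{r}] \cong \mathfrak{g}/\mathfrak{r}$ is semisimple we get $\rad(\mathfrak{h}) = [\mathfrak{r},\mathfrak{r}]$, again strictly smaller than $\mathfrak{r}$. Applying the inductive hypothesis to $\mathfrak{h}$ yields $\mathfrak{s}$ with $\mathfrak{h} = \mathfrak{s} \oplus [\mathfrak{r},\mathfrak{r}]$, and one checks $\mathfrak{g} = \mathfrak{s} \oplus \mathfrak{r}$, so $\mathfrak{s}$ is a Levi complement in $\mathfrak{g}$.

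It then remains to treat the case $\mathfrak{r}$ abelian. Here the adjoint action of $\mathfrak{g}$ on $\mathfrak{r}$ kills $\mathfrak{r}$, hence factors through $\mathfrak{s} := \mathfrak{g}/\mathfrak{r}$, making $\mathfrak{r}$ a finite-dimensional $\mathfrak{s}$-module; thus $0 \to \mathfrak{r} \to \mathfrak{g} \to \mathfrak{s} \to 0$ is an abelian extension of $\mathfrak{s}$ by the module $\mathfrak{r}$. Such extensions are classified up to equivalence by $H^2(\mathfrak{s},\mathfrak{r})$, with the split (semidirect-product) one corresponding to the zero class, so it suffices to show $H^2(\mathfrak{s},\mathfrak{r}) = 0$. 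Since $\chr k = 0$ and $\mathfrak{s} = \mathfrak{g}/\rad(\mathfrak{g})$ is semisimple (its radical being zero), this is Whitehead's second lemma, and the induction closes.

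The main obstacle --- and the only place where characteristic zero genuinely intervenes --- is the vanishing $H^2(\mathfrak{s},\mathfrak{r}) = 0$ for $\mathfrak{s}$ semisimple; I would either invoke Whitehead's second lemma directly or sketch its proof, whose core is Weyl's complete reducibility theorem for finite-dimensional $\mathfrak{s}$-modules (itself coming from the Casimir operator of a faithful representation together with the nondegeneracy of the trace form on a semisimple Lie algebra), which lets one write a $2$-cocycle as a coboundary. The remaining ingredients --- the reduction to abelian radical, the dimension and ideal bookkeeping, and the identification of abelian extensions with $H^2$ --- are all routine.
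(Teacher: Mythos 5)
Your proposal is correct: it is the classical Levi--Malcev argument (induction on $\dim \rad(\mathfrak{g})$, reduction to abelian radical, then splitting the abelian extension via Whitehead's second lemma / Weyl complete reducibility), and you rightly note that finite-dimensionality must be read into the statement and that algebraic closedness is not needed. The paper itself gives no proof but simply cites Levi's theorem in \cite{Jacobson1962}, which is essentially the same standard argument you reproduce, so there is nothing to compare beyond that.
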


For a proof see, for example, Levi's Theorem on Page 91 in \cite{Jacobson1962}.

\begin{remark}
  We call $\mathfrak{g}/\rad(\mathfrak{g}) \ltimes \rad(\mathfrak{g})$ the \textit{Levi decomposition} of $\mathfrak{g}$.  
\end{remark}

\begin{example}\label{Example:deleteRows}
  Let $\mathfrak{g} \subseteq \mathfrak{gl}_n$ be the subspace of matrices where the first $m$ rows are zero. We can write every element of $\mathfrak{g}$ as a pair $(\mathcal{A},\mathcal{B})$ where $\mathcal{A}$ is the $(n-m) \times m$ matrix given by the last $n-m$ rows and first $m$ columns, and $\mathcal{B}$ is the $(n-m) \times (n-m)$ square matrix given by last $n-m$ rows and columns. Let $x = (\mathcal{A},\mathcal{B}),y = (\mathcal{A}',\mathcal{B}') \in \mathfrak{g}$, then 
  \begin{equation*}
    [x,y] = 
    \begin{pmatrix}
      0 & 0 \\
      \mathcal{B}\mathcal{A}' - \mathcal{B}'\mathcal{A} & [\mathcal{B}, \mathcal{B}']
    \end{pmatrix}.
  \end{equation*}
  Note thus that $\mathfrak{g}$ is a semidirect product of $\mathfrak{gl}_m$ and the ideal given by $(\mathcal{A},0)$. From the bracket, note that if $\mathfrak{h} \subseteq \mathfrak{gl}_m$ is an ideal then $(\mathcal{A},0) + \mathfrak{h}$ is an ideal of $\mathfrak{g}$. Furthermore, $\mathfrak{h}$ is solvable if and only if $(\mathcal{A},0) + (\mathfrak{h},0)$ is solvable. Note that $(\mathcal{A},0)$ is solvable and part of the radical. Therefore, we get that the radical of $\mathfrak{g}$ is $ (\mathcal{A},0) + \rad(\mathfrak{gl}_m)$. 
\end{example}

\section{Relative Hochschild cohomology for monomial algebras}
\label{relhochmon}
\subsection{Combinatorial description of the relative $\HH^1$} The goal of this subsection is to provide a combinatorial description of the first relative Hochschild cohomology in the case of monomial algebras, as in Strametz's work \cite{Str}.  We briefly recall the notation: Let $\A$ be a finite-dimensional monomial $k$-algebra.  We denote by $E \cong kQ_{0}$ the separable subalgebra of $\A$. Throughout this section we assume that $A$ is connected. We compute $\HH^1(A)=\mathrm{Ker}(\delta^1_A)/\mathrm{Im}(\delta^0_A)$ following the description of Proposition \ref{Stra2.6}.

\begin{remark}\label{relker}
  By \cite[Proposition 2.8]{Str} we have that the Lie algebras 
  $\mathrm{Ker}(\delta^1_A)$ and $\mathrm{Der}_{E^e}(\A)=\mathrm{Der}(\A)\cap \mathrm{Hom}_{E^e}(\A,\A)$ (with the bracket defined as $f\circ g-g\circ f$) are isomorphic. More precisely, take $f\in \mathrm{Der}_{E^e}(\A)$. Assume that $f(a)=\sum_{a\parall p\in (Q_{\A})_1\parall \B} \lambda_{a\parall p}p$, where $a \in (Q_{\A})_1$, $\lambda_{a\parall p}\in k$. Then 
  \[
    f \mapsto \sum_{a\parall p\in (Q_{\A})_1\parall \B}\lambda_{a\parall p}a\parall p
  \]
  defines an isomorphism. The isomorphism restricts to $\mathrm{Im}(\delta^0_A)$ and $\mathrm{Inn}_{E^e}(\A)$, see \cite[Corollary 2.9]{Str} and \cite{GerstenhaberSchack1992}.
\end{remark}

\begin{definition}
  Let $B$ be a monomial subalgebra of $A$ such that $Q_B$ is a subquiver of $Q_A$. We denote by $\mathrm{Ker}(\delta^1_{\A|B})$ the set of elements $f$ in $\mathrm{Ker}(\delta^1_{\A})$ such that $f$ restricted to the arrows of $Q_B$ is zero. Similarly, we denote by $\mathrm{Im}(\delta^0_{\A|B})$ the set of elements $g$ in $\mathrm{Im}(\delta^0_{\A})$ such that $g$ restricted to the arrows of $Q_B$ is zero. In other words, $\mathrm{Im}(\delta^0_{\A|B})= \mathrm{Im}(\delta^0_{\A}) \cap \mathrm{Ker}(\delta^1_{\A|B})$.
\end{definition}

\begin{remark}\label{rem:relcomb}
  Let $B$ be a monomial subalgebra of $A$ such that $Q_B$ is a subquiver of $Q_A$. Let $f:=\sum_{a\parall \gamma\in (Q_{\A})_1\parall \mathcal{B}}\lambda_{a,\gamma} a\parall \gamma$ be an element in $\mathrm{Ker}(\delta^1_{\A})$. If $f\in \mathrm{Ker}(\delta^1_{\A|B})$, then
  \[
    f=\sum_{a\parall \gamma\in (Q_{\A})_1\parall \mathcal{B},\ a\notin (Q_B)_1}\lambda_{a,\gamma} a\parall \gamma.
  \]
  Elements in $\mathrm{Im}(\delta^0_{\A|B})$ can be described similarly. Clearly, $\mathrm{Im}(\delta^0_{\A|B})\subseteq \mathrm{Ker}(\delta^1_{\A|B})$. 
\end{remark}

We describe $\HH^1(\A|B)$ in terms of the complex defined by Strametz. 

\begin{lemma}\label{relcomb}
  Let $\A$ be a monomial algebra.  Let $B$ be a monomial subalgebra of $A$ such that $Q_B$ is a subquiver of $Q_A$. Then $\mathrm{Ker}(\delta^1_{\A|B})$ is a Lie subalgebra of $\mathrm{Ker}(\delta^1_{\A})$ and $\mathrm{Im}(\delta^0_{\A|B})$ is a Lie ideal of $\mathrm{Ker}(\delta^1_{\A|B})$. In addition, $\HH^1(\A|B)$ is isomorphic as a Lie algebra to $\mathrm{Ker}(\delta^1_{\A|B})/\mathrm{Im}(\delta^0_{\A|B})$. 
\end{lemma}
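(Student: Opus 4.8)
The plan is to leverage Remark \ref{relativeHH1}, Remark \ref{relker} and Lemma \ref{restB} to reduce everything to facts already available for the absolute case. Recall that by Remark \ref{relativeHH1} we have $\HH^1(\A|B)\cong \Der_{B^e}(\A)/\Inn_{B^e}(\A)$, and that by Lemma \ref{restB} a derivation $f$ lies in $\Hom_{B^e}(\A,\A)$ if and only if $f|_B=0$. Since $B$ is a monomial subalgebra with $Q_B$ a subquiver of $Q_A$, the arrows of $Q_B$ generate $B$ as an algebra over the common separable subalgebra; hence a derivation $f$ vanishes on $B$ if and only if it vanishes on the arrows of $Q_B$. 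This is the bridge: under the Strametz isomorphism $\Der_{E^e}(\A)\cong \Ker(\delta^1_{\A})$ of Remark \ref{relker}, which sends $f$ to $\sum \lambda_{a\parall p}\, a\parall p$ with $f(a)=\sum\lambda_{a\parall p}p$, the condition $f|_{(Q_B)_1}=0$ translates exactly into $\lambda_{a\parall p}=0$ for every arrow $a\in(Q_B)_1$, which is precisely the defining condition of $\Ker(\delta^1_{\A|B})$ (as spelled out in Remark \ref{rem:relcomb}).

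First I would make this correspondence precise: restrict the Strametz isomorphism $\varphi:\Der_{E^e}(\A)\tosim\Ker(\delta^1_{\A})$ to the subspace $\{f\in\Der_{E^e}(\A): f|_{(Q_B)_1}=0\}$. By the discussion above and Lemma \ref{restB}, this subspace equals $\Der_{B^e}(\A)$ (note $B^e$-linearity is stronger than $E^e$-linearity, so $\Der_{B^e}(\A)\subseteq\Der_{E^e}(\A)$ automatically, and the vanishing on arrows of $Q_B$ is equivalent to $B^e$-linearity by Lemma \ref{restB} combined with $E\subseteq B$). Its image under $\varphi$ is by inspection exactly $\Ker(\delta^1_{\A|B})$. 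So $\varphi$ restricts to a linear isomorphism $\Der_{B^e}(\A)\tosim\Ker(\delta^1_{\A|B})$. Since $\varphi$ is a Lie algebra isomorphism onto $\Ker(\delta^1_{\A})$ and $\Der_{B^e}(\A)$ is a Lie subalgebra of $\Der_{E^e}(\A)$ by Lemma \ref{lem:subideal} (or directly: the bracket of two derivations vanishing on a generating set of $B$ again vanishes there), the restriction is a Lie algebra isomorphism; in particular $\Ker(\delta^1_{\A|B})$ is a Lie subalgebra of $\Ker(\delta^1_{\A})$, proving the first assertion.

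Next I would handle $\Im(\delta^0_{\A|B})$. By the last sentence of Remark \ref{relker}, $\varphi$ carries $\Inn_{E^e}(\A)$ isomorphically onto $\Im(\delta^0_{\A})$. Intersecting with $\Der_{B^e}(\A)$ on the source side and with $\Ker(\delta^1_{\A|B})$ on the target side, and using $\Inn_{B^e}(\A)=\Inn_k(\A)\cap\Hom_{B^e}(\A,\A)=\Inn_{E^e}(\A)\cap\Der_{B^e}(\A)$, we get that $\varphi$ restricts to an isomorphism $\Inn_{B^e}(\A)\tosim\Im(\delta^0_{\A|B})$, where the target identification uses exactly $\Im(\delta^0_{\A|B})=\Im(\delta^0_{\A})\cap\Ker(\delta^1_{\A|B})$ from the definition. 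By Lemma \ref{lem:subideal}, $\Inn_{B^e}(\A)$ is a Lie ideal of $\Der_{B^e}(\A)$; transporting along the Lie isomorphism $\varphi$, $\Im(\delta^0_{\A|B})$ is a Lie ideal of $\Ker(\delta^1_{\A|B})$. Finally, $\varphi$ induces an isomorphism of quotient Lie algebras
\[
  \HH^1(\A|B)\cong \Der_{B^e}(\A)/\Inn_{B^e}(\A)\;\xrightarrow{\ \sim\ }\;\Ker(\delta^1_{\A|B})/\Im(\delta^0_{\A|B}),
\]
the first isomorphism being Remark \ref{relativeHH1}. This completes the proof.

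The only genuinely non-formal point — the main obstacle, such as it is — is the claim that a derivation vanishes on $B$ if and only if it vanishes on the arrows of $Q_B$; this needs that $B$ is generated over its semisimple part $E$ by the arrows of $Q_B$, i.e. that the inclusion $Q_B\subseteq Q_A$ really does realize $B$ as the (monomial) subalgebra $kQ_B/I_B$ sitting inside $kQ_A/I_A$ with the arrows of $Q_B$ mapping to the corresponding arrows of $Q_A$. This is part of the standing hypothesis ``$B$ is a monomial subalgebra of $A$ such that $Q_B$ is a subquiver of $Q_A$'', so once that is unwound the argument is a routine transport of structure along the Strametz isomorphism. Everything else is bookkeeping with the intersections defining the relative kernels and images.
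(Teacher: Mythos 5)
Your proof is correct and follows essentially the same route as the paper: both reduce via Remark \ref{relativeHH1} and Lemma \ref{restB} to derivations vanishing on $B$, transport along the Strametz isomorphism $\Der_{E^e}(\A)\cong\Ker(\delta^1_{\A})$ of Remark \ref{relker}, and identify $\Der_{B^e}(\A)$ (resp.\ $\Inn_{B^e}(\A)$) with $\Ker(\delta^1_{\A|B})$ (resp.\ $\Im(\delta^0_{\A|B})$). Your explicit justification that vanishing on $B$ is equivalent to vanishing on the arrows of $Q_B$, and the appeal to Lemma \ref{lem:subideal} for the Lie-theoretic claims, only make explicit steps the paper leaves implicit.
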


\begin{proof}
  By Remark \ref{relativeHH1}, we have $\HH^1(\A|B,\A)\cong \mathrm{Der}_{B^e}(\A)/\mathrm{Inn}_{B^e}(\A)$. Since $B$ contains $E$, then $\mathrm{Der}_{B^e}(\A)=\mathrm{Der}_k(\A)\cap \Hom_{B^e}(\A, \A)=\mathrm{Der}_{E^e}(\A)\cap \Hom_{B^e}(\A, \A)$. By Lemma \ref{restB}, the space $\mathrm{Der}_{B^e}(\A)$ consists of $k$-derivations which are $E^e$-module homomorphisms such that when restricted to $B$ are zero. Then, by the isomorphism $\mathrm{Der}_{E^e}(\A)\cong \mathrm{Ker}(\delta^1_{\A})$, the elements in $\mathrm{Der}_{B^e}(\A)$ correspond to elements in $\mathrm{Ker}(\delta^1_{\A})$ which vanish when restricted to the arrows of $B$, that is, to $\mathrm{Ker}(\delta^1_{\A|B})$. The same holds for $\mathrm{Im}(\delta^0_{\A|B})$. The statement follows.
\end{proof}

\subsection{Relative Hochschild cohomology for radical square zero algebras} The goal of this subsection is to study the Lie algebra structure of $\HH^1(A|B)$ where $A, B$ are radical square zero algebras such that $Q_B$ is a subquiver of $Q_A$. 

Recall that the \textit{$n$-Kronecker quiver} is a quiver consisting of two vertices, say $e$ and $f$, and $n$ arrows from $e$ to $f$. The $n$-\textit{bouquet quiver} is a quiver consisting of a single vertex and $n$ loops. The $n$-\textit{bouquet} is the radical square zero algebra associated with the $n$-bouquet quiver.
\begin{lemma}
  \label{lem:nkron}
  Let $m$ be a non-negative integer and $n$ a positive integer such that $m\leq n$. Let $A,B$ be the $n$ and the $m$-Kronecker quivers (or the $n$ and the $m$-bouquets,
  excluding the case when the characteristic of $k$ is $2$ and $Q_A$ is a quiver with one loop $\alpha$ and $m=0$), respectively. Then $\mathrm{Ker}(\delta^1_{A|B})\cong \mathfrak{I}\rtimes \mathfrak{gl}_{n-m}(k)$, where $\mathfrak{I}$ is an abelian Lie ideal of $\mathrm{Ker}(\delta^1_{A|B})$ isomorphic to $k^{m(n-m)}$. 
\end{lemma}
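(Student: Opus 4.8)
The plan is to reduce the statement to an explicit computation inside $\mathfrak{gl}_n(k)$, combining the combinatorial model of Lemma~\ref{relcomb} with the identification in Remark~\ref{relker}.

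First I would determine $\mathrm{Ker}(\delta^1_A)$ as a Lie algebra. By Remark~\ref{relker} it is isomorphic to $\mathrm{Der}_{E^e}(A)$, which by Lemma~\ref{restB} consists of the $k$-derivations $f$ of $A$ with $f|_E = 0$. Since $A$ is radical square zero, such an $f$ is determined by its restriction to $\mathrm{rad}(A)$, which is the $n$-dimensional space spanned by the arrows of $A$. For an arrow $a$ from $v$ to $w$ one has $f(a) = e_w f(a) e_v \in e_w A e_v$, and in a radical square zero algebra this space is the span of the arrows $v \to w$, together with $k e_v$ when $v = w$. In the Kronecker case $A$ has no loops, so $f(\mathrm{rad}(A)) \subseteq \mathrm{rad}(A)$ automatically; in the bouquet case, applying $f$ to the length-two relations $\alpha_j\alpha_i \in I_A$ forces the $k e$-component of each $f(\alpha_i)$ to vanish, \emph{except} when $\chr(k) = 2$ and there is a single loop $\alpha$ --- there the relation $\alpha^2$ only yields $2\alpha = 0$ and the vertex component is unconstrained. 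The hypothesis excludes this degeneracy exactly when $m = 0$; if instead $\chr(k) = 2$, $Q_A$ is a single loop and $m = 1$, then $B = A$, so $\mathrm{Ker}(\delta^1_{A|B}) = 0$ and the right-hand side is also $0$ (as $\mathfrak{gl}_0 = 0$ and $k^{m(n-m)} = k^0 = 0$), and we are done in that case. Otherwise every $f \in \mathrm{Der}_{E^e}(A)$ preserves $\mathrm{rad}(A)$, and restriction yields an injective Lie algebra homomorphism $\mathrm{Der}_{E^e}(A) \to \mathrm{End}_{E^e}(\mathrm{rad}(A))$ which is onto, because in a radical square zero algebra any $E$-bimodule endomorphism of $\mathrm{rad}(A)$ extends to a derivation (the Leibniz rule is vacuous on products of elements of $\mathrm{rad}(A)$, since $\mathrm{rad}(A)^2 = 0$). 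In both of our cases $\mathrm{rad}(A)$ is concentrated in a single source--target pair of vertices, so $\mathrm{End}_{E^e}(\mathrm{rad}(A)) = \mathrm{End}_k(\mathrm{rad}(A)) \cong \mathfrak{gl}_n(k)$; hence $\mathrm{Ker}(\delta^1_A) \cong \mathfrak{gl}_n(k)$.

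Next I would pin down $\mathrm{Ker}(\delta^1_{A|B})$. Ordering the arrows of $A$ so that the $m$ arrows of $Q_B$ come first, the subalgebra $\mathrm{Ker}(\delta^1_{A|B})$ corresponds under the above isomorphism to the set of $f$ whose restriction to $\mathrm{rad}(A)$ annihilates the span of the first $m$ arrows, i.e. to the matrices in $\mathfrak{gl}_n(k)$ whose first $m$ columns vanish. Writing such a matrix in block form for the partition $n = m + (n - m)$ of both rows and columns as $\left(\begin{smallmatrix} 0 & \mathcal{A} \\ 0 & \mathcal{B} \end{smallmatrix}\right)$ with $\mathcal{A} \in k^{m \times (n-m)}$ and $\mathcal{B} \in k^{(n-m) \times (n-m)}$, a direct computation exactly as in Example~\ref{Example:deleteRows} (with the roles of rows and columns interchanged) gives
\[
  \left[\begin{pmatrix} 0 & \mathcal{A} \\ 0 & \mathcal{B} \end{pmatrix}, \begin{pmatrix} 0 & \mathcal{A}' \\ 0 & \mathcal{B}' \end{pmatrix}\right] = \begin{pmatrix} 0 & \mathcal{A}\mathcal{B}' - \mathcal{A}'\mathcal{B} \\ 0 & [\mathcal{B},\mathcal{B}'] \end{pmatrix}.
\]
From this, $\mathfrak{I} := \left\{\left(\begin{smallmatrix} 0 & \mathcal{A} \\ 0 & 0 \end{smallmatrix}\right) : \mathcal{A} \in k^{m\times(n-m)}\right\}$ is an abelian ideal of $\mathrm{Ker}(\delta^1_{A|B})$ of dimension $m(n-m)$, the matrices $\left(\begin{smallmatrix} 0 & 0 \\ 0 & \mathcal{B} \end{smallmatrix}\right)$ form a complementary subalgebra isomorphic to $\mathfrak{gl}_{n-m}(k)$, and therefore $\mathrm{Ker}(\delta^1_{A|B}) \cong \mathfrak{I} \rtimes \mathfrak{gl}_{n-m}(k)$ with $\mathfrak{I} \cong k^{m(n-m)}$, as claimed.

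The only genuinely delicate point is the claim in the first step that a relative derivation of a radical square zero algebra preserves the radical --- equivalently, that the image of each arrow has no vertex component. This is precisely where the characteristic-two single-loop hypothesis enters, and it is worth treating the two degenerate situations (single loop, $\chr(k) = 2$, with $m = 0$ excluded and $m = 1$ trivial) explicitly before running the main argument; the block-matrix bookkeeping in the remaining steps is routine.
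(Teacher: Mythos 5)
Your proposal is correct and takes essentially the same route as the paper: both identify $\mathrm{Ker}(\delta^1_{A|B})$ with the copy of $\mathfrak{gl}_n(k)$ given by the $n$ parallel arrows with the $m$ rows/columns indexed by the arrows of $B$ set to zero, and then exhibit the abelian ideal $\mathfrak{I}\cong k^{m(n-m)}$ and the complementary $\mathfrak{gl}_{n-m}(k)$ giving the semidirect product. The only differences are cosmetic: you phrase the identification via $\Der_{E^e}(A)$ and block matrices with vanishing first columns (the transpose of the convention in Example~\ref{Example:deleteRows}), and you fold the $n=1$ and characteristic-two loop degeneracies into the main argument, whereas the paper writes down the basis $\{\alpha_i\parall\alpha_j\}$ in the Strametz model, computes the bracket on basis elements, and treats $n=1$ separately.
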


\begin{proof}
  Assume $n>1$. A basis for $\mathrm{Ker}(\delta^1_{A|B})$ is given by 
  $$\{ \alpha_i\parall \alpha_j\  |\ m+1\leq i\leq n,\  1\leq j\leq n\}.$$ 
  This basis can be seen as the basis obtained by deleting the first $m$ rows of $\mathfrak{gl}_n(k)$.  Note that if $m=0$, then $\mathrm{Ker}(\delta^1_{A|B})=\mathrm{Ker}(\delta^1_A)$ which is isomorphic to $\mathfrak{gl}_n(k)$. The Lie subalgebra of $\mathrm{Ker}(\delta^1_{A|B})$ generated by $$\{ \alpha_i\parall \alpha_j\  |\ m+1\leq i, j\leq n\}$$ is isomorphic to $\mathfrak{gl}_{n-m}(k)$. We denote by $\mathfrak{I}$ the subspace generated by $$\{ \alpha_i\parall \alpha_j\  |\ m+1\leq i\leq n,\  1\leq j\leq m\}$$ and we claim that this is an abelian Lie ideal of $\mathrm{Ker}(\delta^1_{A|B})$. Indeed, let $\alpha_i\parall \alpha_j$ be an element in  $\mathfrak{I}$ and $\alpha_k\parall \alpha_l\in\mathrm{Ker}(\delta^1_{A|B})$, then
  \[
    [\alpha_i\parall \alpha_j,\alpha_k\parall \alpha_l]=\delta_{l,i} \alpha_k\parall \alpha_j-\delta_{j,k} \alpha_i\parall \alpha_l=\delta_{l,i} \alpha_k\parall \alpha_j\in \mathfrak{I},
  \]
  where $\delta_{i,j}$ denotes the Kronecker symbol. The last equality and the fact that $\alpha_k\parall \alpha_j\in \mathfrak{I}$ follows from observing that $m+1\leq k \leq n$ and $1\leq j \leq m$. If we assume that $\alpha_k\parall \alpha_l\in \mathfrak{I}$, then $\delta_{l,i}=0$ since in this case $m+1\leq i \leq n$ and $1\leq l \leq m$. Therefore, $\mathfrak{I}$ is an abelian ideal of $\mathrm{Ker}(\delta^1_{A|B})$. 

  If $n=1$, then $Q_A$ is either an $A_2$-quiver or a quiver with one loop. In both cases $m$ can be either $0$ or $1$. Recall that we have excluded the case when the characteristic of the field $k$ is $2$ and $Q_A$ is a quiver with one loop and $m=0$. Hence, if $m=0$, then $\HH^1(A|B)$ is an abelian Lie algebra of dimension $1$. If $m=1$, then $\HH^1(A|B)=0$.
\end{proof}

\begin{remark}\label{rem:char2loop}
  If the characteristic of the field $k$ is $2$ and $Q_A$ is a quiver with one loop $\alpha$ and $m=0$, then $\mathrm{Ker}(\delta^1_{A|B})$ is a solvable Lie algebra of dimension $2$ generated by $\alpha\parall  e$ and $\alpha\parall \alpha$. 
\end{remark}

\begin{lemma} 
  \label{lem:abelianrad2}
  Let $n$ be a positive integer. Let $A,B$ be the $n$ and the $(n-1)$-Kronecker quivers (or the $n$ and $(n-1)$-bouquets), respectively. Then the derived subalgebra of $\mathrm{Ker}(\delta^1_{A|B})$ is an abelian Lie algebra.
\end{lemma}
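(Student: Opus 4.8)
The plan is to read off the statement from Lemma~\ref{lem:nkron}, invoking the elementary fact that a Lie algebra possessing an abelian ideal of codimension at most one has abelian derived subalgebra.

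First I would specialise Lemma~\ref{lem:nkron} to the case $m = n-1$, so that $m \le n$ and, outside the exceptional situation where $n = 1$, $\chr k = 2$ and $Q_A$ is a single loop, that lemma gives an isomorphism of Lie algebras
\[
  \mathrm{Ker}(\delta^1_{A|B}) \cong \mathfrak{I} \rtimes \mathfrak{gl}_{n-m}(k) = \mathfrak{I} \rtimes \mathfrak{gl}_{1}(k),
\]
with $\mathfrak{I}$ an abelian Lie ideal isomorphic to $k^{m(n-m)} = k^{n-1}$ (in particular $\mathfrak{I}=0$ when $n=1$). Then, since $\mathfrak{I}$ is an ideal and the quotient $\mathrm{Ker}(\delta^1_{A|B})/\mathfrak{I} \cong \mathfrak{gl}_1(k)$ is one-dimensional and hence abelian, we get $[\mathrm{Ker}(\delta^1_{A|B}),\mathrm{Ker}(\delta^1_{A|B})] \subseteq \mathfrak{I}$; because $\mathfrak{I}$ is abelian, the derived subalgebra, being a linear subspace of $\mathfrak{I}$, is abelian. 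Equivalently, one can argue directly from the bracket in the proof of Lemma~\ref{lem:nkron}: with $m = n-1$ every basis vector of $\mathrm{Ker}(\delta^1_{A|B})$ has the form $\alpha_n\parall\alpha_j$ for $1 \le j \le n$, and $[\alpha_n\parall\alpha_j,\alpha_n\parall\alpha_l] = \delta_{l,n}\alpha_n\parall\alpha_j - \delta_{j,n}\alpha_n\parall\alpha_l$ always lands in the span of $\{\alpha_n\parall\alpha_j : j \le n-1\}$, on which the bracket vanishes identically.

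It then remains to handle the exceptional case $n = 1$, $\chr k = 2$, $Q_A$ a single loop, $m = 0$. Here Lemma~\ref{lem:nkron} does not apply, but by Remark~\ref{rem:char2loop} the space $\mathrm{Ker}(\delta^1_{A|B})$ is two-dimensional, and the derived subalgebra of any two-dimensional Lie algebra is at most one-dimensional, hence abelian. (The remaining $n=1$ possibilities, namely $Q_A$ of type $A_2$ or a single loop in characteristic $\neq 2$, are subsumed in the first paragraph with $\mathfrak{I}=0$.)

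I do not anticipate a genuine obstacle: every step is a direct consequence of results already in place, and the only point calling for care is ensuring that the degenerate cases $n=1$ and the loop in characteristic two are treated explicitly rather than tacitly ignored.
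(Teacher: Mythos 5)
Your proof is correct and takes essentially the same route as the paper: specialise Lemma~\ref{lem:nkron} to $m=n-1$ so that the derived subalgebra lands in the abelian ideal $\mathfrak{I}$, and handle the characteristic-two single-loop case ($n=1$, $m=0$) separately via Remark~\ref{rem:char2loop}. The only (immaterial for the statement) difference is that the paper identifies the derived subalgebra as exactly $\mathfrak{I}$, using that $[-,\alpha_n\parall \alpha_n]$ acts as the identity on $\mathfrak{I}$, a sharper fact it reuses later in the proof of Proposition~\ref{prop:HH1rad2ab}.
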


\begin{proof}
  If the characteristic of $k$ is $2$ and $Q_A$ is a quiver with one loop $\alpha$ and one vertex $e$, then by Remark \ref{lem:abelianrad2} the derived subalgebra of $\mathrm{Ker}(\delta^1_{A|B})$ is generated $\alpha\parall  e$, hence it is abelian. 
  Otherwise, it follows from Lemma \ref{lem:nkron} that $\mathrm{Ker}(\delta^1_{A|B})\cong \mathfrak{I}\rtimes k$, where $\mathfrak{I}\cong  k^{n-1}$. Note that the derived subalgebra of $\mathrm{Ker}(\delta^1_{A|B})$ is $\mathfrak{I}$. Indeed, $\mathfrak{I}$ is an abelian ideal and $[-,\alpha_n\parall \alpha_n]$ acts as identity on $\mathfrak{I}$.  
\end{proof}

\textbf{Notation.} Given a quiver $Q$, we denote by $\overline{Q}$ the quiver such that $\overline{Q}_0=Q_0$ and $\overline{Q}_1$ is the set of equivalence classes of parallel arrows. For each equivalence class $[\alpha]$ in $\overline{Q}$, we denote by $|\alpha|$ the cardinality of the equivalence class. 

Using a similar notation as in Lemma \ref{lem:nkron}, we introduce two subspaces of $\mathrm{Ker}(\delta^1_{A|B})$.  We denote by $Q_C$ the complement quiver of $Q_A$ by the arrows of $Q_B$. 
For each equivalence class $[\alpha]$ in $\overline{Q_C}$, we denote by $[\alpha'], [\alpha'']$ the corresponding equivalence classes in $\overline{Q_B}$ and in $\overline{Q_A}$, respectively. We order the arrows in $Q_A$ such that the first $|\alpha'|$ arrows are in $Q_B$ and the rest of the arrows are in $Q_C$. We denote by $ \mathfrak{I}_\alpha$ the Lie subalgebra of $\mathrm{Ker}(\delta^1_{A|B})$ having a $k$-basis given by 
\[
  \{\alpha_i\parall \alpha_j\ | \ \alpha_i\in [\alpha] \textrm { in } (\overline{Q_C})_1,\alpha_j\in [\alpha'] \textrm { in } (\overline{Q_B})_1, \ |\alpha'|+1\leq i\leq |\alpha''|,  \ 1\leq j\leq |\alpha'| \}.
\]
Note that $\mathfrak{I}_\alpha$ is trivial if the equivalence class $[\alpha']\in \overline{Q_B}$ is trivial. Similarly, for each equivalence class $[\alpha]$ in $\overline{Q_C}$, the Lie subalgebra of $\mathrm{Ker}(\delta^1_{A|B})$ having a $k$-basis given by  
\[
  \{\alpha_i\parall \alpha_j\ | \ \alpha_i ,\alpha_j \in [\alpha] \textrm { in } (\overline{Q_C})_1, \ |\alpha'|+1\leq i,j\leq |\alpha''|\}
\]
is isomorphic to $\mathfrak{gl}_{|\alpha|}(k)$.

\begin{proposition}
  \label{prop:kerrad}
  Let $A, B$ be radical square zero algebras such that $Q_A$ is not the quiver with one loop and $Q_B$ is a subquiver of $Q_A$. Then 
  \[
    \mathrm{Ker}(\delta^1_{A|B})\cong \prod_{[\alpha]\in (\overline{Q_C})_1} \mathfrak{I}_\alpha \rtimes \mathfrak{gl}_{|\alpha|}(k)
  \]
\end{proposition}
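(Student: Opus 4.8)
The plan is to reduce Proposition~\ref{prop:kerrad} to the two-vertex (or one-vertex) computations of Lemma~\ref{lem:nkron} by decomposing $\mathrm{Ker}(\delta^1_{A|B})$ along equivalence classes of parallel arrows. First I would recall that for a radical square zero monomial algebra the relation set $Z$ is empty, so the Strametz complex truncates and $\mathrm{Ker}(\delta^1_A)$ is everything, namely $k((Q_A)_1 \parall \mathcal{B})$ with $\mathcal{B} = (Q_A)_0 \cup (Q_A)_1$; parallelism of an arrow $a$ with a path $p$ of length $\leq 1$ forces $p$ to be an arrow parallel to $a$ (the length-zero case $p = e$ contributes nothing once $\delta^0$ is taken into account and, more to the point here, $e$ is not parallel to an arrow). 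Hence a $k$-basis of $\mathrm{Ker}(\delta^1_{A|B})$ is $\{\alpha_i \parall \alpha_j \mid \alpha_i,\alpha_j \text{ parallel}, \ \alpha_i \notin (Q_B)_1\}$ by Remark~\ref{rem:relcomb}. Since $\alpha_i \parall \alpha_j$ is nonzero only when $\alpha_i$ and $\alpha_j$ lie in the same equivalence class $[\alpha''] \in \overline{Q_A}$, this basis splits as a disjoint union over $[\alpha] \in (\overline{Q_C})_1$ of the bases of the summands described just before the statement; as vector spaces $\mathrm{Ker}(\delta^1_{A|B}) = \bigoplus_{[\alpha]} \bigl(\mathfrak{I}_\alpha \oplus \mathfrak{gl}_{|\alpha|}(k)\bigr)$.

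Next I would verify that this vector space decomposition is compatible with the Lie bracket. The bracket on $k((Q_A)_1 \parall \mathcal{B})$, transported from the commutator of derivations via Remark~\ref{relker}, satisfies $[\alpha_i \parall \alpha_j, \alpha_k \parall \alpha_l] = \delta_{j,k}\,\alpha_i \parall \alpha_l - \delta_{l,i}\,\alpha_k \parall \alpha_j$ (this is exactly the rule used in the proof of Lemma~\ref{lem:nkron}). In particular the bracket of $\alpha_i \parall \alpha_j$ and $\beta_k \parall \beta_l$ vanishes unless $\alpha_i, \alpha_j, \beta_k, \beta_l$ all lie in a single class $[\alpha'']$, because $\delta_{j,k}$ (resp. $\delta_{l,i}$) is nonzero only when $\alpha_j = \beta_k$ (resp. $\beta_k = \alpha_i$), which already forces all four arrows into the same parallelism class. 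Therefore the distinct-$[\alpha]$ summands commute and $\mathrm{Ker}(\delta^1_{A|B}) \cong \prod_{[\alpha] \in (\overline{Q_C})_1} \mathrm{Ker}(\delta^1_{A_{[\alpha]}|B_{[\alpha]}})$, where $A_{[\alpha]}$, $B_{[\alpha]}$ are the $|\alpha''|$- and $|\alpha'|$-Kronecker (or bouquet) algebras carried by the class $[\alpha]$.

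Now I would invoke Lemma~\ref{lem:nkron} for each factor: with $n = |\alpha''|$ and $m = |\alpha'| = n - |\alpha|$ it gives $\mathrm{Ker}(\delta^1_{A_{[\alpha]}|B_{[\alpha]}}) \cong \mathfrak{I}_\alpha \rtimes \mathfrak{gl}_{|\alpha|}(k)$, with $\mathfrak{I}_\alpha$ the stated abelian ideal and $\mathfrak{gl}_{|\alpha|}(k)$ the stated subalgebra. Taking the product over all $[\alpha] \in (\overline{Q_C})_1$ yields the claimed isomorphism. The only point requiring a little care — and the place I expect the main (minor) obstacle — is the hypothesis management around loops: Lemma~\ref{lem:nkron} explicitly excludes the characteristic-$2$ case with a single loop and $m = 0$, so one must check that the blanket hypothesis ``$Q_A$ is not the quiver with one loop'' rules this out for every class $[\alpha]$. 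A class $[\alpha]$ with $|\alpha''| = 1$ that is a loop would give a one-loop Kronecker/bouquet factor; the global hypothesis forbids $Q_A$ itself from being exactly this quiver, but in principle $Q_A$ could be larger and still contain a single-loop parallelism class. I would either argue that such a factor contributes the harmless $\mathfrak{gl}_1(k) = k$ with $\mathfrak{I}_\alpha$ possibly nonzero (so the formula still holds, the characteristic-$2$ subtlety of Lemma~\ref{lem:nkron} being about $\mathrm{Ker}(\delta^1)$ containing the extra generator $\alpha \parall e$, which is excluded here because $e \not\parallel \alpha$ once $|\alpha''| \geq 1$ is read correctly), or note that in the intended application $Q_A$ has at most one loop at each vertex so these factors are exactly $\mathfrak{gl}_1(k)$; in any case the decomposition argument is insensitive to it, and the restriction on $Q_A$ in the statement is precisely what guarantees each factor falls under Lemma~\ref{lem:nkron}.
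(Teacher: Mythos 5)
Your overall strategy is exactly the paper's: split $\mathrm{Ker}(\delta^1_{A|B})$ along the parallelism classes of $(\overline{Q_C})_1$, observe that elements supported on different classes commute, and then quote Lemma~\ref{lem:nkron} for each Kronecker/bouquet factor. However, your identification of the kernel rests on two false statements, and they hit precisely the delicate point the hypotheses of the proposition are there to control. First, for a radical square zero algebra the relation set $Z$ is \emph{not} empty: it consists of \emph{all} paths of length two (if $Z$ were empty, $A$ would be a path algebra, not $kQ/J^2$). The correct reason an arrow--arrow pair $\alpha_i\parall\alpha_j$ lies in $\mathrm{Ker}(\delta^1_A)$ is that substituting an arrow for an arrow in a length-two relation produces a length-two path, which does not lie in $\mathcal{B}$, so $r^{\alpha_i\parall\alpha_j}=0$. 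Second, the trivial path $e$ \emph{is} parallel to a loop $\alpha$ at $e$, so $\alpha\parall e$ is a genuine basis element of $k((Q_A)_1\parall\mathcal{B})$; your claim that ``$e$ is not parallel to an arrow'' cannot be used to discard it. Whether $\alpha\parall e$ lies in the kernel is exactly the subtle issue: taken literally, your argument would show it never does, which contradicts Remark~\ref{rem:char2loop} (in characteristic $2$ with $Q_A$ a single loop, $\alpha\parall e\in\mathrm{Ker}(\delta^1)$), and it would also make the hypothesis ``$Q_A$ is not the quiver with one loop'' superfluous, whereas it is needed.

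The missing argument is the one the paper supplies: if $\alpha$ is a loop at $e$, then in characteristic different from $2$ the relation $\alpha^2\in Z$ gives $\delta^1(\alpha\parall e)\ni 2\alpha\neq 0$; in characteristic $2$ one uses that $A$ is connected, radical square zero, and $Q_A$ is not the one-loop quiver to find a second arrow $\beta$ incident to $e$, so that the relation $\alpha\beta$ or $\beta\alpha$ forces $\delta^1(\alpha\parall e)\neq 0$. Only after excluding these elements is the basis of $\mathrm{Ker}(\delta^1_{A|B})$ given by the parallel arrow pairs $\{\alpha_i\parall\alpha_j \mid \alpha_i\in Q_C,\ \alpha_j\in Q_A\}$, and then your class-by-class reduction to Lemma~\ref{lem:nkron} goes through verbatim (the paper notes that in characteristic $2$ a single-loop class contributes only $\alpha\parall\alpha$, respectively nothing if $\alpha\notin Q_C$, so the same formula holds). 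So the skeleton of your proof is right, but the loop/characteristic analysis, which is the actual content behind the hypothesis on $Q_A$, needs to be repaired along these lines rather than waved away by the incorrect parallelism claim.
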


\begin{proof}
  Assume the characteristic of the field is different from $2$. A $k$-basis for $\mathrm{Ker}(\delta^1_{A|B})$ is given by $\{\alpha_i\parall \alpha_j\ | \ \alpha_i  \in Q_C,\ \alpha_j \in Q_A\}$. In order to study the Lie algebra structure of $\mathrm{Ker}(\delta^1_{A|B})$ it is enough to restrict to the classes of parallel arrows, see \cite{Str} after Remark 4.8. In other words, $\mathrm{Ker}(\delta^1_{A|B})$ decomposes, as a Lie algebra, to the direct product of Lie algebras given in Lemma \ref{lem:nkron}, where the product is indexed by equivalence classes in $(\overline{Q_C})_1$. 

  By Lemma \ref{lem:nkron}, the arguments of the previous paragraph can be applied also if the characteristic of the field $k$ is $2$ with the exception of $A$ having at least one vertex with exactly one loop. For a fixed vertex $e$ having a loop $\alpha$, we have $\alpha\parall e \notin \mathrm{Ker}(\delta^1_{A|B})$. Indeed, since $A$ is connected and $Q_A$ is not the quiver with one loop, there exists a relation $\alpha\beta$ or $\beta \alpha$ which forces $\alpha\parall e \notin \mathrm{Ker}(\delta^1_{A|B})$. If $\alpha\in Q_C$, then the corresponding Lie algebra for the equivalence class of $\alpha$ is generated only by $\alpha\parall \alpha$. If $\alpha\notin Q_C$, then the Lie algebra is trivial. Hence, the same decomposition also holds in this case.
\end{proof}

\textbf{Notation.} For $[\alpha] \in (\overline{Q_C})_1$, we denote by $I_{\alpha}=\sum _{a\in [\alpha]} a\parall a$ and $kI_{\alpha}$ the corresponding $k$-vector space.

\begin{proposition}
  \label{prop:rad}
  Let $A$ be radical square zero algebra and let $B \subseteq A$ be a radical square zero subalgebra over a field $k$ of characteristic zero such that $Q_B$ is a subquiver of $Q_A$. Then 
  \[
    \rad(\mathrm{Ker}(\delta^1_{A|B})) \cong \prod_{[\alpha]\in (\overline{Q_C})_1} \mathfrak{I}_\alpha \rtimes kI_{\alpha}.
  \]
\end{proposition}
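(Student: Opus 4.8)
The strategy is to use the product decomposition from Proposition \ref{prop:kerrad} to reduce the computation of the radical to each factor separately, and then to identify the radical of a single factor $\mathfrak{I}_\alpha \rtimes \mathfrak{gl}_{|\alpha|}(k)$. The key observation is that the radical of a finite direct product of Lie algebras is the product of the radicals, since a maximal solvable ideal in $\prod \mathfrak{g}_i$ projects to a solvable ideal in each $\mathfrak{g}_i$ and conversely the product of the radicals is a solvable ideal. So it suffices to show that $\rad(\mathfrak{I}_\alpha \rtimes \mathfrak{gl}_{|\alpha|}(k)) \cong \mathfrak{I}_\alpha \rtimes kI_\alpha$ for each equivalence class $[\alpha] \in (\overline{Q_C})_1$.

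\textbf{The single-factor computation.} Fix $[\alpha]$ and write $r = |\alpha|$ and $\mathfrak{g} = \mathfrak{I}_\alpha \rtimes \mathfrak{gl}_r(k)$. This is precisely the Lie algebra $\mathfrak{g} \subseteq \mathfrak{gl}_{|\alpha''|}$ of matrices whose first $|\alpha'|$ rows vanish, as analysed in Example \ref{Example:deleteRows} (with $n = |\alpha''|$, $m = |\alpha'|$): there it is shown that $\rad(\mathfrak{g}) = (\mathcal{A},0) + \rad(\mathfrak{gl}_m)$, i.e. $\mathfrak{I}_\alpha \rtimes \rad(\mathfrak{gl}_r(k))$. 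In characteristic zero, $\rad(\mathfrak{gl}_r(k)) = kI$ is the one-dimensional centre spanned by the identity matrix, since $\mathfrak{gl}_r(k) = \mathfrak{sl}_r(k) \oplus kI$ is a Levi decomposition with $\mathfrak{sl}_r(k)$ semisimple. Under the identification of the $\mathfrak{gl}_r(k)$-summand with $\{\alpha_i \parall \alpha_j : \alpha_i,\alpha_j \in [\alpha]\}$, the identity matrix corresponds to $\sum_{a \in [\alpha]} a\parall a = I_\alpha$, so $\rad(\mathfrak{g}) \cong \mathfrak{I}_\alpha \rtimes kI_\alpha$ as desired. Assembling over all classes gives the stated formula.

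\textbf{Remaining points and the main obstacle.} Two small gaps must be closed. First, one should check that the hypothesis of Proposition \ref{prop:kerrad} is met, i.e.\ that $Q_A$ is not the quiver with one loop; but if it were, $\HH^1(A|B)$ and hence $\mathrm{Ker}(\delta^1_{A|B})$ would be at most one-dimensional (abelian) by Lemma \ref{lem:nkron}, the product on the right side is empty or one-dimensional, and the statement holds trivially, so this degenerate case can be dispatched in one line or excluded outright. Second, one must verify that the identification of $\rad(\mathfrak{gl}_r(k))$ with $kI_\alpha$ is compatible with the semidirect-product structure, i.e.\ that $\mathfrak{I}_\alpha \rtimes kI_\alpha$ really is an ideal of $\mathfrak{g}$ and is solvable — but this is exactly the content of the penultimate sentences of Example \ref{Example:deleteRows} ($(\mathcal{A},0) + (\mathfrak{h},0)$ is an ideal and is solvable iff $\mathfrak{h}$ is), applied with $\mathfrak{h} = kI$. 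The main (and only mild) obstacle is bookkeeping: tracking the three-way indexing $[\alpha] \in \overline{Q_C}$, $[\alpha'] \in \overline{Q_B}$, $[\alpha''] \in \overline{Q_A}$ through the isomorphism of Proposition \ref{prop:kerrad} so that the identity element of each $\mathfrak{gl}_{|\alpha|}(k)$ is correctly matched with $I_\alpha = \sum_{a \in [\alpha]} a\parall a$ rather than with a sum over the larger class $[\alpha'']$. Everything else is a direct invocation of Example \ref{Example:deleteRows} and the product-of-radicals fact.
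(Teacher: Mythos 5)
Your proof is correct and takes essentially the same route as the paper: reduce via the product decomposition of Proposition \ref{prop:kerrad} and then apply Example \ref{Example:deleteRows} factor by factor, identifying $kI_\alpha$ with the scalar matrices, which form $\rad(\mathfrak{gl}_{|\alpha|}(k))$ in characteristic zero. Your explicit treatment of the radical-of-a-product step and of the one-loop case excluded in Proposition \ref{prop:kerrad} only spells out details the paper's proof leaves implicit.
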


\begin{proof}
  Let $[\alpha]$ be an equivalence class in $ (\overline{Q_C})_1$, we denote by $[\alpha'], [\alpha'']$ the corresponding equivalence classes in $(\overline{Q_B})_1, (\overline{Q_A})_1$, respectively. Set $n=|\alpha''|$ and $m=|\alpha'|$. Then, using the same notation as in Example \ref{Example:deleteRows}, we have that $\mathfrak{I}_\alpha\cong \mathcal{A}$. In addition, $ kI_{\alpha}$ is isomorphic to the space of scalar matrices. The statement follows from Example \ref{Example:deleteRows}. 
\end{proof}

\begin{proposition}
  Let $A$ be a radical square zero algebra and let $B \subseteq A$ be a radical square zero subalgebra over a field $k$ of characteristic zero such that $Q_B$ is a subquiver of $Q_A$. Then $\mathrm{Im}(\delta^0_{A|B})$ is an abelian Lie ideal of $\mathrm{R}(\mathrm{Ker}(\delta^1_{A|B}))$. 
\end{proposition}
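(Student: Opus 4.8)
The plan is to leverage the structure results already established for $\mathrm{Ker}(\delta^1_{A|B})$ in Propositions \ref{prop:kerrad} and \ref{prop:rad}, and to identify where $\mathrm{Im}(\delta^0_{A|B})$ sits inside the radical. First I would recall from Remark \ref{rem:relcomb} that every element of $\mathrm{Im}(\delta^0_{A|B})$ is a linear combination of the images $\delta^0(e\parall e) = \sum_{s(a)=e} a\parall a - \sum_{t(a)=e} a\parall a$ for vertices $e$, restricted to those contributions supported on arrows $a\notin (Q_B)_1$; since $A$ has radical square zero, the only paths of length one parallel to an arrow $a$ are arrows in the same equivalence class $[a]$, so $\delta^0(e\parall e)$ lies in $\prod_{[\alpha]} kI_\alpha \oplus (\text{terms from }Q_B)$. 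Projecting onto the relative part, $\mathrm{Im}(\delta^0_{A|B})$ is a subspace of $\prod_{[\alpha]\in(\overline{Q_C})_1} kI_\alpha$ — in particular it is abelian, being a subspace of an abelian Lie algebra (the $I_\alpha$ are the scalar matrices, which commute with each other and lie in distinct factors of the product).

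Next I would verify that $\mathrm{Im}(\delta^0_{A|B}) \subseteq \rad(\mathrm{Ker}(\delta^1_{A|B}))$. By Lemma \ref{relcomb}, $\mathrm{Im}(\delta^0_{A|B})$ is a Lie ideal of $\mathrm{Ker}(\delta^1_{A|B})$; since it is abelian, it is a solvable ideal, hence contained in the radical. Combining with the previous paragraph and Proposition \ref{prop:rad}, which identifies $\rad(\mathrm{Ker}(\delta^1_{A|B})) \cong \prod_{[\alpha]} \mathfrak{I}_\alpha \rtimes kI_\alpha$, we see $\mathrm{Im}(\delta^0_{A|B})$ lands in the $\prod_{[\alpha]} kI_\alpha$ part of the radical.

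The remaining point is that $\mathrm{Im}(\delta^0_{A|B})$ is an ideal of $\rad(\mathrm{Ker}(\delta^1_{A|B}))$, not merely of the ambient algebra. Since $\mathrm{Im}(\delta^0_{A|B})$ is already an ideal of $\mathrm{Ker}(\delta^1_{A|B})$ by Lemma \ref{relcomb}, and $\rad(\mathrm{Ker}(\delta^1_{A|B}))$ is a subalgebra of $\mathrm{Ker}(\delta^1_{A|B})$, the bracket $[\rad(\mathrm{Ker}(\delta^1_{A|B})), \mathrm{Im}(\delta^0_{A|B})] \subseteq [\mathrm{Ker}(\delta^1_{A|B}), \mathrm{Im}(\delta^0_{A|B})] \subseteq \mathrm{Im}(\delta^0_{A|B})$, which is exactly the ideal condition. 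So the ideal claim is essentially automatic once we know it is an ideal of the bigger algebra.

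The main obstacle — really the only one requiring care — is the computation showing $\delta^0(e\parall e)$, when restricted to arrows outside $Q_B$, produces only "diagonal" terms $a\parall a$ and in particular nothing outside $\prod_{[\alpha]} kI_\alpha$; one must handle loops and the case of vertices adjacent to both $Q_B$- and $Q_C$-arrows, and check that the characteristic-zero hypothesis (inherited from Proposition \ref{prop:rad}) suffices so that no degenerate behaviour with loops in characteristic $2$ intrudes. Once that bookkeeping is done, abelianness is immediate from the product-of-scalar-matrices description, and the ideal property follows formally as above.
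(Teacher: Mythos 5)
Your proposal is correct, and its overall skeleton matches the paper's: both use Lemma \ref{relcomb} to get that $\mathrm{Im}(\delta^0_{A|B})$ is a Lie ideal of $\mathrm{Ker}(\delta^1_{A|B})$, and both then observe that an abelian (hence solvable) ideal lies in the radical and is automatically an ideal of it, since $[\rad(\mathrm{Ker}(\delta^1_{A|B})),\mathrm{Im}(\delta^0_{A|B})]\subseteq[\mathrm{Ker}(\delta^1_{A|B}),\mathrm{Im}(\delta^0_{A|B})]\subseteq\mathrm{Im}(\delta^0_{A|B})$. The one genuine difference is where abelianness comes from: the paper imports it from the absolute case, citing \cite[Lemma 2.6]{Sanchez} (that $\mathrm{Im}(\delta^0_A)$ is an abelian ideal of $\rad(\mathrm{Ker}(\delta^1_A))$, with a separate remark for the oriented-cycle case) and then restricts along $\mathrm{Im}(\delta^0_{A|B})\subseteq\mathrm{Im}(\delta^0_A)$, whereas you prove it directly by computing the image of $\delta^0$ in the radical square zero setting. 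Your route is more self-contained and avoids the oriented-cycle case distinction, and the description you obtain (the relative image sits inside the span of the diagonal elements $I_\alpha$ for classes of $Q_C$, in fact only those classes disjoint from $Q_B$) is exactly the input the paper needs again in Theorem \ref{radsqrzero}; the cost is that you redo, in a special case, what the cited lemma already provides. One remark: the ``bookkeeping'' you flag as the main remaining obstacle is genuinely routine and characteristic-free. Since $A$ is radical square zero, the generators of $k(Q_0\parall\mathcal{B})$ are $e\parall e$ and $e\parall\ell$ for loops $\ell$, and $\delta^0(e\parall\ell)=0$ because every length-two path is a relation; moreover the coefficient of $a\parall a$ in $\sum_e\lambda_e\,\delta^0(e\parall e)$ is $\lambda_{s(a)}-\lambda_{t(a)}$, which is constant on parallel classes (loops contribute nothing), so vanishing on the arrows of $Q_B$ forces the element into the span of the $I_\alpha$ with $[\alpha]$ disjoint from $Q_B$, and diagonal elements commute under the Strametz bracket. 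The characteristic-zero hypothesis is not needed for this computation (it is only needed so that the ambient structural statements such as Proposition \ref{prop:rad} apply), so your proof closes without issue.
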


\begin{proof}
  By Remark \ref{rem:relcomb} we have $\mathrm{Im}(\delta^0_{A|B})\subseteq \mathrm{Im}(\delta^0_{A})$ and by Lemma \ref{relcomb}, $\mathrm{Im}(\delta^0_{A|B})$ is a Lie  ideal of $\mathrm{Ker}(\delta^1_{A|B})$. If $Q_A$ is not an oriented cycle, then by \cite[Lemma 2.6]{Sanchez} we have that $\mathrm{Im}(\delta^0_{A})$ is an abelian ideal of $\mathrm{R}(\mathrm{Ker}(\delta^1_{A}))$.  By the same argument as in \cite[Lemma 2.6]{Sanchez}, $\mathrm{Im}(\delta^0_{A})$ is an abelian ideal of $\mathrm{R}(\mathrm{Ker}(\delta^1_{A}))$ also in the case when $Q_A$ is an oriented cycle. Hence $\mathrm{Im}(\delta^0_{A|B})$ is an abelian ideal of $\mathrm{Ker}(\delta^1_{A|B})$. In particular, $\mathrm{Im}(\delta^0_{A|B})$ is an abelian ideal of $\mathrm{R}(\mathrm{Ker}(\delta^1_{A|B}))$. 
\end{proof}

We recall a basic result of Lie algebras \cite[Lemma 2.7]{Sanchez}:

\begin{lemma}
  \label{lem:solv}
  Let $\mathfrak{g}$ be a Lie algebra and $\mathfrak{J}$ a solvable ideal of $\mathfrak{g}$. Then $\mathrm{R}(\mathfrak{g}/\mathfrak{J})=\mathrm{R}(\mathfrak{g})/\mathfrak{J}$.
\end{lemma}

We denote by
\[
  S= \{[\alpha]\in(\overline{Q_C})_1\  |\ |\alpha|>1\}
\]
and we call it the parallel complement.

\begin{proposition}\label{prop:semisimplerad2}
  Let $A$ be a radical square zero algebra and let $B \subseteq A$ be a radical square zero subalgebra over a field $k$ of characteristic zero such that $Q_B$ is a subquiver of $Q_A$. Then \[
    \HH^1(A|B)/ \rad( \HH^1(A|B))\cong \prod_{[\alpha]\in S} \mathfrak{sl}_{|\alpha|}(k) 
  \]
\end{proposition}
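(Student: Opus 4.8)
The plan is to combine the explicit description of $\mathrm{Ker}(\delta^1_{A|B})$ from Proposition \ref{prop:kerrad}, the description of its radical from Proposition \ref{prop:rad}, and the fact that $\mathrm{Im}(\delta^0_{A|B})$ is a solvable (indeed abelian) ideal contained in $\rad(\mathrm{Ker}(\delta^1_{A|B}))$, together with the quotient formula for radicals in Lemma \ref{lem:solv}. First I would note that by Lemma \ref{relcomb} we have $\HH^1(A|B)\cong \mathrm{Ker}(\delta^1_{A|B})/\mathrm{Im}(\delta^0_{A|B})$ as Lie algebras, and by the previous proposition $\mathrm{Im}(\delta^0_{A|B})$ is an abelian, hence solvable, ideal of $\mathrm{Ker}(\delta^1_{A|B})$. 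Applying Lemma \ref{lem:solv} with $\mathfrak{g}=\mathrm{Ker}(\delta^1_{A|B})$ and $\mathfrak{J}=\mathrm{Im}(\delta^0_{A|B})$ yields
\[
  \rad\bigl(\HH^1(A|B)\bigr)=\rad\bigl(\mathrm{Ker}(\delta^1_{A|B})\bigr)/\mathrm{Im}(\delta^0_{A|B}).
\]

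Next I would assemble the quotient. We must first treat the excluded case: if $Q_A$ is the quiver with one loop, then $\HH^1(A|B)$ is either $0$ or one-dimensional abelian (see Lemma \ref{lem:nkron} and Remark \ref{rem:char2loop}), $S=\emptyset$, and the statement reads $0\cong 0$, so we may assume $Q_A$ is not the quiver with one loop and invoke Propositions \ref{prop:kerrad} and \ref{prop:rad}. Then
\[
  \frac{\mathrm{Ker}(\delta^1_{A|B})}{\rad(\mathrm{Ker}(\delta^1_{A|B}))}
  \cong \frac{\prod_{[\alpha]\in (\overline{Q_C})_1} \mathfrak{I}_\alpha \rtimes \mathfrak{gl}_{|\alpha|}(k)}{\prod_{[\alpha]\in (\overline{Q_C})_1} \mathfrak{I}_\alpha \rtimes kI_\alpha}
  \cong \prod_{[\alpha]\in (\overline{Q_C})_1} \frac{\mathfrak{gl}_{|\alpha|}(k)}{kI_\alpha}
  \cong \prod_{[\alpha]\in (\overline{Q_C})_1} \mathfrak{sl}_{|\alpha|}(k),
\]
where in the last two steps I use that $\mathfrak{I}_\alpha$ cancels in each factor (it sits in the numerator and denominator identically), that $kI_\alpha$ corresponds to the scalar matrices so that $\mathfrak{gl}_{|\alpha|}(k)/kI_\alpha\cong\mathfrak{pgl}_{|\alpha|}(k)\cong\mathfrak{sl}_{|\alpha|}(k)$ in characteristic zero, and that $\mathfrak{sl}_1(k)=0$, so only the classes with $|\alpha|>1$, i.e.\ those in $S$, contribute a nonzero factor. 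Combining this with the display from the first paragraph — namely that $\HH^1(A|B)/\rad(\HH^1(A|B))\cong \mathrm{Ker}(\delta^1_{A|B})/\rad(\mathrm{Ker}(\delta^1_{A|B}))$ by the third isomorphism theorem — gives the claimed isomorphism $\HH^1(A|B)/\rad(\HH^1(A|B))\cong \prod_{[\alpha]\in S}\mathfrak{sl}_{|\alpha|}(k)$.

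The main obstacle, which is really more of a bookkeeping subtlety than a deep difficulty, is making precise that the semidirect-product decompositions of Proposition \ref{prop:kerrad} and Proposition \ref{prop:rad} are \emph{compatible}, so that quotienting the big product by the radical product really does cancel the $\mathfrak{I}_\alpha$'s factor-by-factor and leave $\mathfrak{gl}_{|\alpha|}(k)/kI_\alpha$ in each slot. This comes down to the computation already carried out in Example \ref{Example:deleteRows}: there $\rad(\mathfrak{g})=(\mathcal{A},0)+\rad(\mathfrak{gl}_m)$ for the matrix Lie algebra $\mathfrak{g}$ with the first $m$ rows deleted, and the quotient $\mathfrak{g}/\rad(\mathfrak{g})$ is visibly $\mathfrak{gl}_{n-m}(k)/\rad(\mathfrak{gl}_{n-m}(k))\cong\mathfrak{sl}_{n-m}(k)$. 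Since $\mathfrak{I}_\alpha$ plays the role of $(\mathcal{A},0)$ and $\mathfrak{gl}_{|\alpha|}(k)$ the role of the $\mathcal{B}$-block (with $|\alpha|=|\alpha''|-|\alpha'|=n-m$), the cancellation is exactly the one established there, and the product over $[\alpha]$ presents no further issue because radicals and quotients commute with finite direct products of Lie algebras. One should also double-check the characteristic-$2$ caveats in Propositions \ref{prop:kerrad}–\ref{prop:rad} are irrelevant here since we are assuming $\mathrm{char}\,k=0$.
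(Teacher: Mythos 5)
Your argument is correct and follows essentially the same route as the paper: identify $\HH^1(A|B)$ with $\mathrm{Ker}(\delta^1_{A|B})/\mathrm{Im}(\delta^0_{A|B})$, use Lemma \ref{lem:solv} together with the third isomorphism theorem to reduce to $\mathrm{Ker}(\delta^1_{A|B})/\rad(\mathrm{Ker}(\delta^1_{A|B}))$, and then compute this quotient factor-by-factor from Propositions \ref{prop:kerrad} and \ref{prop:rad}, with $\mathfrak{sl}_1=0$ accounting for the restriction to $S$. Your explicit treatment of the excluded one-loop case and of the $\mathfrak{gl}/kI\cong\mathfrak{sl}$ cancellation merely spells out details the paper leaves implicit.
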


\begin{proof}
  By Lemma \ref{lem:solv} and the third isomorphism theorem we have that the semisimple part of $\HH^1(A|B)$ is isomorphic to $\mathrm{Ker}(\delta^1_{A|B})$ quotient by $\rad (\mathrm{Ker}(\delta^1_{A|B}))$. By Proposition \ref{prop:rad} and Proposition \ref{prop:kerrad} the statement follows. 
\end{proof}

The following proposition could also be proved using Theorem \ref{radsqrzero}, however, we use a more indirect method.

\begin{proposition}\label{prop:HH1rad2ab}
  Let $A, B$ be radical square zero algebras. Assume $Q_B$ is a subquiver of $Q_A$ such that $Q_C$ has no parallel arrows and at most one loop for each vertex. 
  Then the derived subalgebra of $\HH^1(A|B)$ is abelian.
\end{proposition}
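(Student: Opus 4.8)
The plan is to deduce this from the structural results of this subsection, as announced before the statement. First I would apply Lemma \ref{relcomb} to identify $\HH^1(A|B)$ with the quotient Lie algebra $\mathrm{Ker}(\delta^1_{A|B})/\mathrm{Im}(\delta^0_{A|B})$. The canonical surjection $\pi\colon\mathrm{Ker}(\delta^1_{A|B})\twoheadrightarrow\HH^1(A|B)$ is a morphism of Lie algebras, so $\pi$ sends $[\mathrm{Ker}(\delta^1_{A|B}),\mathrm{Ker}(\delta^1_{A|B})]$ onto the derived subalgebra of $\HH^1(A|B)$; since the image of an abelian Lie algebra under a Lie algebra morphism is abelian, it therefore suffices to show that $[\mathrm{Ker}(\delta^1_{A|B}),\mathrm{Ker}(\delta^1_{A|B})]$ is abelian.

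Assuming first that $Q_A$ is not the one-loop quiver, I would invoke Proposition \ref{prop:kerrad} to write
\[
  \mathrm{Ker}(\delta^1_{A|B})\cong\prod_{[\alpha]\in(\overline{Q_C})_1}\mathfrak{I}_\alpha\rtimes\mathfrak{gl}_{|\alpha|}(k).
\]
Because $Q_C$ has no parallel arrows and at most one loop at each vertex, every class $[\alpha]\in(\overline{Q_C})_1$ has $|\alpha|=1$, so $\mathfrak{gl}_{|\alpha|}(k)=k$; moreover, writing $[\alpha'],[\alpha'']$ for the corresponding classes in $\overline{Q_B}$ and $\overline{Q_A}$ and using $(Q_A)_1=(Q_B)_1\sqcup(Q_C)_1$, one gets $|\alpha''|=|\alpha'|+1$. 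Hence, via the reduction to parallel classes used in the proof of Proposition \ref{prop:kerrad}, each factor $\mathfrak{I}_\alpha\rtimes k$ is exactly the Lie algebra $\mathrm{Ker}(\delta^1_{A'|B'})$ attached to the pair formed by the $|\alpha''|$-Kronecker quiver (resp. the $|\alpha''|$-bouquet) and the $(|\alpha''|-1)$-Kronecker quiver (resp. the $(|\alpha''|-1)$-bouquet). By Lemma \ref{lem:abelianrad2} the derived subalgebra of each such factor is abelian. Since the derived subalgebra of a finite direct product of Lie algebras is the direct product of the derived subalgebras, $[\mathrm{Ker}(\delta^1_{A|B}),\mathrm{Ker}(\delta^1_{A|B})]$ is a direct product of abelian Lie algebras, hence abelian; together with the previous paragraph this settles the claim in this case.

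It then remains to dispose of the excluded case where $Q_A$ is the one-loop quiver. If $Q_B=Q_A$ then $\HH^1(A|B)=0$; otherwise $Q_B$ has no arrows, and by Lemma \ref{lem:nkron} and Remark \ref{rem:char2loop} the Lie algebra $\mathrm{Ker}(\delta^1_{A|B})$ has dimension at most $2$, so its derived subalgebra has dimension at most $1$ and is abelian -- hence so is its image in $\HH^1(A|B)$.

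The argument is essentially a bookkeeping assembly of earlier results, so I do not expect a serious obstacle; the only points requiring care are the numerical checks that the hypotheses on $Q_C$ force $|\alpha|=1$ and $|\alpha''|=|\alpha'|+1$ (so that Lemma \ref{lem:abelianrad2} really does apply to each parallel-class factor), and remembering to treat the characteristic-two one-loop situation via Remark \ref{rem:char2loop}.
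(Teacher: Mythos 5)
Your proof is correct and takes essentially the same route as the paper's: both reduce via the canonical surjection $\mathrm{Ker}(\delta^1_{A|B})\twoheadrightarrow \HH^1(A|B)$ to showing the derived subalgebra of $\mathrm{Ker}(\delta^1_{A|B})$ is abelian, and both obtain this from the decomposition of Proposition~\ref{prop:kerrad} together with the bracket computation behind Lemma~\ref{lem:abelianrad2}, the hypothesis on $Q_C$ forcing $|\alpha|=1$ so that each factor is $\mathfrak{I}_\alpha\rtimes k$ with abelian derived subalgebra $\mathfrak{I}_\alpha$. Your explicit dimension-count treatment of the excluded one-loop quiver (in all characteristics) is marginally more careful than the paper's brief characteristic-two remark, but it is the same argument in substance.
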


\begin{proof}
  Let $A, B$ be radical square zero algebras such that $Q_A$ is not a quiver with one loop and $Q_B$ is a subquiver of $Q_A$.    It follows from Proposition \ref{prop:kerrad} that
  \[
    [\mathrm{Ker}(\delta^1_{A|B}), \mathrm{Ker}(\delta^1_{A|B})]=\prod_{[\alpha],[\beta]\in (\overline{Q_C})_1} [ \mathfrak{I}_\alpha \rtimes k, \mathfrak{I}_\beta \rtimes k]=\prod_{[\alpha]\in (\overline{Q_C})_1} \mathfrak{I}_{\alpha}
  \] 
  which is an abelian Lie algebra.  Note that the last equality follows from the proof of Lemma \ref{lem:abelianrad2}.
  If the characteristic of the field is $2$ and $Q_A$ is a quiver with one loop and one vertex then by Lemma \ref{lem:abelianrad2} the derived subalgebra of $\mathrm{Ker}(\delta^1_{A|B})$ is abelian.
  Let $\varphi: \mathrm{Ker}(\delta^1_{A|B})\twoheadrightarrow \HH^1(A|B)$ be the canonical projection. Since $\varphi$ is a surjective Lie algebra homomorphism,  the image of the derived subalgebra of  $\mathrm{Ker}(\delta^1_{A|B})$ is equal to the derived subalgebra of $\HH^1(A|B)$, see \cite[Exercise 4.1]{Erdmann}.
  Since the derived subalgebra of $\mathrm{Ker}(\delta^1_{A|B})$ is abelian it follows that the derived subalgebra of $\HH^1(A|B)$ is abelian.
\end{proof}

Let $[\alpha]$ be an equivalence class in $(\overline{Q_C})_1$. Then the corresponding equivalence class $[\alpha'']$ in $(\overline{Q_A})_1$  either contains no arrows in $Q_B$ or contains at least one arrow in $Q_B$. The set of equivalence classes in $(\overline{Q_C})_1$ of the first kind are denoted by $\overline{D}$, while the set of equivalence classes of the second kind are denoted by $\overline{E}$.

\begin{theorem}\label{radsqrzero}
  Let $A$ be a radical square zero algebra and let $B \subseteq A$ be a radical square zero subalgebra over a field $k$ of characteristic zero such that $Q_B$ is a subquiver of $Q_A$. Then
  \[
    \HH^1(A|B)\cong \prod_{[\alpha] \in S} \mathfrak{sl}_{|\alpha|}(k)\rtimes\left(\left(\prod_{[\alpha]\in \overline{E} }\mathfrak{I}_{\alpha}\rtimes kI_{\alpha} \right)\times \mathfrak{a} \right),
  \]
  where $\mathfrak{a}=(\prod_{[\alpha]\in \overline{D}} kI_{\alpha})/\mathrm{Im}
  (\delta^0_{A|B})$ is an abelian Lie subalgebra.
\end{theorem}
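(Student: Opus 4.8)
The plan is to read the statement off the Levi decomposition of $\HH^1(A|B)$, feeding in the structural results already established for $\mathrm{Ker}(\delta^1_{A|B})$. First I would dispose of the degenerate case where $Q_A$ is the quiver with a single loop: there either $B=A$ and both sides vanish, or $B$ is the subalgebra on the idempotent, in which case $\HH^1(A|B)\cong\HH^1(A)\cong k$ while $S=\overline{E}=\emptyset$, $\overline{D}$ is the loop class and $\mathrm{Im}(\delta^0_{A|B})=0$, so the right-hand side is also $k$. Assume henceforth that $Q_A$ is not this quiver, so that Propositions \ref{prop:kerrad}, \ref{prop:rad} and \ref{prop:semisimplerad2} apply.

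Next I would invoke Theorem \ref{thm:Levi} to get $\HH^1(A|B)\cong \HH^1(A|B)/\rad(\HH^1(A|B))\ltimes\rad(\HH^1(A|B))$. Proposition \ref{prop:semisimplerad2} identifies the Levi factor with $\prod_{[\alpha]\in S}\mathfrak{sl}_{|\alpha|}(k)$, which is exactly the outer factor of the statement, so it remains to compute the radical. Since $\mathrm{Im}(\delta^0_{A|B})$ is an abelian, hence solvable, ideal of $\mathrm{Ker}(\delta^1_{A|B})$, Lemma \ref{lem:solv} gives $\rad(\HH^1(A|B))=\rad(\mathrm{Ker}(\delta^1_{A|B}))/\mathrm{Im}(\delta^0_{A|B})$, and Proposition \ref{prop:rad} identifies $\rad(\mathrm{Ker}(\delta^1_{A|B}))$ with the direct product $\prod_{[\alpha]\in(\overline{Q_C})_1}\mathfrak{I}_\alpha\rtimes kI_\alpha$ over parallel classes.

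The decisive step --- and the place I expect the real work to be --- is locating $\mathrm{Im}(\delta^0_{A|B})$ inside this product. Since $A$ is radical square zero, $\delta^0(e\parall\ell)=0$ for every loop $\ell$ (the relevant paths have length two), so by Proposition \ref{Stra2.6} the space $\mathrm{Im}(\delta^0_A)$ is spanned by the elements $\delta^0(e\parall e)=\sum_{s(a)=e}a\parall a-\sum_{t(a)=e}a\parall a$. Hence in a combination $\sum_{e\in Q_0}\lambda_e\,\delta^0(e\parall e)$ the coefficient of $a\parall a$ is $\lambda_{s(a)}-\lambda_{t(a)}$, which depends only on the parallel class of $a$; restricting such a combination to a class $[\alpha]\in(\overline{Q_C})_1$, whose arrows share a common source $s$ and target $t$, produces the element $(\lambda_s-\lambda_t)\,I_\alpha\in kI_\alpha$. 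The combination lies in $\mathrm{Ker}(\delta^1_{A|B})$, i.e.\ vanishes on every arrow of $Q_B$, exactly when $\lambda$ is constant on each connected component of $Q_B$; and if $[\alpha]\in\overline{E}$ then the $Q_A$-class of $[\alpha]$ contains an arrow of $Q_B$ between $s$ and $t$, forcing $\lambda_s=\lambda_t$ and hence killing the $[\alpha]$-component. This yields $\mathrm{Im}(\delta^0_{A|B})\subseteq\prod_{[\alpha]\in\overline{D}}kI_\alpha$.

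Finally I would split the product of Proposition \ref{prop:rad} along the partition $(\overline{Q_C})_1=\overline{D}\sqcup\overline{E}$: for $[\alpha]\in\overline{D}$ the class $[\alpha']$ in $\overline{Q_B}$ is trivial, so $\mathfrak{I}_\alpha=0$ and the factor is $kI_\alpha$, whereas for $[\alpha]\in\overline{E}$ the factor is $\mathfrak{I}_\alpha\rtimes kI_\alpha$. Since $\mathrm{Im}(\delta^0_{A|B})$ lies in the direct factor $\prod_{[\alpha]\in\overline{D}}kI_\alpha$, dividing it out gives
\[
\rad(\HH^1(A|B))\cong\Bigl(\prod_{[\alpha]\in\overline{E}}\mathfrak{I}_\alpha\rtimes kI_\alpha\Bigr)\times\mathfrak{a},\qquad\mathfrak{a}=\Bigl(\prod_{[\alpha]\in\overline{D}}kI_\alpha\Bigr)/\mathrm{Im}(\delta^0_{A|B}),
\]
with $\mathfrak{a}$ abelian, being a quotient of an abelian Lie algebra; combined with the Levi factor this is the displayed isomorphism. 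Apart from the class-tracking just described, the only subtlety is to keep in mind the splitting $\mathfrak{gl}_{|\alpha|}(k)=\mathfrak{sl}_{|\alpha|}(k)\oplus kI_\alpha$ behind Proposition \ref{prop:semisimplerad2}, so that the brackets between the Levi factor and the radical come out as written.
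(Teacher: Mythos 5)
Your proposal is correct and follows essentially the same route as the paper: Levi decomposition together with Proposition \ref{prop:semisimplerad2} for the semisimple part, then Proposition \ref{prop:rad} and the observation that $\mathrm{Im}(\delta^0_{A|B})$ lands in $\prod_{[\alpha]\in\overline{D}}kI_\alpha$ (the paper phrases this as there being no relative inner derivation attached to classes in $\overline{E}$) to split the radical as $\bigl(\prod_{[\alpha]\in\overline{E}}\mathfrak{I}_\alpha\rtimes kI_\alpha\bigr)\times\mathfrak{a}$. Your explicit coefficient computation $\lambda_{s(a)}-\lambda_{t(a)}$ and the separate treatment of the one-loop quiver are just more detailed versions of steps the paper leaves implicit.
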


\begin{proof}
  By Proposition \ref{prop:semisimplerad2} and the Levi decomposition, see Theorem \ref{thm:Levi}, we just need to describe the radical of $\HH^1(A|B)$.  
  The radical of $\mathrm{Ker}(\delta^1_{A|B})$ decomposes as the direct product of $\mathfrak{g}_1:=\prod_{[\alpha]\in \overline{E} }\mathfrak{I}_{\alpha}\rtimes kI_{\alpha}$ and $\mathfrak{g}_2:= \prod_{[\alpha]\in \overline{D}} kI_{\alpha}$.
  If $[\alpha]$ is in $\overline{E}$ then there will be no corresponding relative inner derivation coming from the source or the target of $\alpha$. Hence, 
  \[
    \rad (\mathrm{Ker}(\delta^1_{A|B}))/\mathrm{Im}
    (\delta^0_{A|B})\cong \mathfrak{g}_1 \times \mathfrak{g}_2/\mathrm{Im}
    (\delta^0_{A|B}).
  \]
  Clearly, the Lie algebra $\mathfrak{a}=\mathfrak{g}_2/\mathrm{Im}
  (\delta^0_{A|B})$ is abelian. 
\end{proof}

We provide now some examples. Recall that if $B$ is semisimple then $\HH^1(A|B)\cong \HH^1(A)$. The following example shows that the converse is not true in general.

\begin{example}
  Let $A$ be the radical square zero algebra 
  \[
    \begin{tikzcd}
      1 & 2 & 3
      \arrow["{\alpha_1}", shift left=2, from=1-1, to=1-2]
      \arrow["{\alpha_2}"', shift right=2, from=1-1, to=1-2]
      \arrow["\beta"', from=1-2, to=1-3]
    \end{tikzcd}
  \]
  and $B$ be
  \[
    \begin{tikzcd}
      1 & 2 & 3
      \arrow["\beta"', from=1-2, to=1-3]
    \end{tikzcd}
  \]
  Then $\HH^1(A|B)\cong \HH^1(A)$. Note that in this case $\overline{D}=\{[\alpha]\}$ and $\overline{E}=\varnothing$, where $[\alpha]=\{\alpha_1, \alpha_2\}$.
\end{example}

\begin{example}
  \label{Example2}
  Let $\A$ be the path algebra of the quiver 
  \begin{center}
    \begin{tikzpicture}[commutative diagrams/every diagram]
      \node at (0,0) {$1$};
      \node at (2,0) {$2$};
      \node at (4,0) {$3$};
      \node at (6,0) {$4$};
      
      \path
      (1.75,0.1) edge[->] node[anchor = south] {$\alpha_1$} (0.25,0.1)
      (1.75,-0.1) edge[->] node[anchor = north] {$\alpha_2$} (0.25,-0.1)
      (4.25,0.1) edge[->] node[anchor = south] {$\beta_1$} (5.75,0.1)
      (4.25,-0.1) edge[->] node[anchor = north] {$\beta_2$} (5.75,-0.1)
      (3.75,0.1) edge[->] node[anchor = south] {$\delta_1$} (2.25,0.1)
      (3.75,-0.1) edge[->] node[anchor = north] {$\delta_2$} (2.25,-0.1);
    \end{tikzpicture}
  \end{center}
  quotient by the ideal $I = (\alpha_i\delta_j)$ and let $B$ be the path algebra of the subquiver 
  \begin{center}
    \begin{tikzpicture}[commutative diagrams/every diagram]
      \node at (0,0) {$1$};
      \node at (2,0) {$2$};
      \node at (4,0) {$3$};
      \node at (6,0) {$4.$};
      
      \path
      (1.75,-0.1) edge[->] node[anchor = north] {$\alpha_2$} (0.25,-0.1)
      (4.25,0.1) edge[->] node[anchor = south] {$\beta_1$} (5.75,0.1)
      (4.25,-0.1) edge[->] node[anchor = north] {$\beta_2$} (5.75,-0.1);
    \end{tikzpicture}
  \end{center}

  Since $\mathrm{Im}(\delta^0_{A|B})$ is generated by $\delta_1\parall \delta_1+\delta_2\parall \delta_2$, then \(\HH^1(A|B) \cong \mathfrak{sl}_{|\delta|}(k) \times \mathfrak{I}_{|\alpha|}\rtimes \mathfrak{gl}_{|\alpha|}\cong \mathfrak{sl}_{2}(k) \times k\rtimes k\).
\end{example}

\section{Embedding and Lie structure for the first relative Hochschild cohomology}
\label{sec:embandsol}

In this section, we show that the first relative Hochschild cohomology embeds into the first Hochschild cohomology as a Lie algebra. We also give a sufficient condition for the solvability of the first relative Hochschild cohomology.

\begin{lemma}
  \label{lemma:emb}
  Let \(k\) be a field, $A$ a $k$-algebra and \(B \subseteq A\) be a \(k\)-subalgebra. Then we have an injective Lie algebra map
  \begin{equation*}
    \iota : \HH^1(A|B) \hookrightarrow \HH^1(A).
  \end{equation*}
\end{lemma}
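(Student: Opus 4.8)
The plan is to use the description from Remark \ref{relativeHH1}, which identifies $\HH^1(A|B)$ with $\Der_{B^e}(A)/\Inn_{B^e}(A)$ and $\HH^1(A)$ with $\Der_k(A)/\Inn_k(A)$, together with Lemma \ref{lem:subideal}, which already records that $\Der_{B^e}(A)$ is a Lie subalgebra of $\Der_k(A)$ and $\Inn_{B^e}(A) = \Inn_k(A) \cap \Der_{B^e}(A)$ is a Lie ideal of it. The inclusion $\Der_{B^e}(A) \hookrightarrow \Der_k(A)$ therefore descends to a Lie algebra homomorphism $\iota \colon \Der_{B^e}(A)/\Inn_{B^e}(A) \to \Der_k(A)/\Inn_k(A)$, i.e.\ $\iota \colon \HH^1(A|B) \to \HH^1(A)$, and the whole content of the lemma is the injectivity of this map.

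First I would set up the map precisely: a class in $\HH^1(A|B)$ is represented by a $k$-derivation $f \colon A \to A$ satisfying $f|_B = 0$ (using Lemma \ref{restB} to rephrase the $B^e$-linearity condition), and $\iota$ sends this class to the class of $f$ in $\HH^1(A)$. This is well-defined because $\Inn_{B^e}(A) \subseteq \Inn_k(A)$, and it is a Lie algebra map because the bracket on both sides is the commutator of derivations and the inclusion respects it. Then for injectivity I would take $f \in \Der_{B^e}(A)$ whose image is zero in $\HH^1(A)$, meaning $f = \operatorname{ad}_a$ for some $a \in A$, and I must show $f \in \Inn_{B^e}(A)$, i.e.\ that $\operatorname{ad}_a$ can be realised by an element commuting with all of $B$ (equivalently, that $\operatorname{ad}_a$ is itself a $B^e$-module homomorphism).

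The key step is this: since $f = \operatorname{ad}_a$ is a $B^e$-module homomorphism, Lemma \ref{restB} gives $f|_B = 0$, so $ab - ba = 0$ for every $b \in B$; that is, $a$ already lies in the centralizer $C_A(B)$ of $B$ in $A$. Hence $\operatorname{ad}_a \in \Inn_{B^e}(A)$ by the characterization in Remark \ref{relativeHH1} (``any relative inner derivation must be given by an element that commutes with all elements in $B$''), so the class of $f$ in $\HH^1(A|B)$ is already zero. This shows $\ker \iota = 0$.

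I do not expect a serious obstacle here — the statement is essentially bookkeeping once Remark \ref{relativeHH1} and Lemmas \ref{restB} and \ref{lem:subideal} are in hand; the one subtlety to state carefully is that the same element $a$ witnessing $f = \operatorname{ad}_a$ in the absolute setting automatically witnesses it in the relative setting, so there is no need to modify the representative. If one wanted to avoid invoking Lemma \ref{restB} twice, the alternative is to argue directly that $\Inn_{B^e}(A) = \Inn_k(A) \cap \Der_{B^e}(A)$ (which is the definition given in Remark \ref{relativeHH1}) and that the snake/modular-law identity $\Der_{B^e}(A) \cap \Inn_k(A) = \Inn_{B^e}(A)$ forces the kernel of the induced map to vanish; either phrasing closes the proof.
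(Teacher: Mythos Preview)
Your proof is correct and follows essentially the same route as the paper: define $\iota$ via the inclusion $\Der_{B^e}(A)\hookrightarrow \Der_k(A)$, and for injectivity observe that if $f=\operatorname{ad}_a\in\Inn_k(A)$ with $f|_B=0$ then $a$ centralizes $B$, so $f\in\Inn_{B^e}(A)$. The paper's version is slightly terser (it phrases injectivity via $\iota(f)=\iota(g)\Rightarrow f-g\in\Inn_{B^e}(A)$ since $(f-g)(B)=0$), but the content is identical.
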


\begin{proof}
  Note that we have a well-defined map 
  $$
  \iota : \HH^1(A|B) \rightarrow \HH^1(A)
  $$
  where $f \mapsto f$. Let $f,g \in \HH^1(A|B)$ and assume that $\iota(f) = \iota(g)$. This means that $f-g = \delta \in \Inn_k(A)$, but then this implies that $\delta \in \Inn_{B^e}(A)$ as $0 = f(B)-g(B) = \delta(B)$. Therefore, $\iota$ is an injective map, and as the Lie structure is given by the commutator, we see that $\iota$ is an injective Lie algebra map. 
\end{proof}

Using this we can now show the following

\begin{corollary}
  \label{relabrel}
  Let \(k\) be a field, $A$ a $k$-algebra and \(B \subseteq A\) be a \(k\)-subalgebra. Then the following holds
  \begin{enumerate}
  \item If \(\HH^1(A)\) is abelian then \(\HH^1(A|B)\) is abelian;
  \item If \(\HH^1(A)\) is nilpotent then \(\HH^1(A|B)\) is nilpotent;
  \item If \(\HH^1(A)\) is solvable then \(\HH^1(A|B)\) is solvable. 
  \end{enumerate}
\end{corollary}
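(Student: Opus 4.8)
The plan is to derive all three statements as immediate consequences of the injective Lie algebra morphism $\iota$ constructed in Lemma \ref{lemma:emb}. The key observation is that being abelian, nilpotent, or solvable are each properties inherited by Lie subalgebras: if $\mathfrak{g}$ is abelian then every subalgebra is abelian; if $\mathfrak{g}$ is nilpotent, so is every subalgebra, since the lower central series of a subalgebra is contained termwise in that of $\mathfrak{g}$; and if $\mathfrak{g}$ is solvable, so is every subalgebra, since the derived series of a subalgebra is contained termwise in that of $\mathfrak{g}$.

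First I would invoke Lemma \ref{lemma:emb} to identify $\HH^1(A|B)$ with its image $\iota(\HH^1(A|B))$, which is a Lie subalgebra of $\HH^1(A)$. For part (1), if $\HH^1(A)$ is abelian, then every bracket in it vanishes, hence in particular every bracket of elements of the subalgebra vanishes, so $\HH^1(A|B)$ is abelian. For part (2), if $\HH^1(A)$ is nilpotent with lower central series terminating at zero, then by induction the $i$-th term of the lower central series of the subalgebra $\iota(\HH^1(A|B))$ is contained in the $i$-th term of the lower central series of $\HH^1(A)$, so it also terminates at zero; transporting back along the isomorphism $\iota$ onto its image, $\HH^1(A|B)$ is nilpotent. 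For part (3), the same argument with the derived series $\mathfrak{g}^{(i)}$ in place of the lower central series gives that $\HH^1(A|B)^{(i)} \cong \iota(\HH^1(A|B))^{(i)} \subseteq \HH^1(A)^{(i)}$, which is eventually zero, so $\HH^1(A|B)$ is solvable.

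There is no real obstacle here: the only point requiring any care is that $\iota$ is a Lie algebra \emph{isomorphism onto its image}, so that the derived and lower central series of $\HH^1(A|B)$ correspond exactly to those of the subalgebra $\iota(\HH^1(A|B)) \subseteq \HH^1(A)$, and these in turn are contained in the corresponding series of $\HH^1(A)$. Since Lemma \ref{lemma:emb} already establishes that $\iota$ is an injective Lie algebra map, this is automatic. I would write the proof in two or three sentences, treating all three parts uniformly by the remark that abelian, nilpotent, and solvable are all closed under passage to Lie subalgebras, and citing \cite{Erdmann} for these standard facts.
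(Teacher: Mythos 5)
Your proposal is correct and is exactly the argument the paper intends: the corollary is stated as an immediate consequence of the embedding $\iota\colon \HH^1(A|B)\hookrightarrow \HH^1(A)$ from Lemma \ref{lemma:emb}, together with the standard fact that abelian, nilpotent, and solvable Lie algebras are closed under passage to Lie subalgebras. No gaps.
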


Note that the converse does not hold in general, that is, $\HH^1(A)$ can be non-solvable but $\HH^1(A|B)$ can be solvable:

\begin{example}
  \label{Ex:notsolv}
  Assume $k$ is a field of characteristic different from $2$.  Let $\A = kQ/I$ where $Q$ is the quiver given by 
  \[\begin{tikzcd}
      1 & 2
      \arrow["{\alpha_{i=1,2}}", bend left, shift left=2, from=1-1, to=1-2]
      \arrow[bend left, from=1-1, to=1-2]
      \arrow["\beta", bend left, from=1-2, to=1-1]
    \end{tikzcd}\]
  and $I = (\alpha_i\beta)$. Let $B$ be the Kronecker quiver given by $\alpha_1$ and $\alpha_2$. Then $\HH^1(A)\cong \sl_2(k)\times k$ and $\HH^1(A|B)\cong k$.
\end{example}

We provide now another sufficient condition for the solvability of  $\HH^1(A|B)$ which also includes Example \ref{Ex:notsolv}. Recall that we denote by $Q_C$ the complement of the Gabriel quiver of $A$ by the arrows of the Gabriel quiver of $B$. Note that if $B$ is a subalgebra of $A$ such that $Q_B \subseteq Q_A$, then $(Q_A)_0=(Q_B)_0$, i.e. $A$ and $B$ have the same semisimple part.

\begin{theorem}
  \label{thm1}
  Let $k$ be an algebraically closed field of characteristic $0$. Let $A$ be a finite-dimensional basic algebra.  Let $B$ be a subalgebra of $A$ such that $Q_B \subseteq Q_A$.  If $Q_C$ has no parallel arrows and at most one loop at each vertex, then $\HH^1(A|B)$ is a solvable Lie algebra. 
\end{theorem}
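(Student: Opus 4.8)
The plan is to reduce the statement to the radical square zero case treated in Proposition~\ref{prop:HH1rad2ab} by filtering derivations according to the radical filtration of $A$. I would write $E\cong k(Q_A)_0$ for the separable subalgebra of $A$; by the remark preceding the statement $(Q_B)_0=(Q_A)_0$, so $E\subseteq B$, and by Remark~\ref{relativeHH1} together with Lemma~\ref{lem:subideal} the Lie algebra $\HH^1(A|B)$ is the quotient of $\mathfrak g:=\Der_{B^e}(A)$ by the ideal $\Inn_{B^e}(A)$. Hence it is enough to show that $\mathfrak g$ is solvable, and by Lemma~\ref{restB} the elements of $\mathfrak g$ are precisely the $k$-derivations of $A$ vanishing on $B$, in particular vanishing on $E$.

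First I would introduce, for $d\ge 0$, the subspace $\mathfrak g_{\ge d}=\{f\in\mathfrak g : f(\operatorname{rad}^nA)\subseteq\operatorname{rad}^{n+d}A\text{ for all }n\ge 0\}$. Since $[\mathfrak g_{\ge d},\mathfrak g_{\ge e}]\subseteq\mathfrak g_{\ge d+e}$ and $\operatorname{rad}^NA=0$ for $N\gg0$ (as $A$ is finite-dimensional), the space $\mathfrak g_{\ge 1}$ is a nilpotent Lie algebra and an ideal of $\mathfrak g_{\ge 0}$. The key point --- and the only one that is not formal --- is that $\mathfrak g=\mathfrak g_{\ge 0}$: a derivation vanishing on $E$ sends an arrow $\alpha\colon i\to j$ into $e_jAe_i$, which is contained in $\operatorname{rad}A$ when $i\ne j$, while for a loop $\alpha$ one checks that the semisimple component of $f(\alpha)$ vanishes by evaluating $f$ on $\alpha^m=0$ (with $m$ the nilpotence order of $\alpha$) and using $\chr k=0$; a derivation that kills all idempotents and sends every arrow into $\operatorname{rad}A$ then preserves the whole radical filtration by a short induction. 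Granting this, $\mathfrak g_{\ge 1}$ is a nilpotent ideal of $\mathfrak g$, and it remains to prove that $\mathfrak g/\mathfrak g_{\ge 1}$ is solvable.

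Next I would pass to the radical square zero quotients $A_2:=A/\operatorname{rad}^2A$ and $B_2:=B/\operatorname{rad}^2B$: these are radical square zero basic algebras with $Q_{A_2}=Q_A$, $Q_{B_2}=Q_B$, $B_2\subseteq A_2$, and complement quiver $Q_{C_2}=Q_C$. Because every $f\in\mathfrak g=\mathfrak g_{\ge 0}$ preserves $\operatorname{rad}^2A$, it descends to a derivation $\overline f$ of $A_2$ vanishing on $B_2$, and $f\mapsto\overline f$ is a homomorphism of Lie algebras $\mathfrak g\to\Der_{B_2^e}(A_2)$ whose kernel is exactly $\mathfrak g_{\ge 1}$. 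Using the isomorphism of Remark~\ref{relker} and Lemma~\ref{relcomb}, $\Der_{B_2^e}(A_2)\cong\mathrm{Ker}(\delta^1_{A_2|B_2})$, so $\mathfrak g/\mathfrak g_{\ge 1}$ embeds as a Lie subalgebra of $\mathrm{Ker}(\delta^1_{A_2|B_2})$. Since $Q_{C_2}=Q_C$ has no parallel arrows and at most one loop at each vertex, the proof of Proposition~\ref{prop:HH1rad2ab} shows that the derived subalgebra of $\mathrm{Ker}(\delta^1_{A_2|B_2})$ is abelian, hence $\mathrm{Ker}(\delta^1_{A_2|B_2})$ is solvable and so is its subalgebra $\mathfrak g/\mathfrak g_{\ge 1}$. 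As an extension of a solvable Lie algebra by a nilpotent one is solvable, $\mathfrak g$ is solvable, and therefore so is its quotient $\HH^1(A|B)$.

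The step I expect to be the main obstacle is the identity $\mathfrak g=\mathfrak g_{\ge 0}$, namely that a derivation vanishing on the semisimple part preserves the radical filtration: this is immediate away from loops but genuinely uses $\chr k=0$ at a loop, and it fails otherwise (for $\chr k=2$ and a loop with $\alpha^2=0$ there are derivations of negative radical degree), which is precisely why the positive-characteristic analogue, Theorem~\ref{thm2}, has to exclude loops. Everything after this reduction is a formal manipulation of solvable and nilpotent Lie algebras combined with the radical square zero computation of Proposition~\ref{prop:HH1rad2ab}.
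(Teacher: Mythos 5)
Your proposal is, at its core, the same proof as the paper's: reduce modulo the square of the radical, observe that the kernel of the reduction is a nilpotent ideal, get solvability of the image from the radical square zero computation (the derived subalgebra is abelian, Proposition~\ref{prop:HH1rad2ab}), and conclude because an extension of a solvable Lie algebra by a nilpotent one is solvable. The only real difference is where the two nontrivial inputs come from: the paper works directly with $\HH^1$ and quotes \cite[Theorem 4.2]{Hoc42} for the fact that in characteristic zero every derivation preserves $J(A)$ (giving the map $\varphi_{A|B}\colon \HH^1(A|B)\to \HH^1(A/J(A)^2|B/J(B)^2)$) and \cite[Proposition 2.9]{LinRyD} for the nilpotency of the kernel, whereas you work with $\mathfrak g=\Der_{B^e}(A)$ and reprove both facts via the filtration $\mathfrak g_{\ge d}$. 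That repackaging is fine and arguably more self-contained, except for one step.

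The gap is exactly where you expected it: your verification that $\mathfrak g=\mathfrak g_{\ge 0}$ at a loop. Evaluating $f$ on $\alpha^m=0$ with $f(\alpha)=\lambda e_i+r$, $r\in e_i\operatorname{rad}(A)e_i$, only yields $m\lambda\,\alpha^{m-1}\in\operatorname{rad}^m\!A$; to deduce $\lambda=0$ you need $\alpha^{m-1}\notin\operatorname{rad}^m\!A$, and this can fail for admissible ideals. For instance, take one vertex with two loops $\alpha,\beta$ and relations $\alpha\beta,\ \beta\alpha,\ \alpha^2-\beta^3,\ \beta^4$: then $\alpha^3=0$ but $\alpha^{2}=\beta^3\in\operatorname{rad}^3\!A$, so the evaluation argument is vacuous; moreover, taking $B$ to be the subalgebra generated by the idempotent and $\beta$, the complement $Q_C$ is a single loop, so this configuration is allowed by the hypotheses of the theorem. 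The statement you need is true, but it is precisely the classical characteristic-zero theorem of Hochschild that the paper cites (every derivation of a finite-dimensional algebra preserves the Jacobson radical, hence the whole radical filtration by the Leibniz rule); substituting that citation for your ad hoc loop computation closes the gap, and the rest of your argument then goes through and coincides with the paper's proof. Your remark about characteristic $2$ and $\alpha^2=0$ is correct and is indeed the reason Theorem~\ref{thm2} excludes loops.
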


\begin{proof} 
  In characteristic zero, every derivation preserves the Jacobson radical, see \cite[Theorem 4.2]{Hoc42}. Hence, we have a Lie algebra map $\varphi_A:\HH^1(A)\to \HH^1(A/J(A)^2)$. By Lemma \ref{lemma:emb} we have an embedding $\HH^1(A|B)\xhookrightarrow{} \HH^1(A)$. Hence, we have a Lie algebra homomorphism \[
    \varphi_{A|B}:\HH^1(A|B) \to \HH^1(A/J(A)^2|B/J(B)^2)
  \] 
  as $\varphi_A(\iota(f))(B/J(B)^2) = 0$ for any $f \in \HH^1(A|B)$. Since $\mathrm{Ker}(\varphi_A)$ is a nilpotent Lie ideal of $\HH^1(A)$, see \cite[Proposition 2.9.]{LinRyD}, we have that $\mathrm{Ker}(\varphi_{A|B})$ is a nilpotent Lie ideal of $\HH^1(A|B)$. By Proposition \ref{prop:HH1rad2ab} we have that the derived subalgebra of $\HH^1(A/J(A)^2|B/J(B)^2)$ is an abelian Lie algebra. Since the derived subalgebra of $\mathrm{Im}(\varphi_{A|B})$ is contained in the derived subalgebra of $\HH^1(A/J(A)^2|B/J(B)^2)$, which is an abelian Lie algebra, this implies that $\mathrm{Im}(\varphi_{A|B})$ is a solvable Lie algebra. The statement follows from $\HH^1(A)/ \mathrm{Ker}(\varphi_{A|B})$ and $\mathrm{Ker}(\varphi_{A|B})$ being solvable Lie algebras \cite[Lemma 4.4]{Erdmann}.  
\end{proof}

In positive characteristic we need to add a further assumption on $Q_A$:

\begin{theorem}
  \label{thm2}
  Let $k$ be an algebraically closed field of positive characteristic. Let $A$ be a finite-dimensional basic algebra such that the quiver of $A$ does not have any loops. Let $B$ be a subalgebra of $A$ such that $Q_B \subseteq Q_A$.  If $Q_C$ is a simple directed graph, then $\HH^1(A|B)$ is a strongly solvable Lie algebra.
\end{theorem}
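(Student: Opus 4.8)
The plan is to imitate the proof of Theorem~\ref{thm1}, the only genuinely new point being that the role played there by Hochschild's theorem — every derivation preserves the Jacobson radical — is played here by the hypothesis that $Q_A$ has no loops. Work with $E=kQ_0$ and the identification $\HH^1(A)\cong\Der_{E^e}(A)/\Inn_{E^e}(A)$, so every class has a representative vanishing on $E$. For such a derivation $d$ and any arrow $\alpha$ one has $d(\alpha)\in e_{t(\alpha)}Ae_{s(\alpha)}\subseteq J(A)$, precisely because $s(\alpha)\ne t(\alpha)$; hence $d(J(A))\subseteq J(A)$ and $d(J(A)^2)\subseteq J(A)^2$, giving a Lie algebra homomorphism $\varphi_A\colon\HH^1(A)\to\HH^1(A/J(A)^2)$. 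Composing with the embedding $\HH^1(A|B)\hookrightarrow\HH^1(A)$ of Lemma~\ref{lemma:emb} and using that $Q_B\subseteq Q_A$ is also loop-free, this restricts to
\[
  \varphi_{A|B}\colon\HH^1(A|B)\longrightarrow\HH^1\!\bigl(A/J(A)^2\mid B/J(B)^2\bigr),
\]
exactly as in the proof of Theorem~\ref{thm1}.

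Next I would collect the two structural facts. On the one hand, $\mathrm{Ker}(\varphi_A)$ is nilpotent by the argument of \cite[Proposition~2.9]{LinRyD}, which uses only that derivations preserve $J(A)$ and hence is valid in any characteristic; consequently $F^1:=\mathrm{Ker}(\varphi_{A|B})$ is a nilpotent Lie ideal of $\HH^1(A|B)$, and more precisely $\HH^1(A|B)$ carries a descending filtration $\HH^1(A|B)=F^0\supseteq F^1\supseteq\cdots\supseteq F^N=0$, where $F^i$ consists of the classes represented by relative derivations raising the radical filtration of $A$ by at least $i$, with $[F^i,F^j]\subseteq F^{i+j}$. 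On the other hand, $A/J(A)^2$ and $B/J(B)^2$ are radical square zero with complement quiver $Q_C$ — still a simple directed graph — so by Proposition~\ref{prop:HH1rad2ab} the derived subalgebra of $\HH^1(A/J(A)^2\mid B/J(B)^2)$ is abelian, and by Theorem~\ref{radsqrzero} (with $S=\varnothing$) it equals $\prod_{[\alpha]\in\overline E}\mathfrak I_\alpha$, spanned by the classes $\alpha_n\parall\alpha_j$ with $\alpha_n\in(Q_C)_1$ and $\alpha_j\in(Q_B)_1$ parallel to $\alpha_n$.

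At this point the characteristic-zero argument concludes only that $\HH^1(A|B)$ is solvable, so the remaining work is to strengthen this to strong solvability, i.e.\ to show $\mathfrak h:=[\HH^1(A|B),\HH^1(A|B)]$ is nilpotent. Since $\mathrm{Im}(\varphi_{A|B})$ has abelian derived subalgebra, $[\mathfrak h,\mathfrak h]\subseteq F^1\cap\mathfrak h$; hence for $x\in\mathfrak h$ the map $\mathrm{ad}(x)$ sends $\mathfrak h$ into $F^1\cap\mathfrak h$, and by Engel's theorem it is enough to prove that $\mathrm{ad}(x)$ is nilpotent on $F^1$. As $x\in F^0$, $\mathrm{ad}(x)$ preserves $F^1\supseteq F^2\supseteq\cdots$ and acts on each $F^i/F^{i+1}$ through the (radical-)leading term $\bar x$ of $x$, which — since $x\in\mathfrak h$ — lies in the derived subalgebra of $\mathrm{Im}(\varphi_{A|B})$, hence is a combination of arrow-swap derivations $f_{n,j}\colon\alpha_n\mapsto\alpha_j$ (all other arrows $\mapsto0$) with $\alpha_j\in(Q_B)_1$. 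Such an $f_{n,j}$ annihilates every arrow of $Q_B$, so for a relative derivation $g$ one gets $(g\circ f_{n,j})(\alpha_n)=g(\alpha_j)=0$, whence $\mathrm{ad}(f_{n,j})g=f_{n,j}\circ g$; iterating, each application replaces the arrow $\alpha_n$ by $\alpha_j\ne\alpha_n$ and strictly lowers the number of $Q_C$-arrows occurring in the output, which is bounded on the finite basis — so $\mathrm{ad}(f_{n,j})$ is nilpotent, and the $\mathrm{ad}(f_{n,j})$ commute since the derived subalgebra is abelian. Therefore $\mathrm{ad}(\bar x)$ is nilpotent on the associated graded, $\mathrm{ad}(x)$ is nilpotent on $F^1$, and $\mathfrak h$ is nilpotent; thus $\HH^1(A|B)$ is strongly solvable.

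The hard part is precisely this last step. In positive characteristic "solvable" is strictly weaker than "strongly solvable", and an extension of an abelian-by-abelian Lie algebra by a nilpotent ideal need not be strongly solvable, so the leading-term analysis is essential rather than decorative. Carrying it out rigorously means relating $F^i/F^{i+1}$ to (a subquotient of) the degree-$i$ component of the Hochschild cohomology of the associated graded algebra $\mathrm{gr}_{J(A)}A$ relative to $\mathrm{gr}_{J(B)}B$, verifying that its degree-$0$ derived subalgebra again consists of arrow-swaps into $Q_B$, and checking the nilpotency of their action; this is exactly where the hypotheses "$Q_A$ has no loops" and "$Q_C$ has no parallel arrows" are used to their full strength, and it is the technical heart of the proof.
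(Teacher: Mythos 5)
Your first two paragraphs coincide with the paper's proof, which consists of exactly this reduction and nothing more: since $Q_A$ has no loops, every derivation preserves $J(A)$ by \cite[Lemma 2.6]{LinRyD}, and then the argument of Theorem~\ref{thm1} is repeated verbatim -- the embedding of Lemma~\ref{lemma:emb} gives $\varphi_{A|B}$, the kernel $\mathrm{Ker}(\varphi_{A|B})$ is a nilpotent ideal because it sits inside $\mathrm{Ker}(\varphi_A)$, and the derived subalgebra of the image is abelian by Proposition~\ref{prop:HH1rad2ab}. The paper contains no analogue of your third paragraph: no filtration $F^i$, no associated graded algebra, no Engel-type leading-term analysis. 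Your underlying worry is not unreasonable -- in positive characteristic a Lie algebra with a nilpotent ideal and metabelian quotient need not have nilpotent derived subalgebra, so the purely formal part of the Theorem~\ref{thm1} argument yields solvability rather than strong solvability -- but the paper's own proof simply asserts that the same arguments apply, leaning on the cited results of \cite{LinRyD}, and does not carry out the extra analysis you propose.

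The genuine gap is that the extra analysis you propose is not actually carried out, and you flag this yourself. The whole mechanism rests on (i) identifying $\mathrm{Ker}(\varphi_{A|B})$ with the first step of a filtration of $\HH^1(A|B)$ by radical-raising degree satisfying $[F^i,F^j]\subseteq F^{i+j}$ at the level of cohomology classes, and (ii) the claim that $\mathrm{ad}(x)$ acts on $F^i/F^{i+1}$ through the leading term $\bar{x}=\varphi_{A|B}(x)$, interpreted as a relative derivation of the associated graded algebra. Neither is established. Point (i) is delicate in the relative setting: a class in $\mathrm{Ker}(\varphi_{A|B})$ is represented by a relative derivation that becomes relative-inner on $A/J(A)^2$, and adjusting the representative so that it sends arrows into $J(A)^2$ requires lifting an element of $A/J(A)^2$ commuting with $B/J(B)^2$ to an element of $A$ commuting with $B$, which is not automatic (the paper avoids this issue because nilpotency of the kernel only needs the embedding into $\mathrm{Ker}(\varphi_A)$). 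Point (ii) presupposes a comparison between the filtered $\HH^1(A|B)$ and the relative Hochschild data of $\mathrm{gr}_{J(A)}A$ that you explicitly defer as ``the technical heart of the proof''; the computations with the arrow-swap derivations $f_{n,j}$ are fine as far as they go, but they only become a proof once that comparison is in place. A secondary slip: Theorem~\ref{radsqrzero} is a characteristic-zero statement (its proof uses the Levi decomposition), so in positive characteristic you should instead quote the description of the derived subalgebra extracted in the proof of Proposition~\ref{prop:HH1rad2ab}, which is characteristic-free. As written, your proposal reproduces the paper's proof of solvability and announces, but does not execute, the upgrade to strong solvability.
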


Note that if $Q_A$ does not have any loops, then the condition of $Q_C$ having at most one loop and no parallel arrows is equivalent of requiring $Q_C$ to be a simple directed graph.
\begin{proof}
  Since the quiver of $A$ does not have any loops then by \cite[Lemma 2.6.]{LinRyD} every derivation preserves the Jacobson radical. By the same arguments as in the proof of Theorem \ref{thm1} the statement follows.
\end{proof}

\section{Dual extensions of directed monomial algebras}
\label{dualext}

Let $A, B$ be directed monomial algebras. In this section we compute the first Hochschild cohomology of a dual extension $\Lambda (B,A^{\op})$ in terms of $\HH^1(B)$ and $\HH^1(A|B)$.

\begin{definition}
  Let $A=Q_A/I_A$ and $B=Q_B/I_B$ be two finite-dimensional algebras such that the set of vertices of $Q_A$ and $Q_B$ coincide. Then the \textit{dual extension algebra} of $B$ and $A$ is $\Lambda(B,A^{\op})=kQ/I$,  where $Q$ and $I$ are defined as follows:
  \begin{itemize}
  \item $Q$ has the same set of vertices as $Q_A$ and $Q_B$.
  \item the set of arrows of $Q$ is the union of the set of arrows of $Q_A$ and of $Q_B$.
  \item if $I_A$ is generated by the relations $\rho_i$ and $I_B$ by the relations $\rho^{\prime}_j$, then $I$ is generated by $\{\rho_i, \rho^{\prime}_j,  \alpha\beta^{\prime}\}$, where $\alpha$ is an arrow of $Q_B$ and $\beta^{\prime}$  is an arrow of $Q_A$. We denote these sets of relations by $\rho_{B}, \rho_{A}$ and $\overline{\rho}_{\Lambda}$, respectively. 
  \end{itemize}
\end{definition}

\begin{example}\label{ex:runningExampleSection4}
  Let $B$ be the Kronecker quiver and let $A$ be the $A_2$-quiver. Then we get that $\Lambda = \Lambda(B,A^{\op})$ is the algebra in Example~\ref{Ex:notsolv}. 
\end{example}

\begin{definition}
  Let $Q$ be a finite and acyclic quiver $Q$ having vertices labelled by $\{1, \dots, n\}$. Without loss of generality, one can assume that for any arrow $\alpha: i\to j$ we have $i< j$. Let $I$ be an admissible
  ideal and set $B=KQ/I$. Then we say that $B$ is \textit{directed} if $Q$ is finite, acyclic, and numbered as above.
\end{definition}

Let $A$ and $B$ be directed algebras. Then $\Lambda(B,A^{\op})$ is a quasi-hereditary algebra having an exact Borel subalgebra $B$, see \cite[Example 1.6]{Xi1994}. Let $A$ and $B$ be directed monomial algebras. We set $\Lambda:=\Lambda(B,A^{\op})$ when it is clear from the context. 

Note that if $A$ and $B$ are monomial algebras, then $\Lambda$ is a monomial algebra. Therefore, in order to describe $\HH^1(\Lambda)$, $\HH^1(B)$, and $\HH^1(\Lambda|B)$, we use Proposition \ref{Stra2.6} and the notation therein. From now on $A$ and $B$ will be directed monomial algebras, with the same ordering on the vertices.

We first describe a basis for $k((Q_{\Lambda})_0\parall  \B_{\Lambda})$. It is clear that $\B_{\Lambda}=\B_B\cup\B_{A^{\op}}\cup \overline{\B}_{\Lambda}$, where $\overline{\B}_{\Lambda}$ is given by paths $\overline{p}$  where $\overline{p}=q_n\cdots q_1p_m\cdots p_1$ such that $q_i$ are arrows in $ \B_{A^{\op}}$ and $p_j$ are arrows in $ \B_B$.  Since, $(Q_{\Lambda})_0\parall \B_B = (Q_{\Lambda})_0\parall \B_{A^{\op}}= (Q_{\Lambda})_0\parall (Q_{\Lambda})_0$, we have that a basis for $k((Q_{\Lambda})_0\parall  \B_{\Lambda})$ is given by:
\[
  (Q_{\Lambda})_0\parall \B_{\Lambda}= (Q_{\Lambda})_0\parall (Q_{\Lambda})_0 \cup  (Q_{\Lambda})_0\parall \overline{\B}_{\Lambda}
\]
Let $e\in (Q_{\Lambda})_0$.  We denote by $\mathrm{Im}(\delta^0_{\Lambda})_{|_1}$ the image generated by $\delta_{\Lambda}^0(e\parall e)$. We describe now $k((Q_{\Lambda})_1\parall  \B_{\Lambda})$. Since $(Q_{\Lambda})_1=(Q_{B})_1 \cup (Q_{A^{\op}})_1 $, we have

\begin{equation*}
  \begin{split}
    (Q_{\Lambda})_1\parall \B_{\Lambda}= &(Q_B)_1\parall \B_B \cup (Q_{A^{\op}})_1\parall \B_{A^{\op}} \cup (Q_B)_1\parall \B_{A^{\op}} \cup (Q_{A^{\op}})_1\parall \B_{B}\\
    &\cup (Q_B)_1\parall \overline{\B}_{\Lambda} \cup (Q_{A^{\op}})_1\parall \overline{\B}_{\Lambda}.
  \end{split}
\end{equation*}

\begin{lemma}
  Let $A$ and $B$ be directed monomial algebras and let $\Lambda=\Lambda(B,A^{\op})$. If $\alpha\in (Q_{A^{\op}})_1$, then there does not exist a path $p$ in $\B_B$ such that $\alpha$ is parallel to $p$. Hence, $(Q_{A^{\op}})_1\parall  \B_B=\varnothing$. Similarly, $(Q_{B})_1\parall  \B_{A^{\op}}=\varnothing$.
\end{lemma}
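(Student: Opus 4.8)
The plan is to exploit the directedness hypothesis, which amounts to fixing a numbering $1,\dots,n$ of the common vertex set so that every arrow of $Q_A$ and of $Q_B$ goes from a smaller to a larger label. First I would record two monotonicity facts. Since $A$ and $B$ are directed with respect to the \emph{same} ordering, every arrow $\beta\in (Q_B)_1$ satisfies $s(\beta)<t(\beta)$, and dually every arrow $\alpha\in (Q_{A^{\op}})_1$, being the reversal of an arrow of $Q_A$, satisfies $s(\alpha)>t(\alpha)$; the strictness uses that $A$ (hence $A^{\op}$) is acyclic, so there are no loops. Composing, any nontrivial path lying entirely in $Q_B$ has source strictly below target, and any nontrivial path lying entirely in $Q_{A^{\op}}$ has source strictly above target, whereas a trivial path (a vertex) has source equal to target. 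In particular, every path $p\in\B_B$ satisfies $s(p)\leq t(p)$ and every path $q\in\B_{A^{\op}}$ satisfies $s(q)\geq t(q)$.

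The statement is then immediate from the fact that two paths are parallel exactly when they share both source and target. If $\alpha\in (Q_{A^{\op}})_1$ were parallel to some $p\in\B_B$, we would get $s(p)=s(\alpha)>t(\alpha)=t(p)$, contradicting $s(p)\leq t(p)$; hence no such $p$ exists and $(Q_{A^{\op}})_1\parall \B_B=\varnothing$. The second assertion is the symmetric argument: an arrow $\beta\in (Q_B)_1$ has $s(\beta)<t(\beta)$ while any $q\in\B_{A^{\op}}$ has $s(q)\geq t(q)$, so $\beta$ is never parallel to $q$, giving $(Q_B)_1\parall \B_{A^{\op}}=\varnothing$.

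There is essentially no obstacle here; the only point worth a word of care is to remember that the trivial paths (vertices) are among the candidate parallel paths, and to observe that an arrow — having distinct source and target by acyclicity — is never parallel to a vertex. This is already subsumed in the inequalities $s(\alpha)>t(\alpha)$ and $s(\beta)<t(\beta)$ being \emph{strict}, so no separate case is needed.
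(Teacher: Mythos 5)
Your proof is correct and follows essentially the same route as the paper's: both exploit that, with the common vertex ordering, arrows of $Q_B$ strictly increase labels while arrows of $Q_{A^{\op}}$ strictly decrease them, so a parallel pair would force $s(p)>t(p)$ for a path in $\B_B$, a contradiction. Your explicit treatment of trivial paths and of the symmetric second claim is a minor completeness bonus over the paper's argument, not a different method.
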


\begin{proof}
  Assume $s(\alpha)=i=s(p)$ and $t(\alpha)=j=t(p)$ 
  where $p=p_n\cdots p_1$ such that $p_i\in (Q_B)_1$. Since $s(\alpha)=i> j=t(\alpha)$, then
  $s(p)>t(p)$. This is a contradiction since $p\in (Q_B)_1$.
\end{proof}

As a consequence of the previous lemma, we obtain the union of pairwise disjoint sets:
\begin{equation*}\label{basisdecomposition1}
  \begin{split}
    (Q_{\Lambda})_1\parall \B_{\Lambda}= (Q_B)_1\parall \B_B \cup   (Q_B)_1\parall \overline{\B}_{\Lambda} \cup
    (Q_{A^{\op}})_1\parall \B_{A^{\op}} \cup
    (Q_{A^{\op}})_1\parall \overline{\B}_{\Lambda}
  \end{split}
\end{equation*}
which gives
\begin{equation}\label{basisdecomposition2}
  \begin{split}
    k((Q_{\Lambda})_1\parall \B_{\Lambda})= k((Q_B)_1\parall \B_B) \oplus   k((Q_B)_1\parall \overline{\B}_{\Lambda}) \oplus
    k((Q_{A^{\op}})_1\parall \B_{A^{\op}}) \oplus
    k((Q_{A^{\op}})_1\parall \overline{\B}_{\Lambda}).
  \end{split}
\end{equation}
Note that
\[
  \mathrm{Ker}(\delta^1_{\Lambda|A^{\op}})=\mathrm{Ker}(\delta^1_{\Lambda})\cap(k((Q_B)_1\parall \B_B) \oplus   k((Q_B)_1\parall \overline{\B}_{\Lambda}) )
\]
and similarly
\[
  \mathrm{Ker}(\delta^1_{\Lambda|B})=\mathrm{Ker}(\delta^1_{\Lambda})\cap(k((Q_{A^{\op}})_1\parall \B_{A^{\op}}) \oplus   k((Q_{A^{\op}})_1\parall \overline{\B}_{\Lambda}).
\]

\begin{lemma}\label{lem:vectdec}
  Let $A$ and $B$ be directed monomial algebras and let $\Lambda=\Lambda(B,A^{\op})$. We have the following $k$-vector space decomposition 
  \[
    \mathrm{Ker}(\delta^1_{\Lambda})= \mathrm{Ker}(\delta^1_{\Lambda|A^{\op}})\oplus \mathrm{Ker}(\delta^1_{\Lambda|B}).
  \]
  In addition, we have the following decomposition 
  \begin{equation}
    \begin{split}
      \mathrm{\mathrm{Ker}(\delta^1_{\Lambda|A^{\op}})}&= 
      \mathrm{Ker}(\delta^1_B) \oplus (\mathrm{Ker}(\delta^1_{\Lambda})
      \cap   k((Q_B)_1\parall \overline{\B}_{\Lambda} )); \\
      \mathrm{Ker}(\delta^1_{\Lambda| B})&= \mathrm{Ker}(\delta^1_{A^{\op}}) \oplus (\mathrm{Ker}(\delta^1_{\Lambda})
      \cap   k((Q_{A^{\op}})_1\parall \overline{\B}_{\Lambda})), 
    \end{split}
  \end{equation}
  as $k$-vector spaces.
\end{lemma}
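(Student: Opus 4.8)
The plan is to track how the Strametz differential $\delta^1_\Lambda$ interacts with the direct sum decomposition \eqref{basisdecomposition2} of its domain. Set $V_1=k((Q_B)_1\parall\B_B)$, $V_2=k((Q_B)_1\parall\overline{\B}_\Lambda)$, $V_3=k((Q_{A^{\op}})_1\parall\B_{A^{\op}})$ and $V_4=k((Q_{A^{\op}})_1\parall\overline{\B}_\Lambda)$, so that $k((Q_\Lambda)_1\parall\B_\Lambda)=V_1\oplus V_2\oplus V_3\oplus V_4$ by \eqref{basisdecomposition2}. On the codomain, $Z_\Lambda=\rho_B\sqcup\rho_A\sqcup\overline{\rho}_\Lambda$ and $\B_\Lambda=\B_B\sqcup\B_{A^{\op}}\sqcup\overline{\B}_\Lambda$ partition the basis of $k(Z_\Lambda\parall\B_\Lambda)$ into pieces such as $k(\rho_B\parall\B_B)$, which are pairwise linearly independent. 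The goal is to show that $\delta^1_\Lambda$ maps $V_1,V_2,V_3,V_4$ into the four pieces $k(\rho_B\parall\B_B)$, $k(\rho_B\parall\overline{\B}_\Lambda)$, $k(\rho_A\parall\B_{A^{\op}})$, $k(\rho_A\parall\overline{\B}_\Lambda)$ respectively, and that on $V_1$ (resp.\ $V_3$) it coincides with $\delta^1_B$ (resp.\ $\delta^1_{A^{\op}}$); the three asserted decompositions will then follow from the elementary fact that a linear map sending the summands of its domain into pairwise linearly independent subspaces of its codomain has kernel equal to the direct sum of the kernels of the corresponding restrictions.

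The core step is to evaluate $\delta^1_\Lambda(a\parall p)=\sum_{r\in Z_\Lambda}r\parall r^{a\parall p}$ for $a\in(Q_B)_1$ (the case $a\in(Q_{A^{\op}})_1$ being symmetric under exchanging $B$ and $A^{\op}$). No $r\in\rho_A$ contributes, since $a$ cannot occur in a path lying entirely in $(Q_{A^{\op}})_1$. For $r=\alpha\beta'\in\overline{\rho}_\Lambda$ with $\alpha\in(Q_B)_1$, $\beta'\in(Q_{A^{\op}})_1$, the term $r^{a\parall p}$ is nonzero only when $a=\alpha$, where it equals $p\beta'$; here I would argue that $p$, being parallel to the arrow $a\in(Q_B)_1$, has positive length ($Q_B$ is acyclic) and lies in $\B_B$ or $\overline{\B}_\Lambda$ but not in $\B_{A^{\op}}$, hence its rightmost arrow lies in $(Q_B)_1$, so that $p\beta'$ contains a subpath consisting of an arrow of $Q_B$ followed by an arrow of $Q_{A^{\op}}$, i.e.\ a relation in $\overline{\rho}_\Lambda$; therefore $p\beta'=0$ in $\B_\Lambda$. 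Finally, for $r\in\rho_B$, substituting $p$ for $a$ in $r$ produces paths each containing at least one arrow of $Q_B$ (as $r$ has length at least $2$ in the acyclic quiver $Q_B$, it is not a power of $a$) and, when $p\in\overline{\B}_\Lambda$, also an arrow of $Q_{A^{\op}}$; since a path in $\B_\Lambda$ cannot contain an $\overline{\rho}_\Lambda$-relation, such a surviving path lies in $\B_B$ when $p\in\B_B$ and in $\overline{\B}_\Lambda$ when $p\in\overline{\B}_\Lambda$. Moreover, since a path in $Q_B$ belongs to $\B_\Lambda$ precisely when it belongs to $\B_B$, for $p\in\B_B$ the substitution $r^{a\parall p}$ gives the same element whether read in $\Lambda$ or in $B$; this yields $\delta^1_\Lambda|_{V_1}=\delta^1_B$, and likewise $\delta^1_\Lambda|_{V_3}=\delta^1_{A^{\op}}$.

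Granting the above, the rest is bookkeeping. The quoted elementary fact gives $\mathrm{Ker}(\delta^1_\Lambda)=\bigoplus_{i=1}^4\mathrm{Ker}(\delta^1_\Lambda|_{V_i})$, where $\mathrm{Ker}(\delta^1_\Lambda|_{V_1})=\mathrm{Ker}(\delta^1_B)$, $\mathrm{Ker}(\delta^1_\Lambda|_{V_3})=\mathrm{Ker}(\delta^1_{A^{\op}})$, while $\mathrm{Ker}(\delta^1_\Lambda|_{V_2})=\mathrm{Ker}(\delta^1_\Lambda)\cap k((Q_B)_1\parall\overline{\B}_\Lambda)$ and $\mathrm{Ker}(\delta^1_\Lambda|_{V_4})=\mathrm{Ker}(\delta^1_\Lambda)\cap k((Q_{A^{\op}})_1\parall\overline{\B}_\Lambda)$. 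Intersecting this decomposition with $V_1\oplus V_2$ and with $V_3\oplus V_4$, and using the identifications $\mathrm{Ker}(\delta^1_{\Lambda|A^{\op}})=\mathrm{Ker}(\delta^1_\Lambda)\cap(V_1\oplus V_2)$ and $\mathrm{Ker}(\delta^1_{\Lambda|B})=\mathrm{Ker}(\delta^1_\Lambda)\cap(V_3\oplus V_4)$ recorded immediately before the statement, one reads off the two displayed $k$-vector space decompositions; and since $V_1\oplus V_2$ and $V_3\oplus V_4$ together span $k((Q_\Lambda)_1\parall\B_\Lambda)$, the first decomposition $\mathrm{Ker}(\delta^1_\Lambda)=\mathrm{Ker}(\delta^1_{\Lambda|A^{\op}})\oplus\mathrm{Ker}(\delta^1_{\Lambda|B})$ falls out as well.

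I expect all the real content to sit in the middle paragraph, and in particular in the vanishing $r^{a\parall p}=p\beta'=0$ for $r\in\overline{\rho}_\Lambda$: this is the one place where directedness of $A$ and $B$ is genuinely used, ensuring that the parallel path $p$ has positive length and that its extreme arrow belongs to $Q_B$, so that the substituted path always meets a relation of $\overline{\rho}_\Lambda$. Everything after that is formal manipulation of the decomposition \eqref{basisdecomposition2}.
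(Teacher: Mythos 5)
Your proposal is correct and takes essentially the same approach as the paper: both start from the decomposition \eqref{basisdecomposition2} and evaluate $\delta^1_{\Lambda}$ summand by summand, checking that only relations of the matching type contribute and that the parallel class ($\B_B$, $\B_{A^{\op}}$ or $\overline{\B}_{\Lambda}$) is preserved, so that the kernel splits accordingly. The only difference is that you spell out the vanishing of the mixed terms such as $\overline{\rho}_{\Lambda}^{\,a\parall p}$ (via directedness and the relations $\alpha\beta'$), which the paper's proof asserts without detail.
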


\begin{proof}
  We use the decomposition given in Equation \ref{basisdecomposition2}. Let $\alpha$ be an arrow of $Q_B$ and $p$ be a path in $B$, then
  \[
    \delta^1(\alpha\parall p) =\rho_B^{\alpha\parall p}+ \rho_{A^{\op}}^{\alpha\parall p}+ \overline{\rho}_{\Lambda}^{\ \alpha\parall p}=\rho_B^{\alpha\parall p}.
  \] 
  Let $\overline{p} \in \overline{\B}_{\Lambda}$. Then 
  \[
    \delta^1(\alpha\parall \overline{p})= \rho_B^{\alpha\parall \overline{p}}+ \rho_{A^{\op}}^{\alpha\parall \overline{p}}+ \overline{\rho}_{\Lambda}^{\ \alpha\parall \overline{p}}=\rho_B^{\alpha\parall \overline{p}}.
  \]
  Hence a basis for $\mathrm{Ker}(\delta^1_{\Lambda}) \cap k((Q_B)_1\parall \B_B)$ is given by linear combinations of elements of the form $\alpha\parall p $. Similarly, a basis of $\mathrm{Ker}(\delta^1_{\Lambda}) \cap k((Q_B)_1\parall \overline{\B}_{\Lambda} ))$ is given by linear combinations of elements of the form $\alpha\parall \overline{p}$.
  
  Let $\beta $ be an arrow of $Q_{A^{\op}}$ and $q$ be a path in $A^{\op}$. Then
  \[
    \delta^1(\beta\parall q) =\rho_{A^{\op}}^{\beta\parall q}
    \ \ \ \ \textrm{and} \ \ \ \ \delta^1(\beta\parall \overline{p})=\rho_{A^{\op}}^{\beta\parall \overline{p}}.
  \] 
  Therefore a basis of $\mathrm{Ker}(\delta^1_{\Lambda})\cap k((Q_{A^{\op}})_1\parall \B_{A^{\op}})$ is given by linear combinations of elements $\beta\parall q$. Similarly, a basis for $\mathrm{Ker}(\delta^1_{\Lambda})\cap  k((Q_{A^{\op}})_1\parall \overline{\B}_{\Lambda})$ is given by linear combinations of elements of the form $\beta\parall \overline{p}$.
\end{proof}

Denote 
$$\mathcal{J}^{\prime}:=(\mathrm{Ker}(\delta^1_{\Lambda})
\cap   k(Q_B)_1\parall \overline{\B}_{\Lambda} ).$$

\begin{lemma}
  Let $A$ and  $B$ be directed monomial algebras and let $\Lambda=\Lambda(B,A^{\op})$. We have the following decomposition as $k$-vector spaces
  \[
    \mathrm{Ker}(\delta^1_{\Lambda|A^{\op}})= \mathrm{Ker}(\delta^1_B) \oplus \mathcal{J}^{\prime}
  \]
  and $\mathcal{J}^{\prime}$ is a Lie ideal of $\mathrm{Ker}(\delta^1_{\Lambda|A^{\op}})$.
\end{lemma}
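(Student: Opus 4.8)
The $k$-vector space decomposition $\mathrm{Ker}(\delta^1_{\Lambda|A^{\op}})=\mathrm{Ker}(\delta^1_B)\oplus\mathcal{J}^{\prime}$ is nothing but the first displayed identity of Lemma~\ref{lem:vectdec}, since by definition $\mathcal{J}^{\prime}=\mathrm{Ker}(\delta^1_{\Lambda})\cap k((Q_B)_1\parall\overline{\B}_{\Lambda})$; moreover $\mathcal{J}^{\prime}\subseteq\mathrm{Ker}(\delta^1_{\Lambda|A^{\op}})$, because every element of $k((Q_B)_1\parall\overline{\B}_{\Lambda})$ has only arrows of $Q_B$ in its first slot, hence, under the identification of Remark~\ref{relker}, corresponds to a derivation vanishing on the arrows of $Q_{A^{\op}}$. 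So the only real content is that $[\mathcal{J}^{\prime},\mathrm{Ker}(\delta^1_{\Lambda|A^{\op}})]\subseteq\mathcal{J}^{\prime}$.

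To prove this I would use the explicit bracket: by \cite[Theorem~2.7]{Str}, the Lie bracket on $\mathrm{Ker}(\delta^1_{\Lambda})\cong\mathrm{Der}_{E^e}(\Lambda)$ is the restriction of the bilinear operation on the cochain space sending a pair of basis elements $a\parall p$, $b\parall q$ to $b\parall q^{a\parall p}-a\parall p^{b\parall q}$ (the precise signs are irrelevant for what follows). Writing $x\in\mathcal{J}^{\prime}$ and $y\in\mathrm{Ker}(\delta^1_{\Lambda|A^{\op}})$ as $k$-linear combinations of pairs $\alpha\parall\overline p$ with $\alpha\in(Q_B)_1$, $\overline p\in\overline{\B}_{\Lambda}$, respectively $\beta\parall r$ with $\beta\in(Q_B)_1$ and $r$ a path parallel to $\beta$ — here Lemma~\ref{lem:vectdec} lets us take $r\in\B_B\cup\overline{\B}_{\Lambda}$, and $r$ is never a vertex nor a nonconstant path of $\B_{A^{\op}}$ since $s(\beta)<t(\beta)$ as $B$ is directed, whereas nonconstant paths of $A^{\op}$ strictly decrease the vertex index — it suffices by bilinearity to show $[\alpha\parall\overline p,\beta\parall r]\in k((Q_B)_1\parall\overline{\B}_{\Lambda})$. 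As the first slots $\alpha,\beta$ already lie in $(Q_B)_1$, this reduces to checking that the substituted elements $r^{\alpha\parall\overline p}$ and $\overline p^{\beta\parall r}$ of $\Lambda$ are $k$-linear combinations of paths lying in $\overline{\B}_{\Lambda}$.

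The decisive observation is the normal form for paths: by the decomposition $\B_{\Lambda}=\B_B\cup\B_{A^{\op}}\cup\overline{\B}_{\Lambda}$, every path in $\B_{\Lambda}$ has the shape $q_n\cdots q_1p_m\cdots p_1$ with $q_i\in(Q_{A^{\op}})_1$ and $p_j\in(Q_B)_1$ (an arrow of $Q_{A^{\op}}$ executed just before an arrow of $Q_B$ produces one of the defining relations $\overline{\rho}_{\Lambda}$), and it lies in $\overline{\B}_{\Lambda}$ exactly when $n,m\geq 1$. Now the arrow being substituted — $\alpha$ inside $r$, or $\beta$ inside $\overline p$ — always sits in the $B$-segment $p_m\cdots p_1$ of its path, and the replacing path is $\overline p$ (resp. $r$); if this replacing path begins on the left with an $A^{\op}$-arrow and there is still a $B$-arrow immediately to the left of the insertion point, we create an illegal $A^{\op}$-then-$B$ juncture and that term vanishes in $\Lambda$; in all remaining cases the insertion occurs at the top of the $B$-segment (or the replacing path has no $A^{\op}$-segment), the two $A^{\op}$-blocks merge into a single $q$-block and the two $B$-blocks into a single $p$-block, both of which a direct length count shows to be nonempty — so the surviving term lies in $\overline{\B}_{\Lambda}$ (or is dropped if it happens to contain an internal relation from $\rho_A$ or $\rho_B$). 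Hence $[\alpha\parall\overline p,\beta\parall r]\in k((Q_B)_1\parall\overline{\B}_{\Lambda})$; since it also lies in the Lie algebra $\mathrm{Ker}(\delta^1_{\Lambda})$, it lies in $\mathcal{J}^{\prime}$, as required. The only genuinely fiddly point — and the main obstacle — is this last piece of bookkeeping: one must run the argument separately for the two bracket summands and, within each, for $r\in\B_B$ versus $r\in\overline{\B}_{\Lambda}$, each time identifying which inserted position survives the relations $\overline{\rho}_{\Lambda}$ and checking that the surviving path keeps both a nonempty $Q_{A^{\op}}$-part and a nonempty $Q_B$-part, so that it lands in $\overline{\B}_{\Lambda}$ and not in $\B_B$.
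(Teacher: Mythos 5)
Your proof is correct and follows essentially the same route as the paper's: the vector-space decomposition is read off from Lemma~\ref{lem:vectdec}, and the ideal property is verified by checking on basis pairs that the Strametz bracket lands in $k((Q_B)_1\parall\overline{\B}_{\Lambda})$ — the paper simply asserts this bracket computation as clear, whereas you carry out the substitution bookkeeping (only note that the membership in $\mathrm{Ker}(\delta^1_{\Lambda})$ should be invoked for the full bracket $[x,y]$, not for an individual basis-pair bracket, which is how your argument is in any case intended).
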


\begin{proof}
  The $k$-vector space decomposition follows from Lemma \ref{lem:vectdec}. We only need to show that $\mathcal{J}^{\prime}$ is a Lie ideal of $\mathrm{Ker}(\delta^1_{\Lambda|A^{\op}})$. Let $\alpha,\beta\in (Q_B)_1$, $p\in \B_B$ and $q\in \overline{\B}_{\Lambda}$. Clearly, we have $[\alpha\parall p, \beta\parall  \overline{q}] \in (Q_B)_1\parall \overline{\B}_{\Lambda}$. The statement follows. 
\end{proof}

The $k$-vector space $$\mathcal{J}:=\mathcal{J}^{\prime}\oplus\mathrm{Ker}(\delta^1_{\Lambda| B})$$ is a Lie ideal of $\mathrm{Ker}(\delta^1_{\Lambda})$. Indeed, let $\alpha\in (Q_B)_1$, $p\in \B_B$,  $\gamma\in (Q_{A^{\op}})_1$ and $r\in \overline{\B}_{\Lambda}$. Then $[\alpha\parall p, \gamma\parall r]\in (Q_{A^{\op}})_1\parall \overline{\B}_{\Lambda}$. We denote $$\mathcal{I}:=\mathrm{Im}(\delta^0_{\Lambda})_{|_{\geq 2}}.$$

\begin{remark} \label{rmk:iso}
  Since $\mathrm{HH}^1(\Lambda)$ is a graded Lie algebra, there is an isomorphism of Lie algebras $\mathrm{Im}(\delta^0_\Lambda)/\mathcal{I}\cong \mathrm{Im}(\delta^0_{\Lambda})_{|_1}$. Since $B$ is directed, then $\mathrm{Im}(\delta^0_B)_{|_1}=\mathrm{Im}(\delta^0_B)$. In addition, it follows from \cite[Lemma 3.5]{LRW} that the dimension of $\mathrm{Im}(\delta^0_{\Lambda})_{|_1}$ is equal to the dimension of $\mathrm{Im}(\delta^0_B)_{|_1}$. Since $\mathrm{Im}(\delta^0_{\Lambda})_{|_1}$ and $ \mathrm{Im}(\delta^0_B)$ are abelian Lie algebras of the same dimension, there is an isomorphism of Lie algebras  $\mathrm{Im}(\delta^0_{\Lambda})_{|_1}\cong \mathrm{Im}(\delta^0_B)$. We call the composition of these two isomorphisms $$\eta_1\colon \mathrm{Im}(\delta^0_{\Lambda})/\mathcal{I}\to \mathrm{Im}(\delta^0_B).$$ Similarly, there is an isomorphism of Lie algebras $$\eta_2\colon\mathrm{Ker}(\delta^1_\Lambda)/\mathcal{J}\to  \mathrm{Ker}(\delta^1_B).$$
\end{remark}

\begin{theorem}\label{Dualmain}
  Let $A$ and  $B$ be directed monomial algebras and let $\Lambda=\Lambda(B,A^{\op})$. We have an exact sequence of Lie algebras
  \begin{center}
    \begin{tikzcd}
      0 \arrow[r] &  \mathcal{J}/\mathcal{I} \arrow[r] & \HH^1(\Lambda) \arrow[r] & \HH^1(B)\arrow[r] & 0.
    \end{tikzcd}
  \end{center}
\end{theorem}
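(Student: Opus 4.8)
The plan is to realize the surjection $\HH^1(\Lambda)\to\HH^1(B)$ from the $k$-vector space splitting $\mathrm{Ker}(\delta^1_\Lambda)=\mathrm{Ker}(\delta^1_B)\oplus\mathcal{J}$ of Lemma~\ref{lem:vectdec} and its refinements, and then to identify the kernel with $\mathcal{J}/\mathcal{I}$. Since $\mathcal{J}$ is a Lie ideal of $\mathrm{Ker}(\delta^1_\Lambda)$ and $\mathrm{Ker}(\delta^1_B)$ is the complementary Lie subalgebra, the map $\eta_2$ of Remark~\ref{rmk:iso} composed with the canonical projection yields a surjective morphism of Lie algebras
\[
  \Phi\colon \mathrm{Ker}(\delta^1_\Lambda)\xrightarrow{\ \eta_2\,\circ\,\mathrm{pr}\ }\mathrm{Ker}(\delta^1_B)\twoheadrightarrow\HH^1(B), \qquad \mathrm{Ker}(\Phi)=\mathrm{Im}(\delta^0_B)\oplus\mathcal{J},
\]
the sum being direct because $\mathrm{Im}(\delta^0_B)\subseteq\mathrm{Ker}(\delta^1_B)$. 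The first goal is then to verify $\mathrm{Im}(\delta^0_\Lambda)\subseteq\mathrm{Ker}(\Phi)$, so that $\Phi$ descends to a map $\psi\colon\HH^1(\Lambda)\to\HH^1(B)$.

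This inclusion is the combinatorial core and is checked along the grading $\mathrm{Im}(\delta^0_\Lambda)=\mathrm{Im}(\delta^0_\Lambda)_{|_1}\oplus\mathcal{I}$. For the piece $\mathcal{I}$: because the relations $\overline{\rho}_{\Lambda}$ forbid a $Q_B$-arrow from following a $Q_{A^{\op}}$-arrow, any nontrivial oriented cycle $p$ of $\Lambda$ at a vertex $e$ factors as $p=qr$ with $r$ a nontrivial $Q_B$-path and $q$ a nontrivial $Q_{A^{\op}}$-path; hence $\delta^0_\Lambda(e\parall p)$ is a combination of terms $a\parall aqr$ with $a\in(Q_{A^{\op}})_1$ and $a\parall qra$ with $a\in(Q_B)_1$, so it lies in $k((Q_{A^{\op}})_1\parall\overline{\B}_\Lambda)\oplus k((Q_B)_1\parall\overline{\B}_\Lambda)$. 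Intersecting with $\mathrm{Ker}(\delta^1_\Lambda)$ and invoking the directness of the decomposition of Lemma~\ref{lem:vectdec} gives $\delta^0_\Lambda(e\parall p)\in\mathrm{Ker}(\delta^1_{\Lambda|B})\oplus\mathcal{J}'=\mathcal{J}=\mathrm{Ker}(\eta_2\circ\mathrm{pr})\subseteq\mathrm{Ker}(\Phi)$. For the degree-$1$ part, split $\delta^0_\Lambda(e\parall e)=\delta^0_B(e\parall e)+\delta^0_{A^{\op}}(e\parall e)$ according to whether the arrows incident to $e$ lie in $Q_B$ or in $Q_{A^{\op}}$; the first summand lies in $\mathrm{Im}(\delta^0_B)$, while the second is a $\delta^1_{A^{\op}}$-cocycle and hence lies in $\mathrm{Ker}(\delta^1_{A^{\op}})\subseteq\mathrm{Ker}(\delta^1_{\Lambda|B})\subseteq\mathcal{J}$. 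Therefore $\delta^0_\Lambda(e\parall e)\in\mathrm{Im}(\delta^0_B)\oplus\mathcal{J}=\mathrm{Ker}(\Phi)$, and $\psi\colon\HH^1(\Lambda)\to\HH^1(B)$ exists and is a surjective Lie morphism.

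It remains to identify $\mathrm{Ker}(\psi)$. Unwinding the quotients gives $\mathrm{Ker}(\psi)=\bigl(\mathrm{Im}(\delta^0_B)\oplus\mathcal{J}\bigr)/\mathrm{Im}(\delta^0_\Lambda)$, and I claim the natural map $\iota_\HH\colon\mathcal{J}/\mathcal{I}\to\HH^1(\Lambda)$, $j+\mathcal{I}\mapsto j+\mathrm{Im}(\delta^0_\Lambda)$, is a well-defined injective Lie morphism with image exactly $\mathrm{Ker}(\psi)$. It is well defined because $\mathcal{I}\subseteq\mathrm{Im}(\delta^0_\Lambda)$ by definition, and a Lie morphism because $\mathcal{J}$ is a Lie ideal of $\mathrm{Ker}(\delta^1_\Lambda)$ and $\mathcal{I}=\mathcal{J}\cap\mathrm{Im}(\delta^0_\Lambda)$ is a Lie ideal of $\mathcal{J}$. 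Surjectivity onto $\mathrm{Ker}(\psi)$ amounts to $\mathcal{J}+\mathrm{Im}(\delta^0_\Lambda)=\mathrm{Im}(\delta^0_B)\oplus\mathcal{J}$, which follows from the degree-$1$ computation since $\delta^0_B(e\parall e)\equiv-\delta^0_{A^{\op}}(e\parall e)\pmod{\mathrm{Im}(\delta^0_\Lambda)}$ with $\delta^0_{A^{\op}}(e\parall e)\in\mathcal{J}$. Injectivity amounts to $\mathcal{J}\cap\mathrm{Im}(\delta^0_\Lambda)=\mathcal{I}$: given $x$ in the intersection, write $x=x_1+x_{\geq 2}$ with $x_1$ the degree-$1$ homogeneous component; since $\mathcal{J}$ is a graded subspace we get $x_1\in\mathcal{J}$, and writing $x_1=\sum_e c_e\,\delta^0_\Lambda(e\parall e)$ its $\mathrm{Ker}(\delta^1_B)$-component $\sum_e c_e\,\delta^0_B(e\parall e)$ must vanish (as $\mathrm{Ker}(\delta^1_B)\cap\mathcal{J}=0$), which by connectedness of $Q_B$ forces the $c_e$ to be constant and hence $x_1=c\,\delta^0_\Lambda\bigl(\sum_e e\parall e\bigr)=0$; so $x=x_{\geq 2}\in\mathcal{I}$. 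Assembling $\psi$ and $\iota_\HH$ gives the asserted exact sequence of Lie algebras.

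The main obstacle is the bookkeeping in the middle paragraph: one has to pin down precisely which basis vectors of $k((Q_\Lambda)_1\parall\B_\Lambda)$ occur in $\delta^0_\Lambda(e\parall p)$ and match them against the block decomposition of $\mathrm{Ker}(\delta^1_\Lambda)$ coming from Lemma~\ref{lem:vectdec}. The structural fact that makes it work is that, thanks to the relations $\overline{\rho}_{\Lambda}$, every nontrivial cycle of $\Lambda$ is a $Q_{A^{\op}}$-path composed after a $Q_B$-path, so $\delta^0$ of such a cycle is supported on exactly the ``mixed'' blocks constituting $\mathcal{J}$; the degree-$1$ case additionally needs the observation that the $Q_{A^{\op}}$-part of $\delta^0_\Lambda(e\parall e)$ is itself a cocycle, together with connectedness to control $\mathrm{Ker}(\delta^0)$.
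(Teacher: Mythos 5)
Your proof is correct and arrives at the same sequence, but by a genuinely more explicit route than the paper. The paper assembles a $3\times 3$ commutative diagram whose top two rows are $0\to\mathcal{I}\to\mathrm{Im}(\delta^0_{\Lambda})\to\mathrm{Im}(\delta^0_B)\to 0$ and $0\to\mathcal{J}\to\mathrm{Ker}(\delta^1_{\Lambda})\to\mathrm{Ker}(\delta^1_{B})\to 0$, gets their exactness from Remark~\ref{rmk:iso} (whose key input is the dimension equality $\dim\mathrm{Im}(\delta^0_{\Lambda})_{|_1}=\dim\mathrm{Im}(\delta^0_B)_{|_1}$ imported from the cited lemma of \cite{LRW}), and then concludes by the Nine Lemma. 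You instead construct the surjection $\HH^1(\Lambda)\to\HH^1(B)$ directly from the splitting $\mathrm{Ker}(\delta^1_{\Lambda})=\mathrm{Ker}(\delta^1_{B})\oplus\mathcal{J}$ of Lemma~\ref{lem:vectdec} and compute its kernel by hand, which forces you to verify explicitly the combinatorial facts the paper's diagram leaves implicit: that $\delta^0_{\Lambda}(e\parall c)$ for a nontrivial cycle $c=qr$ is supported on the two mixed blocks and hence lies in $\mathcal{J}$ (so $\mathcal{I}\subseteq\mathcal{J}$), that $\delta^0_{\Lambda}(e\parall e)=\delta^0_B(e\parall e)+\delta^0_{A^{\op}}(e\parall e)$ with the second summand in $\mathcal{J}$ (giving both $\mathrm{Im}(\delta^0_{\Lambda})\subseteq\mathrm{Im}(\delta^0_B)\oplus\mathcal{J}$ and the surjectivity onto the kernel), and that $\mathcal{J}\cap\mathrm{Im}(\delta^0_{\Lambda})=\mathcal{I}$. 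What your approach buys is self-containedness and transparency about which maps realize the sequence; what the paper's buys is brevity, at the cost of outsourcing the dimension count and the exactness bookkeeping. One caveat: your injectivity step invokes connectedness of $Q_B$ to force the coefficients $c_e$ to be constant, which is not among the stated hypotheses; this is not a defect peculiar to your argument, since the paper's route needs the same thing --- the dimension equality behind Remark~\ref{rmk:iso} amounts to $Q_B$ and $Q_{\Lambda}$ having the same number of connected components, and the statement itself fails without such an assumption (take $B=k\times k$ and $A$ the path algebra of $A_2$, where $\mathcal{J}/\mathcal{I}$ is one-dimensional while $\HH^1(\Lambda)=0$) --- but you should flag it as an implicit standing assumption rather than use it silently.
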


\begin{proof}
  We have the following commutative diagram
  \begin{equation}\label{diag:com}
    \begin{tikzcd}
      0 \arrow[r]& \mathcal{I} \arrow[r, "\iota_1"] \arrow[d, hook]
      & \mathrm{Im}(\delta^0_{\Lambda}) \arrow[d, hook] \arrow[r,"\eta_1"] & \mathrm{Im}(\delta^0_B) \arrow[d, hook] \arrow[r] & 0\\
      0 \arrow[r,] & \mathcal{J} \arrow[r, "\iota_2"]\arrow[d, twoheadrightarrow]
      &  \mathrm{Ker}(\delta^1_{\Lambda})  \arrow[r, "\eta_2"]\arrow[d, twoheadrightarrow] & \mathrm{Ker}(\delta^1_{B})  \arrow[r]\arrow[d, twoheadrightarrow] & 0\\
      0 \ar{r} &  \mathcal{J}/\mathcal{I} \arrow[r] & \HH^1(\Lambda) \arrow[r] & \HH^1(B) \ar{r} & 0 
    \end{tikzcd}
  \end{equation}
  where $\iota_1,\iota_2$ are the canonical inclusions. By Remark \ref{rmk:iso} the first two rows are exact sequences of Lie algebras. Applying the Nine lemma to Diagram \ref{diag:com} we have that the last row is an exact sequence of vector spaces. Since the maps are induced by Lie algebra maps, this is an exact sequence of Lie algebras.
\end{proof}

In degree one we have the following decomposition of $\HH^1(\Lambda)$:

\begin{theorem}\label{thm:deg-one}
  Let $A$ and  $B$ be directed monomial algebras and let $\Lambda=\Lambda(B,A^{\op})$. Then we have the following isomorphism as Lie algebras
  \[
    \HH^1(\Lambda)_{|_1}\cong \HH^1(B)_{|_1}\oplus \HH^1(\Lambda|B)_{|_1}.
  \]
\end{theorem}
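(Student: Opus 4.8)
The plan is to extract the degree-one part of the short exact sequence of Theorem~\ref{Dualmain} and show that in degree one it splits as Lie algebras. First I would note that since $\Lambda$, $B$, and the relevant kernels/images are graded by path length (with the shift-by-$1$ convention from Strametz), every map in Diagram~\ref{diag:com} is a morphism of graded Lie algebras, so I may restrict the entire diagram to its degree-one component. The object $\mathcal{I} = \mathrm{Im}(\delta^0_\Lambda)_{|_{\geq 2}}$ has no degree-one part, so the leftmost column in degree one reads $0 \to 0 \to \mathrm{Im}(\delta^0_\Lambda)_{|_1} \xrightarrow{\eta_1} \mathrm{Im}(\delta^0_B)_{|_1} \to 0$, and hence $(\mathcal{J}/\mathcal{I})_{|_1} = \mathcal{J}_{|_1}$. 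Thus the degree-one row of the bottom of the diagram becomes
\begin{equation*}
  \begin{tikzcd}
    0 \arrow[r] & \mathcal{J}_{|_1} \arrow[r] & \HH^1(\Lambda)_{|_1} \arrow[r] & \HH^1(B)_{|_1} \arrow[r] & 0,
  \end{tikzcd}
\end{equation*}
an exact sequence of Lie algebras, and the task reduces to identifying $\mathcal{J}_{|_1}$ with $\HH^1(\Lambda|B)_{|_1}$ and producing a Lie-algebra splitting.

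Next I would compute $\mathcal{J}_{|_1}$ explicitly. Recall $\mathcal{J} = \mathcal{J}' \oplus \mathrm{Ker}(\delta^1_{\Lambda|B})$ with $\mathcal{J}' = \mathrm{Ker}(\delta^1_\Lambda) \cap k((Q_B)_1 \parallel \overline{\B}_\Lambda)$. An arrow $\alpha \in (Q_B)_1$ is parallel only to paths $p \in \B_B$ or $\overline{p} \in \overline{\B}_\Lambda$, and the latter have length $\geq 2$ since $\overline{p}$ must use at least one arrow of $Q_{A^{\op}}$ and at least one of $Q_B$ (an arrow of $Q_B$ alone cannot be parallel to an arrow of $Q_B$ in a directed algebra). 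Hence $\mathcal{J}'_{|_1} = 0$, so $\mathcal{J}_{|_1} = \mathrm{Ker}(\delta^1_{\Lambda|B})_{|_1}$. Since $\mathcal{I}_{|_1} = 0$ and $\mathrm{Im}(\delta^0_{\Lambda|B}) \subseteq \mathrm{Im}(\delta^0_\Lambda)$ with the degree-one part of $\mathrm{Im}(\delta^0_\Lambda)$ already accounted for by $\eta_1$, I would check that $\mathrm{Im}(\delta^0_{\Lambda|B})_{|_1} = 0$ — indeed any relative inner derivation vanishing on the arrows of $Q_B$ comes from $\delta^0(e\parallel e)$, which for directed $B$ produces only arrows of $Q_B$ in degree one, forcing it to be zero. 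Therefore by Lemma~\ref{relcomb}, $\HH^1(\Lambda|B)_{|_1} = \mathrm{Ker}(\delta^1_{\Lambda|B})_{|_1}/\mathrm{Im}(\delta^0_{\Lambda|B})_{|_1} = \mathrm{Ker}(\delta^1_{\Lambda|B})_{|_1} = \mathcal{J}_{|_1}$ as Lie algebras.

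For the splitting, I would use that the vector-space decomposition $\mathrm{Ker}(\delta^1_\Lambda) = \mathrm{Ker}(\delta^1_{\Lambda|A^{\op}}) \oplus \mathrm{Ker}(\delta^1_{\Lambda|B})$ of Lemma~\ref{lem:vectdec} restricts to degree one, and that $\mathrm{Ker}(\delta^1_{\Lambda|A^{\op}})_{|_1} = \mathrm{Ker}(\delta^1_B)_{|_1} \oplus \mathcal{J}'_{|_1} = \mathrm{Ker}(\delta^1_B)_{|_1}$. Composing the inclusion $\mathrm{Ker}(\delta^1_B)_{|_1} \hookrightarrow \mathrm{Ker}(\delta^1_\Lambda)_{|_1}$ (which lands in the summand $\mathrm{Ker}(\delta^1_{\Lambda|A^{\op}})_{|_1}$, disjoint from $\mathcal{J}_{|_1}$) with the projection onto $\HH^1(\Lambda)_{|_1}$ gives a section $\HH^1(B)_{|_1} \to \HH^1(\Lambda)_{|_1}$ of the surjection $\eta_2$; since $\mathrm{Ker}(\delta^1_B)_{|_1}$ is a Lie subalgebra of $\mathrm{Ker}(\delta^1_\Lambda)$ (one checks that the bracket of two elements of $k((Q_B)_1 \parallel \B_B)$ stays in $k((Q_B)_1 \parallel \B_B)$), this section is a Lie-algebra homomorphism, and the complementary summand $\mathcal{J}_{|_1}$ is an ideal. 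I expect the main obstacle to be bookkeeping: verifying carefully that each of the two summands in degree one is closed under the bracket and that they intersect trivially after passing to cohomology — i.e. that no nonzero element of $\mathrm{Ker}(\delta^1_B)_{|_1}$ becomes inner in $\Lambda$, which again follows since $\mathrm{Im}(\delta^0_\Lambda)_{|_1} \cong \mathrm{Im}(\delta^0_B)_{|_1}$ via $\eta_1$ and this isomorphism is compatible with the inclusions. Once these closure and triviality checks are in place, the semidirect-product decomposition $\HH^1(\Lambda)_{|_1} \cong \HH^1(B)_{|_1} \ltimes \mathcal{J}_{|_1} \cong \HH^1(B)_{|_1} \oplus \HH^1(\Lambda|B)_{|_1}$ follows, and in fact the bracket between the two summands can be shown to land in $\mathcal{J}_{|_1}$ only — but since $\HH^1(\Lambda|B)_{|_1}$ is an ideal and the statement only claims a direct sum of Lie algebras, I would verify that the action of $\HH^1(B)_{|_1}$ on $\mathcal{J}_{|_1}$ is in fact trivial in degree one (the bracket $[\alpha\parallel p,\ \beta\parallel\overline{q}]$ with both factors in degree one would have degree one, but $\beta\parallel\overline{q}$ forces $\overline{q}$ of length $\geq 2$, a contradiction), giving the honest direct sum.
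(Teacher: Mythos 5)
Your proposal is correct in substance and follows essentially the same route as the paper's proof: restrict the exact sequence of Theorem~\ref{Dualmain} to degree one (where $\mathcal{I}$ and $\mathcal{J}'$ have no component), identify the kernel with $\HH^1(\Lambda|B)_{|_1}=\mathrm{Ker}(\delta^1_{\Lambda|B})_{|_1}=\mathrm{Ker}(\delta^1_{A^{\op}})_{|_1}$, and use the degree-one part of Lemma~\ref{lem:vectdec} together with Remark~\ref{rmk:iso} to see that the sequence splits as a direct sum of Lie algebras. Two of your local justifications are misstated, though the facts you need are true and already follow from what you cite: first, $\delta^0_\Lambda(e\parall e)$ contributes $a\parall a$ for \emph{all} arrows $a$ of $Q_\Lambda$ adjacent to $e$, not only arrows of $Q_B$, so your argument for $\mathrm{Im}(\delta^0_{\Lambda|B})_{|_1}=0$ does not work as written — but this vanishing is exactly the degree-one injectivity of $(\mathcal{J}/\mathcal{I})_{|_1}\to\HH^1(\Lambda)_{|_1}$, which you already have from Theorem~\ref{Dualmain}; second, the degree-one elements of $\mathcal{J}_{|_1}$ are of the form $\beta\parall\gamma$ with $\beta,\gamma\in(Q_{A^{\op}})_1$, not $\beta\parall\overline{q}$ with $\overline{q}\in\overline{\B}_\Lambda$, and the cross-bracket with $\alpha\parall p$, $\alpha,p\in(Q_B)_1$, vanishes simply because no Strametz substitution between pairs of the two types is possible — precisely the paper's observation that there are no parallel arrows between $Q_B$ and $Q_{A^{\op}}$.
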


\begin{proof}
  By Theorem \ref{Dualmain} we have an exact sequence of Lie algebras
  \begin{center}
    \begin{tikzcd}
      0 \arrow[r] &  \mathrm{Ker}(\delta^1_{\Lambda|B})_{|_1}\arrow[r] & \HH^1(\Lambda)_{|_1}   \arrow[r] & \HH^1(B)_{|_1}\arrow[r] & 0.
    \end{tikzcd}
  \end{center}
  By Lemma \ref{lem:vectdec} we have the decomposition of vector spaces: $\mathrm{Ker}(\delta^1_{\Lambda})_{|_1}= \mathrm{Ker}(\delta^1_{A^{\op}})_{|_1}\oplus \mathrm{Ker}(\delta^1_{B})_{|_1}$ which becomes a direct product of Lie algebras since there are no pairs of parallel arrows $\alpha \parall \beta$ such that $\alpha\in Q_A$ and $\beta\in Q_B$, or vice versa.  By Remark \ref{rmk:iso} we have an isomorphism of Lie algebras $\mathrm{Im}(\delta_{\Lambda}^0)_{|_1}\cong \mathrm{Im}(\delta_{B}^0)_{|_1}$. Since $\HH^1(\Lambda|B)_{|_1}=\mathrm{Ker}(\delta^1_{\Lambda|B})_{|_1}=\mathrm{Ker}(\delta^1_{A^{\op}})_{|_1}$, the statement follows.
\end{proof}

\begin{corollary}
  \label{bipartite}
  Assume that $A, B$ are radical square zero basic algebras such that $Q_A$ and $Q_B$ do not have loops. Let $\Lambda=\Lambda(B, A^{\op})$. Then we have 
  \[
    \HH^1(\Lambda)\cong \HH^1(B)\oplus \HH^1(\Lambda|B).
  \]
  In addition, $\HH^1(B)$ is isomorphic to a direct product of $\mathfrak{sl}_{n_{i,j}}$ where $n_{i,j}$ denotes the number of parallel arrows in $Q_B$ between the vertices $e_i$ and $e_j$. Similarly, $\HH^1(\Lambda|B)$ is isomorphic to a direct product of $ \mathfrak{gl}_{m_{i,j}}$, where $m_{i,j}$ denotes  the number of parallel arrows in $Q_A$ between two vertices $e_i$ and $e_j$.
\end{corollary}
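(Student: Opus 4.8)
The plan is to reduce the statement to the degree-one decomposition of Theorem~\ref{thm:deg-one}, combined with the radical square zero computations of Theorem~\ref{radsqrzero} and Proposition~\ref{prop:kerrad}. Since $A$ and $B$ are monomial, so is $\Lambda=\Lambda(B,A^{\op})$, hence each of $\HH^1(\Lambda)$, $\HH^1(B)$ and $\HH^1(\Lambda|B)$ carries the path-length grading and is accessible through Proposition~\ref{Stra2.6} and Lemma~\ref{relcomb}.

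The crucial step is to check that, under the hypotheses (radical square zero, no loops), each of $\HH^1(\Lambda)$, $\HH^1(B)$ and $\HH^1(\Lambda|B)$ is concentrated in the degree-one part $\HH^1_{|_1}$. For $B$ and $A^{\op}$ this is clear: a path in $\B_B$ (resp.\ $\B_{A^{\op}}$) parallel to an arrow must itself be an arrow, since a vertex parallel to an arrow would force that arrow to be a loop; hence $k((Q_B)_1\parall \B_B)$, and with it $\mathrm{Im}(\delta^0_B)$ and $\HH^1(B)$, lives in degree one, and likewise for $A^{\op}$. For $\Lambda$ one uses $\B_\Lambda=\B_B\cup\B_{A^{\op}}\cup\overline{\B}_\Lambda$: because $B$ and $A^{\op}$ are radical square zero, every element of $\overline{\B}_\Lambda$ has the form $qp$ with $p$ an arrow of $Q_B$ and $q$ an arrow of $Q_{A^{\op}}$, and since arrows of $Q_B$ strictly raise and arrows of $Q_{A^{\op}}$ strictly lower the vertex label while there are no loops, a short case analysis on the labels shows such a $qp$ is never parallel to an arrow of $Q_\Lambda$. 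Consequently $k((Q_\Lambda)_1\parall \B_\Lambda)$ is concentrated in degree one, and as $\mathrm{Im}(\delta^0_\Lambda)$ is too, so is $\HH^1(\Lambda)$.

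Granting this, the first isomorphism is Theorem~\ref{thm:deg-one}: $\HH^1(\Lambda)=\HH^1(\Lambda)_{|_1}\cong \HH^1(B)_{|_1}\oplus\HH^1(\Lambda|B)_{|_1}=\HH^1(B)\oplus\HH^1(\Lambda|B)$, and, as in the proof of that theorem, the sum is a Lie algebra direct product because there is no parallel pair of arrows with one arrow in $Q_B$ and the other in $Q_{A^{\op}}$. For the explicit form of $\HH^1(B)$, I would apply Theorem~\ref{radsqrzero} to the pair $(B,E)$ with $E$ the semisimple subalgebra of $B$: then $Q_C=Q_B$, the set $\overline{E}$ is empty, every $\mathfrak{I}_\alpha$ vanishes, and the abelian summand $\bigl(\prod_{[\alpha]}kI_\alpha\bigr)/\mathrm{Im}(\delta^0_B)$ is trivial since $\mathrm{Im}(\delta^0_B)$ spans all of $\prod_{[\alpha]}kI_\alpha$; this leaves $\HH^1(B)\cong\prod_{[\alpha]\in(\overline{Q_B})_1}\mathfrak{sl}_{|\alpha|}(k)$, i.e.\ a product of the $\mathfrak{sl}_{n_{i,j}}(k)$. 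For $\HH^1(\Lambda|B)$, the proof of Theorem~\ref{thm:deg-one} together with Lemma~\ref{lem:vectdec} gives $\HH^1(\Lambda|B)=\HH^1(\Lambda|B)_{|_1}=\mathrm{Ker}(\delta^1_{\Lambda|B})_{|_1}=\mathrm{Ker}(\delta^1_{A^{\op}})_{|_1}$, and Proposition~\ref{prop:kerrad} applied with the semisimple subalgebra, for which all the ideals $\mathfrak{I}_\alpha$ vanish, identifies this with $\prod_{[\alpha]\in(\overline{Q_{A^{\op}}})_1}\mathfrak{gl}_{|\alpha|}(k)$; since reversing arrows preserves the size of each parallel class, this is $\prod_{i,j}\mathfrak{gl}_{m_{i,j}}(k)$.

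I expect the main obstacle to be the degree-one concentration of $\HH^1(\Lambda)$ in the second step: $\Lambda$ is only radical \emph{cube}, not square, zero, so the mixed length-two paths of $\overline{\B}_\Lambda$ genuinely survive in the Strametz complex, and the real content is that the orientation constraints of the dual extension together with the absence of loops keep any such path from being parallel to an arrow — it is precisely this that upgrades the graded splitting of Theorem~\ref{thm:deg-one} to an honest decomposition of $\HH^1(\Lambda)$. Once that is settled, everything else is bookkeeping with Theorem~\ref{thm:deg-one}, Theorem~\ref{radsqrzero} and Proposition~\ref{prop:kerrad}.
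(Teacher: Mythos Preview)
Your approach mirrors the paper's proof almost exactly: both argue that the decomposition in Theorem~\ref{thm:deg-one} extends to all of $\HH^1(\Lambda)$ because everything is concentrated in degree one, and both justify this by claiming that no mixed length-two path $qp\in\overline{\B}_\Lambda$ (with $p\in(Q_B)_1$, $q\in(Q_{A^{\op}})_1$) can be parallel to an arrow of $Q_\Lambda$.

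That claim, however, does not follow from the stated hypotheses, and your ``short case analysis on the labels'' does not go through. With $p:i\to j$ in $Q_B$ one has $i<j$, and with $q:j\to l$ in $Q_{A^{\op}}$ one has $l<j$; the composite $qp:i\to l$ only fails to be parallel to an arrow when $i=l$, but nothing forces $i=l$ (or prevents $i<l$ or $l<i$). Concretely, take $Q_B$ with arrows $\alpha:1\to 2$ and $p_1:1\to 3$ (radical square zero, no loops, directed) and $Q_A$ with a single arrow $2\to 3$, so $Q_{A^{\op}}$ has $q_1:3\to 2$. Then $q_1p_1:1\to 2$ is parallel to $\alpha\in(Q_B)_1$, and there are no relations in $\Lambda$ at all. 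A direct count gives $\dim\HH^1(\Lambda)=2$ while $\HH^1(B)=0$ and $\dim\HH^1(\Lambda|B)=1$, so the claimed decomposition fails in this example. The paper's proof contains the very same step (``since there are no loops \dots\ there are no arrows parallel to $q_1p_1$''), so this is a gap shared with the source; the statement appears to require an extra hypothesis (for instance bipartiteness, as the remark following the corollary hints).

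Two smaller points specific to your write-up: invoking Theorem~\ref{radsqrzero} imports a characteristic-zero assumption that the corollary does not make (the paper instead cites S\'anchez-Flores for the structure of $\HH^1(B)$), and your assertion that $\mathrm{Im}(\delta^0_B)$ spans $\prod_{[\alpha]}kI_\alpha$ is not true in general---for $Q_B$ with arrows $1\to2$, $2\to3$, $1\to3$ and radical square zero, the product is three-dimensional but $\mathrm{Im}(\delta^0_B)$ is only two-dimensional, and indeed $\HH^1(B)\cong k$ rather than a product of $\mathfrak{sl}$'s. This again reflects that the corollary as stated needs stronger hypotheses.
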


\begin{proof}
  Consider the decomposition:
  \begin{equation*}
    \begin{split}
      k(Q_{\Lambda})_1\parall \B_{\Lambda}= k(Q_B)_1\parall \B_B \oplus   k(Q_B)_1\parall \overline{\B}_{\Lambda} \oplus
      k(Q_{A^{\op}})_1\parall \B_{A^{\op}} \oplus
      k(Q_{A^{\op}})_1\parall \overline{\B}_{\Lambda}
    \end{split}
  \end{equation*}
  We show that both $k(Q_B)_1\parall \overline{\B}_{\Lambda}$ and $k(Q_{A^{\op}})_1\parall \overline{\B}_{\Lambda}$ are zero. Indeed, since $A, B$ are radical square zero algebras, an element in $\overline{\B}_{\Lambda}$ is of the form $q_1p_1$ where $p_1,q_1$ are arrows in $Q_B, Q_{A^{\op}}$, respectively. Since there are no loops in $Q_A, Q_B$, then there are no arrows $\alpha$ in $Q_{A^{\op}}$ or in $Q_B$ parallel to $q_1p_1$. 

  It follows from Lemma \ref{lem:vectdec} that $\mathrm{Ker}(\delta^1_{\Lambda})= \mathrm{Ker}(\delta^1_{B})\oplus \mathrm{Ker}(\delta^1_{A^{\op}})$. Since $A,B$ are radical square zero algebras and since $Q_A$ and $Q_B$ do not have any loops,  then $\mathrm{Ker}(\delta^1_{\Lambda})\cong \mathrm{Ker}(\delta^1_{\Lambda})_{|_{1}}$.
  Consequently, $\HH^1(\Lambda) \cong \HH^1(\Lambda)_{|_{1}}$. Similarly, $\HH^1(B)\cong \HH^1(B)_{|_{1}}$ and $\HH^1(\Lambda|B)\cong \HH^1(\Lambda|B)_{|_{1}}= \mathrm{Ker}(\delta^1_{A^{\op}})$. The first part of the statement then follows from Theorem \ref{thm:deg-one}. The second part of the statement follows from \cite[Theorem 2.9]{Sanchez}.
\end{proof}

Recall that a finite quiver $Q$ is \textit {bipartite} if every vertex is a source or a sink. Corollary \ref{bipartite} holds in particular if $Q_A$ and $Q_B$ are bipartite. 

\begin{example}
  Let $A$, $B$ and $\Lambda$ be the algebras in Example~\ref{ex:runningExampleSection4}. We have that 
  \begin{equation*}
    \HH^1(B) \cong \mathfrak{sl}_2
  \end{equation*}
  and 
  \begin{equation*}
    \HH^1(\Lambda | B) \cong k
  \end{equation*}
  Therefore we get by Corollary~\ref{bipartite} that 
  \begin{equation*}
    \HH^1(\Lambda) \cong \mathfrak{sl}_2 \oplus k.
  \end{equation*}
\end{example}

\begin{example}\label{ex:string}
  Let $Q$ be a quiver such that the underlying graph is a tree. Then $\Lambda(kQ,kQ^{\op})$ has been considered for example in \cite[\S 5.1]{Conde}. In \cite{Z} the Ringel dual has been considered.  A particular  case of the previous example is when the quiver of $B$ is of type $A_n$, that is, 
  \[\begin{tikzcd}
      1 & 2 & \dots & n
      \arrow["{\alpha_1}", from=1-1, to=1-2]
      \arrow["{\alpha_2}", from=1-2, to=1-3]
      \arrow["{\alpha_{n-1}}", from=1-3, to=1-4]
    \end{tikzcd}\]
  Then the Hochschild cohomology of $\Lambda$ has been studied in \cite[Example 4]{dPX} and \cite[\S 2.4]{XZ}. In particular, we have
  $$
  \dim \HH^i(\Lambda)=\begin{cases}
    n & i=0\\
    \frac{1}{2}(n-1)n , & i=1,2\\
    0, & i>2
  \end{cases}
  $$
  A computation shows that 
  $$
  \dim \HH^i(\Lambda|B)=\begin{cases}
    n & i=0\\
    \frac{1}{2}(n-1)n , & i=1\\
    0, & i\geq 2
  \end{cases}
  $$
  Since $Q$ is a tree, then $\HH^1(kQ)=0$. It is easy to show that $\HH^1(\Lambda)\cong\HH^1(\Lambda|B)$ as Lie algebras. Indeed, by Theorem \ref{Dualmain} it is enough to show that $\mathcal{J}/\mathcal{I}\cong \HH^1(\Lambda| B)$. First note that in this case $\mathrm{Ker}(\delta^1_{\Lambda|B})=\HH^1(\Lambda|B)$. For each vertex $e_i$, consider a cycle $c_i$ in $\mathcal{B}_\Lambda$ starting and ending at $e_i$. Note that $c_i\in \overline{\mathcal{B}}_{\Lambda}$, that is, $c_i$ is of the form $q_h\cdots q_1p_k\cdots p_1$ such that $q_i$ are arrows in $ \B_{B^{\op}}$ and $p_j$ are arrows in $ \B_B$. If we denote for each arrow $\alpha_i$ the corresponding arrow of $\mathcal{B}_{B^{\op}}$ by $\alpha^*_i$, then $\delta^{0}(e_i\parall c_i)= \alpha_i\parall c_i\alpha_i -\alpha^*_i\parall \alpha^*_ic_i$. We observe that $\alpha_i\parall c_i\alpha_i\in \mathcal{J}'$ and $\alpha^*_i\parall\alpha^*_ic_i\in \mathrm{Ker}(\delta^1_{\Lambda|B})$. 

  Note that each $\alpha_i\parall\overline{p'}\in \mathcal{J}'$ is of the form $\alpha_i\parall c'_i\alpha_i$ for some cycle $c'_i$ in $\mathcal{B}_\Lambda$. Indeed, since $s(\alpha_i)=s(\overline{p'})=e_i$, then $\overline{p'}=q_m\dots q_1p_l\dots p_3 \alpha_{i+1}\alpha_i$ for some arrows $p_i$ in $\mathcal{B}_B$ and $q_j$ in $\mathcal{B}_{B^{\op}}$. Note that $\alpha_{i+1}$ must be an arrow of $\overline{p'}$, otherwise $t(\overline{p'})\neq e_{i+1}$. Similarly, since $t(\alpha_i)=t(\overline{p'})=e_{i+1}$, then $\overline{p'}=\alpha^*_{i+1}q_{m-1}\dots q_1p_l\dots p_3 \alpha_{i+1}\alpha_i$. Hence, $c'_i$ is the cycle $\alpha^*_{i+1} \dots \alpha_{i+1}$ and
  $\overline{p'}=c'_i\alpha_i$. 

  Therefore, there is a bijection between a basis of $\mathcal{J}'$ and a basis of $\mathcal{I}=\mathrm{Im}(\delta^0_{\Lambda})_{|_{\geq 2}}$. Consequently, $(\mathcal{J}'\oplus \mathrm{Ker}(\delta^1_{\Lambda|B}))/ \mathcal{I} \cong \HH^1(\Lambda|B)$.
  It follows from \cite[Corollary 4.12]{Str} that $\HH^1(\Lambda)$ is a solvable Lie algebra.
\end{example}

Note that in general $\HH^1(\Lambda| B)$ is not isomorphic to $\mathcal{J}/\mathcal{I}$ as the following example shows. 

\begin{example}\label{ex:notcong}
  Let $Q$ be the quiver
  \[\begin{tikzcd}
      & 2 \\
      1 & 3 & 4
      \arrow["{\alpha_2}"', from=1-2, to=2-2]
      \arrow["{\alpha_1}", from=2-1, to=2-2]
      \arrow["{\alpha_3}", from=2-2, to=2-3]
    \end{tikzcd}\]    
  and let $B=kQ$. Let $\Lambda=\Lambda(B, B^{\op})$. We will show that $\mathcal{J}/\mathcal{I}$ is 
  not isomorphic to $\HH^1(\Lambda|B)$. Denote the arrows of $\mathcal{B}_{B^{\op}}$ by $\alpha^*_i$. A basis of $\mathcal{I}=\mathrm{Im}
  (\delta^0_{\Lambda})_{|_{\geq 2}}$ is given by $\{\alpha_1\parall\alpha^*_3\alpha_3\alpha_1+\alpha_2\parall\alpha^*_3\alpha_3\alpha_2+
  \alpha_2^*\parall\alpha^*_2\alpha^*_3\alpha_3+ 
  \alpha^*_1\parall\alpha^*_1\alpha^*_3\alpha_3\}$. A basis of 
  $\mathcal{J}'$ is given by 
  $\{\alpha_1\parall \alpha_3^*\alpha_3\alpha_1, 
  \alpha_2\parall\alpha_3^*\alpha_3\alpha_2\}$. A basis of $\mathrm{Ker}
  (\delta^1_{\Lambda|B})$ is given by
  $$\{\alpha^*_1\parall\alpha^*_1,\alpha^*_2\parall\alpha^*_2, \alpha^*_3\parall\alpha^*_3, 
  \alpha^*_1\parall\alpha^*_1\alpha^*_3\alpha_3, 
  \alpha^*_2\parall\alpha^*_2\alpha^*_3\alpha_3\}.$$
  Since the dimension of $\mathcal{J}/\mathcal{I}$ is 6, while the dimension of $\HH^1(\Lambda|B)=\mathrm{Ker}
  (\delta^1_{\Lambda|B})$ is 5, the statement follows. 
\end{example}

\section{Contracted fundamental group}
\label{relfund}

\subsection{Contracted fundamental group of a bound quiver}
In this section we introduce the notion of contracted fundamental group relative to a subalgebra. We first introduce the notion of relative presentation.

\begin{definition}
  Let $A$ be an algebra and $B$ be a subalgebra of $A$ such that $Q_B$ is a subquiver of $Q_A$. We define a \textit{presentation of} $A$ \textit{relative to} $B$, denoted by $(\nu_A,\nu_B)$, to be the choice of two surjective maps $\nu_A: kQ_A\to A$ and $\nu_B: kQ_B\to B$ such that the following diagram commutes:
  \begin{equation*}
    \begin{tikzcd}
      kQ_B \ar[two heads]{r}{\nu_B} \ar[hook,swap]{d}{\iota_1} & B \ar[hook]{d}{\iota_2}\\
      kQ_A \ar[two heads]{r}{\nu_A} & A
    \end{tikzcd}    
  \end{equation*}
\end{definition}

Note that $\iota_1(\mathrm{Ker}(\nu_B))\subseteq \mathrm{Ker}(\nu_A)$, therefore the relations in $B$ are a subset of the relations in $A$. If $r$ is a relation in $I_B$ which is a minimal relation in $I_A$, then $r$ is a minimal relation in $I_B$. 

Given two walks $w,w'$ in $Q$ we say that $w$ and $w'$ are parallel if $s(w)=s(w')$ and $t(w)=t(w')$.

In order to introduce the contracted fundamental group of $(Q_A,I_A)$ relative to $(Q_B,I_B)$ we first consider a normal subgroup of $\pi_1(Q_A,I_A)$.

\begin{definition}
  Assume that $A$ is a connected algebra and $B$ a subalgebra of $A$ such that $Q_B$ is a subquiver of $Q_A$. Choose a relative presentation $(\nu_A, \nu_B)$. Let $N_B$ be the subgroup of $\pi_1(Q_A,I_A, *)$ generated by 
  \[
    \{[v^{-1}({u')}^{-1}uv]\ |\  u, u' \textrm{ are parallel walks in } Q_B,\  v \textrm{ walk  in } Q_A  \textrm{ with } s(v)=*\}.
  \]
\end{definition}

Pictorially, a generator in $N_B$ can be represented as:

\begin{center}
  \begin{tikzpicture}
    \node at (0,0) {$*$};
    \node at (2,0) {$\sbu$};
    \node at (4,0) {$\sbu$};

    \path
    (0.25,0.1) edge[->] node[anchor = south] {$v$} (2-0.25,0.1)
    (2-0.25,-0.1) edge[->] node[anchor = north] {$v^{-1}$} (0.25,-0.1);

    \path[dashed]
    (2.25,0.1) edge[->,bend left] node[anchor = south] {$u$} (4-0.25,0.1) 
    (4-0.25,-0.1) edge[->,bend left] node[anchor = north] {$(u')^{-1}$}(2.25,-0.1);
  \end{tikzpicture}
\end{center}

Note that $N_B$ is a normal subgroup of $\pi_1(Q_A,I_A,*)$. Indeed, if we let $[g]\in \pi_1(Q_A,I_A,*)$, then $[g]^{-1}[(v)^{-1}({u')}^{-1}uv][g] \in N$. Hence, for any element $[w]=[w_l]\cdots [w_1]$ generated by the elements $[w_l],\dots, [w_1]\in N_B$, we have $[g]^{-1}[w][g]=[g]^{-1}[w_l][g]\cdots [g]^{-1}[w_1][g]\in N_B$.

\begin{definition}
  Assume that $A$ is a connected algebra and $B$ a subalgebra of $A$ such that $Q_B$ is a subquiver of $Q_A$. Choose a relative presentation $(\nu_A, \nu_B)$. The \textit{contracted fundamental group} of $(Q_A,I_A)$ relative to $(Q_B,I_B)$ is defined as the quotient group
  $\pi_1(Q_A,I_A,*)/N_B$.
\end{definition}

\begin{remark}
  The contracted fundamental group differs from the relative fundamental set as defined in topology, see for example Section 4.1 in \cite{Hatcher}. The relative fundamental set in topology sees the subspace as the base point, while our construction is akin to locally contracting every connected component of the subspace. Note that it is also different from the fundamental group of the quotient space as we are ``contracting'' locally. 
\end{remark}

\begin{example}
  Let $A,B$, and $\Lambda$ be as in Example \ref{Ex:notsolv}. Since $\Lambda$ is a monomial algebra, then $\pi_1(Q_A,I_A)$ is a free group on two generators, say $[\alpha_2^{-1}\alpha_1]$  and $[\beta\alpha_1]$. Note that $N_B$ is the normal subgroup generated by $[\alpha_2^{-1}\alpha_1]$ hence $\pi_1(Q_A,I_A)/N\cong \mathbb{Z}$.
\end{example}

\begin{example}\label{ex:group}
  We follow \cite[Example 1.3]{Br}. Let $G$ be a finite group. If we consider $Q$ to be the quiver with a single vertex and with arrows in bijection with the elements in $G$. One can consider the presentation $\nu: kQ\to kG$ and setting $I:=\mathrm{Ker}(\nu)$. Then the group $\pi_1(Q, I)$ is isomorphic to $G$. Similarly, for a subgroup $H \leq G$, we get a subquiver of $G$. Note that $N_B$ is the normal closure of $H$. In particular, if $H$ is normal we have $\pi_1(Q, I)/N_B\cong G/H$. 
\end{example}

Similar to the classical case we introduce the notion of relative parade data. Note that every relative parade data is a normal parade data, as defined in \cite[\S 1]{FarkasGreenMarcos2000}.

\begin{definition}
  Let $B$ be a subalgebra of $A$ such that $Q_B$ is a subquiver of $Q_A$. Assume further that $Q_A$ is connected. Then we define a \textit{relative parade data} as follows. Fix a basepoint $* \in (Q_A)_0$. Let $\{\Gamma_i\}$ be the connected components of $Q_B$. Fix, for each connected component, a vertex $x_i \in \Gamma_i$ and a walk $\gamma_{*,x_i}$ from $*$ to $x_i$ in $Q_A$. Choose a local parade data $\gamma^{\Gamma_i}_{x_i}$ for each connected component, i.e. for every vertex in $\Gamma_i$ a walk from $x_i$ to that vertex.  Define
  $$
  \gamma_{*,y} = \gamma_{x_i,y}^{\Gamma_i}\gamma_{*,x_i}.
  $$

  The relative parade data is then 
  $$
  \gamma = \{\gamma_{*,y}\}.
  $$
  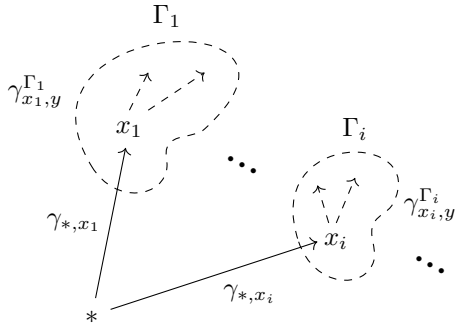
\begin{figure}[h!]
    \begin{tikzpicture}
      \node at (0,0) {$*$};
      \node at (1,4) {$\Gamma_1$};
      \node at (3.5,2.5) {$\Gamma_i$};
      \node at (2-0.15,2+0.08) {$\sbu$};
      \node at (2,2) {$\sbu$};
      \node at (2+0.15,2-0.08) {$\sbu$};
      \node at (4.5-0.15,0.666+0.08) {$\sbu$};
      \node at (4.5,0.666) {$\sbu$};
      \node at (4.5+0.15,0.666-0.08) {$\sbu$};
      \node at (0.5,2.5) {$x_1$};
      \node at (3.25,1) {$x_i$};
      \node at (-0.75, 3) {$\gamma_{x_1,y}^{\Gamma_1}$};
      \node at (4.5,1.5) {$\gamma_{x_i,y}^{\Gamma_i}$};
      
      \path[draw,use Hobby shortcut,closed=true,dashed]
      (0,2) .. (1,2) .. (1,2.25) .. (1.5,2.5);

      \path[draw,use Hobby shortcut,closed=true,dashed]
      (3-0.25,1.75-0.75) .. (4-0.25,1.5-0.75) .. (4-0.25,2-0.75) .. (4.25-0.25,2.25-0.75);

      \path
      (0.05,0.25) edge[->] node[anchor = east] {$\gamma_{*,x_1}$} (0.45,2.25)
      (0.25,0.1) edge[->] node[anchor = north west] {$\gamma_{*,x_i}$} (3,1)
      (0.5,2.75) edge[->,dashed] (0.75,3.25)
      (0.75,2.75) edge[->,dashed] (1.5,3.25)
      (3.15,1.25) edge[->,dashed] (3,1.75)
      (3.25,1.25) edge[->,dashed] (3.5,1.85);
    \end{tikzpicture}
    \caption{Relative parade data}
  \end{figure} 
\end{definition}

Given $A$ and $B$ as above, note that it is always possible to choose a relative parade data. If $B$ is equal to $kQ_0$, then a choice of relative parade data is equivalent to a choice of normal parade data. 

\begin{remark}
  Note that the construction of relative parade data is very similar to the construction usually done to construct a basis for the topology of the universal cover of a semi-locally simply-connected space. See, for example, Section 1.3 in \cite{Hatcher}.
\end{remark}

For a fixed parade data $\gamma$ and a given path $p$, we define

\begin{equation*}
  \cl_{\gamma}(p) = [\gamma_{*,t(p)}^{-1}p\gamma_{*,s(p)}].
\end{equation*}

The injective map

\begin{equation*}
  \theta_{\nu,\gamma} : \Hom(\pi_1(Q,I),k) \rightarrow \HH^1(A) 
\end{equation*}

defined in \cite{MD2, DepeSao}, is given by

\begin{equation*}
  f \mapsto [p \mapsto f(\cl_{\gamma}(p))p].
\end{equation*}

It was shown in \cite{MD2, DepeSao} that this map is independent of choice of parade data. 

As in the classical case we obtain:

\begin{proposition}
  There is an injective map
  \[
    \theta_{\nu,\gamma}^{(\A|B)}: \mathrm{Hom}(\pi_1(Q_\A,I_\A)/N_B,k)\to \mathrm{Der}_{B^e}(\A).
  \]
  Moreover, $\theta_{\nu,\gamma}^{(\A|B)}$ does not depend on the choice of relative parade data.  
\end{proposition}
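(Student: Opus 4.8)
The plan is to mimic the classical construction of $\theta_{\nu,\gamma}$, recalled just above, and then to verify two things: that the resulting derivation is a $B^e$-module homomorphism, and that the map descends from $\Hom(\pi_1(Q_A,I_A),k)$ to $\Hom(\pi_1(Q_A,I_A)/N_B,k)$. Explicitly, given $f \in \Hom(\pi_1(Q_A,I_A)/N_B,k)$, pull it back along the quotient map $\pi_1(Q_A,I_A) \to \pi_1(Q_A,I_A)/N_B$ to get an element $\widetilde f \in \Hom(\pi_1(Q_A,I_A),k)$, and set $\theta_{\nu,\gamma}^{(A|B)}(f) := \theta_{\nu,\gamma}(\widetilde f)$, which by the classical result is a well-defined derivation $p \mapsto \widetilde f(\cl_\gamma(p))\,p$. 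So the content is to check this derivation lands in $\Der_{B^e}(A)$ and that the assignment $f \mapsto \theta_{\nu,\gamma}^{(A|B)}(f)$ is injective and parade-data independent.

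First I would establish the $B^e$-equivariance. By Lemma \ref{restB}, a derivation is a $B^e$-module morphism if and only if it vanishes on $B$, so it suffices to show $\widetilde f(\cl_\gamma(\alpha)) = 0$ for every arrow $\alpha$ of $Q_B$ (derivations are determined on arrows, and vanishing on the arrows of $Q_B$ forces vanishing on all of $B$). Here the relative parade data does the work: for an arrow $\alpha$ in a connected component $\Gamma_i$ of $Q_B$ with source $y = s(\alpha)$ and target $z = t(\alpha)$, we have $\gamma_{*,y} = \gamma^{\Gamma_i}_{x_i,y}\gamma_{*,x_i}$ and $\gamma_{*,z} = \gamma^{\Gamma_i}_{x_i,z}\gamma_{*,x_i}$, so
\[
  \cl_\gamma(\alpha) = [\gamma_{*,z}^{-1}\alpha\gamma_{*,y}] = [\gamma_{*,x_i}^{-1}(\gamma^{\Gamma_i}_{x_i,z})^{-1}\alpha\,\gamma^{\Gamma_i}_{x_i,y}\gamma_{*,x_i}].
\]
Now $\alpha$ is a walk in $Q_B$ and $u := (\gamma^{\Gamma_i}_{x_i,z})^{-1}\alpha\,\gamma^{\Gamma_i}_{x_i,y}$ is a closed walk in $Q_B$ at $x_i$, hence parallel to the trivial walk $u' = x_i$, so $\cl_\gamma(\alpha)$ is precisely a generator of $N_B$ (with $v = \gamma_{*,x_i}$). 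Therefore its class in $\pi_1(Q_A,I_A)/N_B$ is trivial, and $\widetilde f(\cl_\gamma(\alpha)) = f(\overline{\cl_\gamma(\alpha)}) = f(1) = 0$. This shows $\theta_{\nu,\gamma}^{(A|B)}(f) \in \Der_{B^e}(A)$.

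For injectivity, suppose $\theta_{\nu,\gamma}^{(A|B)}(f) = 0$; then $\theta_{\nu,\gamma}(\widetilde f) = 0$, and by injectivity of the classical $\theta_{\nu,\gamma}$ we get $\widetilde f = 0$; since $\pi_1(Q_A,I_A) \to \pi_1(Q_A,I_A)/N_B$ is surjective, $\widetilde f = 0$ forces $f = 0$. Independence of the choice of relative parade data follows from the corresponding statement in the classical case together with the observation that any two relative parade data are in particular two parade data for $Q_A$: concretely, if $\gamma$ and $\gamma'$ are relative parade data, then $\theta_{\nu,\gamma}(\widetilde f)$ and $\theta_{\nu,\gamma'}(\widetilde f)$ both equal $\theta_\nu(\widetilde f)$ as elements of $\HH^1(A)$ — but we need equality already at the level of derivations in $\Der_{B^e}(A)$, not just modulo inner derivations. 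The main obstacle is precisely this point: one must check that changing the parade data changes $p \mapsto \widetilde f(\cl_\gamma(p))p$ by the \emph{zero} derivation (not merely an inner one), which amounts to showing $\widetilde f(\cl_\gamma(\alpha)) = \widetilde f(\cl_{\gamma'}(\alpha))$ for every arrow $\alpha$ of $Q_A$; this reduces to the fact that $\cl_\gamma(\alpha)$ and $\cl_{\gamma'}(\alpha)$ differ by a product of conjugates of classes $[\gamma_{*,t(\alpha)}^{-1}\gamma'_{*,t(\alpha)}]$ and $[\gamma_{*,s(\alpha)}^{-1}\gamma'_{*,s(\alpha)}]$, and one checks — using that the walks $\gamma^{\Gamma_i}_{x_i,\cdot}$ are chosen inside $Q_B$ — that the resulting discrepancy lies in $N_B$, so is killed by $\widetilde f$. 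I would verify this last containment by the same parallel-walks-in-$Q_B$ bookkeeping as in the equivariance step.
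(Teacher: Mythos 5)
Your construction and the first two verifications coincide with the paper's proof: the paper likewise defines $\theta^{(A|B)}_{\nu,\gamma}$ as $\theta_{\nu,\gamma}$ precomposed with the (injective, by left-exactness of $\Hom(-,k)$) dual of the quotient map $\pi_1(Q_A,I_A)\twoheadrightarrow \pi_1(Q_A,I_A)/N_B$, proves that the image lands in $\Der_{B^e}(A)$ via Lemma~\ref{restB} together with the fact that $\cl_{\gamma}(p)\in N_B$ for every path $p$ of $Q_B$ (your explicit identification of $\cl_{\gamma}(\alpha)$ as a generator of $N_B$, with $u$ a closed walk in $Q_B$ and $u'$ trivial, is exactly this point), and obtains injectivity from injectivity of $\theta_{\nu,\gamma}$.

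The gap is in your last step, the independence of the relative parade data --- and you correctly sensed that this is the delicate point, since the classical statement only gives equality in $\HH^1(A)$. However, your proposed repair fails: the discrepancy does not lie in $N_B$ in general. Writing $\lambda_y:=\widetilde{f}\bigl([\gamma_{*,y}^{-1}\gamma'_{*,y}]\bigr)$ for two relative parade data $\gamma,\gamma'$, the fact that the local walks $\gamma^{\Gamma_i}_{x_i,\cdot}$ stay inside $Q_B$ only forces $\lambda_y$ to be constant on each connected component of $Q_B$; it does not make the classes $[\gamma_{*,y}^{-1}\gamma'_{*,y}]$ trivial modulo $N_B$, and for an arrow $\alpha$ joining two different components of $Q_B$ one has $\widetilde{f}(\cl_{\gamma'}(\alpha))-\widetilde{f}(\cl_{\gamma}(\alpha))=\lambda_{s(\alpha)}-\lambda_{t(\alpha)}$, which need not vanish. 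The two derivations differ by the inner derivation given by $\sum_y\lambda_y e_y$, which vanishes on $B$ but is in general a nonzero element of $\Der_{B^e}(A)$. Concretely, for the Kronecker algebra $A$ with arrows $\alpha,\beta\colon 1\to 2$ and $B=kQ_0$, the subgroup $N_B$ is trivial, yet the relative parade data with $\gamma_{*,2}=\alpha$ and with $\gamma'_{*,2}=\beta$ send the dual of the generator $[\beta^{-1}\alpha]$ to the distinct derivations $\alpha\mapsto 0,\ \beta\mapsto-\beta$ and $\alpha\mapsto\alpha,\ \beta\mapsto 0$. So strict independence at the level of $\Der_{B^e}(A)$ is not attainable by any bookkeeping (note that the paper's own one-line justification, which simply cites the classical independence, glosses over exactly the same issue); what does hold, and what Theorem~\ref{thm:relativeMap} actually uses, is independence after composing with the projection $\Der_{B^e}(A)\to\HH^1(A|B)$, because the difference computed above vanishes on $B$ and is therefore a relative inner derivation.
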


\begin{proof}
  Fix a relative parade data $\gamma$, let $\iota : \Der_{B^e}(A,A) \rightarrow \Der_k(A,A)$ be the canonical inclusion and let $\theta_{\nu,\gamma}^{(\A|B)} := \theta_{\nu,\gamma} \circ \iota$. We claim that $\operatorname{Im}(\theta_{\nu,\gamma}^{(\A|B)}) \subseteq \Der_{B^e}(A)$. By Lemma~\ref{restB}, it suffices to show that $(\theta_{\nu,\gamma}\circ \iota)(f)(b) = 0$ for all $b \in B$. Let $p$ be a path between $e_i$ and $e_j$ in $B$. Then, $p$ is in one of the connected components $\Gamma_i$ of $B$. Then 
  $$
  (\theta_{\nu,\gamma} \circ \iota)(f)(p) = f([\cl_{\gamma}(p)]_{N_B})p.
  $$
  Moreover, $[\cl_{\gamma}(p)]_{N_B} = [e]_{N_B}$ as $\cl_{\gamma}(p) \in N_B$ by the construction of the relative parade data. Therefore, 
  $$
  f([\cl_{\gamma}(p)]_{N_B})(p) = 0.
  $$
  Injectivity follows from the fact that $\iota$ is injective as $\Hom(-,k)$ is left-exact and $\theta_{\nu, \gamma}$ is injective. As $\theta_{\nu,\gamma}$ does not depend on the choice of parade data, $\theta_{\nu,\gamma}^{(\A|B)}$ does not depend on the choice of relative parade data.
\end{proof}

\begin{lemma}\label{lemma:relativeParade}
  Let $\Par(Q_A|Q_B)$ denote the set of relative parade data. Then 
  \begin{equation*}
    \langle [\cl_{\gamma}(p)] \mid p \in Q_*^B, \gamma \in \Par(Q_A|Q_B) \rangle = N_B.
  \end{equation*}
\end{lemma}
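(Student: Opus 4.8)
The plan is to prove the two inclusions separately; neither is difficult, and the one point needing care is the bookkeeping with relative parade data. A useful preliminary observation is a multiplicativity property of $\cl_{\gamma}$ that reduces arbitrary walks in $Q_B$ to arrows.

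First I would record the elementary identities, valid for any fixed relative parade data $\gamma$: for composable paths $p,q$ in $Q_A$ one has $\cl_{\gamma}(pq)=\cl_{\gamma}(p)\,\cl_{\gamma}(q)$ — insert $\gamma_{*,s(p)}^{-1}\gamma_{*,s(p)}$ and use $s(p)=t(q)$ — and for an arrow $\alpha$ one has $\cl_{\gamma}(\alpha^{-1})=\cl_{\gamma}(\alpha)^{-1}$. Hence, for any walk $w=\alpha_{r}^{\varepsilon_{r}}\cdots\alpha_{1}^{\varepsilon_{1}}$ in $Q_B$ (with $\alpha_{i}\in(Q_B)_1$ and $\varepsilon_{i}=\pm1$) one obtains $\cl_{\gamma}(w)=\cl_{\gamma}(\alpha_{r})^{\varepsilon_{r}}\cdots\cl_{\gamma}(\alpha_{1})^{\varepsilon_{1}}$, which lies in the subgroup generated by $\{\cl_{\gamma}(p)\mid p\in Q_{*}^{B},\ \gamma\in\Par(Q_A|Q_B)\}$ since arrows are paths.

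For $\langle\cl_{\gamma}(p)\rangle\subseteq N_B$: given a path $p$ in $Q_B$, all of its vertices lie in a single connected component $\Gamma$ of $Q_B$, so, substituting $\gamma_{*,s(p)}=\gamma^{\Gamma}_{x_{\Gamma},s(p)}\gamma_{*,x_{\Gamma}}$ and $\gamma_{*,t(p)}=\gamma^{\Gamma}_{x_{\Gamma},t(p)}\gamma_{*,x_{\Gamma}}$ into the definition of $\cl_{\gamma}(p)$ and rearranging, we get $\cl_{\gamma}(p)=[v^{-1}(u')^{-1}uv]$ with $v=\gamma_{*,x_{\Gamma}}$ a walk in $Q_A$ starting at $*$, and $u=p\,\gamma^{\Gamma}_{x_{\Gamma},s(p)}$, $u'=\gamma^{\Gamma}_{x_{\Gamma},t(p)}$ parallel walks in $Q_B$; thus $\cl_{\gamma}(p)\in N_B$, and the subgroup it generates is contained in $N_B$.

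For the reverse inclusion it suffices to exhibit each generator $g=[v^{-1}(u')^{-1}uv]$ of $N_B$ — where $u,u'$ are parallel walks from some $y$ to some $z$ in a component $\Gamma$ of $Q_B$ and $v$ is a walk in $Q_A$ from $*$ to $y$ — inside $\langle\cl_{\gamma}(p)\rangle$. I would choose a relative parade data $\gamma$ based at $*$ with $x_{\Gamma}=y$, with $\gamma_{*,y}=v$, and with $\gamma^{\Gamma}_{x_{\Gamma},z}=w_{0}$ some walk from $y$ to $z$ in $Q_B$ (take $w_{0}=e_{y}$ when $z=y$), the data on all other components being irrelevant; then $\gamma_{*,z}=w_{0}v$, so $\cl_{\gamma}(u)=[v^{-1}w_{0}^{-1}uv]$ and $\cl_{\gamma}(u')=[v^{-1}w_{0}^{-1}u'v]$, and a direct computation using $w_{0}^{-1}w_{0}\sim e$ gives $\cl_{\gamma}(u')^{-1}\cl_{\gamma}(u)=[v^{-1}(u')^{-1}uv]=g$. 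Applying the multiplicativity step of the second paragraph to the walks $u$ and $u'$ then expresses $g$ as a word in $\{\cl_{\gamma}(\alpha)^{\pm1}\mid\alpha\in(Q_B)_1\}$, so $g\in\langle\cl_{\gamma}(p)\rangle$, which completes the proof. The only delicate point is checking that the $\gamma$ chosen here is an honest relative parade data — in particular that a local parade data assigns the trivial walk to its own base vertex, which is what forces $\gamma_{*,y}$ to equal the prescribed $v$ — and that all endpoint and composability conditions for the walks involved hold; everything else is routine manipulation inside the non-abelian group $\pi_1(Q_A,I_A,*)$ using only the relations $\alpha^{-1}\alpha\sim s(\alpha)$ and $\alpha\alpha^{-1}\sim t(\alpha)$.
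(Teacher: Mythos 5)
Your proof is correct and follows essentially the same route as the paper: both directions come from unpacking $\cl_{\gamma}(p)=[\gamma_{*,x_{\Gamma}}^{-1}(\gamma^{\Gamma}_{x_{\Gamma},t(p)})^{-1}p\,\gamma^{\Gamma}_{x_{\Gamma},s(p)}\gamma_{*,x_{\Gamma}}]$ so that it has exactly the shape of a generator of $N_B$, and conversely choosing the relative parade data adapted to a given generator. The paper's proof is a one-line version of this ("we get the same generators as $\gamma$ varies"), and your multiplicativity step reducing arbitrary parallel walks $u,u'$ in $Q_B$ to arrows, together with the explicit adapted choice of $\gamma$, supplies precisely the detail that the terse statement leaves implicit, since the $N_B$-generators involve walks rather than paths.
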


\begin{proof}
  Let $\gamma$ be a relative parade data. Note that we get the same generators if we let $v = \gamma_{*,x_i}$, $u = p\gamma_{x_i,s(p)}$ and let $u' = \gamma_{x_i, t(p)}$ and we vary over all the relative parade datas $\gamma$. 
\end{proof}

\begin{theorem}\label{thm:relativeMap}
  We have an induced monomorphism
  $$
  \theta_{\nu}^{(\A | B)} : \mathrm{Hom}(\pi_1(Q_\A,I_\A)/N_B,k)\to \HH^1(\A|B).
  $$
  Furthermore, the following diagram is a pullback square for all relative presentations $\nu$ 
  \begin{equation*}
    \begin{tikzcd}
      \Hom(\pi_1(Q_\A,I_\A),k) \ar{r}{\theta_{\nu}} & \HH^1(\A) \\
      \Hom(\pi_1(Q_\A,I_\A)/N_B,k) \ar{u}{\iota} \ar[swap]{r}{\theta_{\nu}^{(\A |B)}} & \HH^1(\A|B) \ar[swap]{u}{\iota_\HH}
    \end{tikzcd}
  \end{equation*}
\end{theorem}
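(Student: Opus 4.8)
The plan is to obtain $\theta_\nu^{(\A|B)}$ by composing the map $\theta_{\nu,\gamma}^{(\A|B)}\colon\Hom(\pi_1(Q_\A,I_\A)/N_B,k)\to\Der_{B^e}(\A)$ of the previous proposition with the canonical surjection $\Der_{B^e}(\A)\twoheadrightarrow\HH^1(\A|B)$; parade-independence is inherited from that of $\theta_{\nu,\gamma}^{(\A|B)}$. Concretely, for $f\in\Hom(\pi_1(Q_\A,I_\A)/N_B,k)$ the representative derivation $D_f$ is the diagonal derivation with $D_f(\alpha)=f(\cl_\gamma(\alpha))\,\alpha$ on each arrow $\alpha\colon i\to j$ of $Q_\A$ (with $\cl_\gamma(\alpha)$ read modulo $N_B$) and $D_f(e_l)=0$; note $D_f\in\Der_{E^e}(\A)$ in the sense of Remark~\ref{relker}. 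Commutativity of the square is then immediate: $\iota_\HH$ (Lemma~\ref{lemma:emb}), the two projections to $\HH^1$, and $\iota=\Hom(-,k)$ applied to $\pi_1(Q_\A,I_\A)\twoheadrightarrow\pi_1(Q_\A,I_\A)/N_B$ are all ``the identity on representatives'', so $\iota_\HH\circ\theta_\nu^{(\A|B)}$ and $\theta_\nu\circ\iota$ both send $f$ to the class of $D_f$ in $\HH^1(\A)$. Injectivity of $\theta_\nu^{(\A|B)}$ follows formally: if $\theta_\nu^{(\A|B)}(f)=0$ then $\theta_\nu(\iota(f))=\iota_\HH(\theta_\nu^{(\A|B)}(f))=0$, so $\iota(f)=0$ since $\theta_\nu$ is injective \cite{MD2,DepeSao}, so $f=0$ since $\Hom(-,k)$ is left exact.

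For the pullback, both $\theta_\nu$ and $\iota_\HH$ are injective, so the fibre product $\Hom(\pi_1(Q_\A,I_\A),k)\times_{\HH^1(\A)}\HH^1(\A|B)$ maps bijectively, via $(g,h)\mapsto\theta_\nu(g)=\iota_\HH(h)$, onto $\mathrm{Im}(\theta_\nu)\cap\mathrm{Im}(\iota_\HH)\subseteq\HH^1(\A)$; under this identification the canonical comparison map out of $\Hom(\pi_1(Q_\A,I_\A)/N_B,k)$ becomes $f\mapsto\theta_\nu(\iota(f))$, which is automatically injective. Hence the square is a pullback if and only if
\[
  \mathrm{Im}(\theta_\nu)\cap\mathrm{Im}(\iota_\HH)=\theta_\nu\!\big(\iota(\Hom(\pi_1(Q_\A,I_\A)/N_B,k))\big),
\]
and ``$\supseteq$'' is the commutativity already shown. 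Two descriptions cut ``$\subseteq$'' down to one computation. First, by Lemma~\ref{restB} a class $[D]\in\HH^1(\A)$ lies in $\mathrm{Im}(\iota_\HH)$ iff $D-[a,-]$ vanishes on $B$ for some $a\in\A$, i.e. $D|_B=[a,-]|_B$. Second, by left-exactness of $\Hom(-,k)$ the subgroup $\iota(\Hom(\pi_1(Q_\A,I_\A)/N_B,k))$ is exactly $\{g\in\Hom(\pi_1(Q_\A,I_\A),k)\mid g|_{N_B}=0\}$; unwinding the generators $[v^{-1}(u')^{-1}uv]$ of $N_B$ and passing to the first homology of the graph $Q_\A$, $g|_{N_B}=0$ says precisely that the $1$-cochain $\lambda$ with $\lambda_\alpha:=g(\cl_\gamma(\alpha))$ has zero holonomy around every closed walk in $Q_B$, equivalently $\lambda_\alpha=\mu_{t(\alpha)}-\mu_{s(\alpha)}$ on all arrows of $Q_B$ for some $\mu\colon(Q_\A)_0\to k$ (using $(Q_\A)_0=(Q_B)_0$, so the potentials on the components of $Q_B$ assemble into one function).

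It therefore remains to prove the following \textbf{key lemma}: for $g\in\Hom(\pi_1(Q_\A,I_\A),k)$ with diagonal derivation $D_g$, one has $D_g|_B=[a,-]|_B$ for some $a\in\A$ if and only if $\lambda_\alpha:=g(\cl_\gamma(\alpha))$ has the form $\mu_{t(\alpha)}-\mu_{s(\alpha)}$ on $(Q_B)_1$. The ``if'' direction is easy: take $a=\sum_l\mu_l e_l$; then $[a,\beta]=(\mu_{t(\beta)}-\mu_{s(\beta)})\beta=D_g(\beta)$ for every arrow $\beta$ of $Q_B$ and $[a,e_l]=0=D_g(e_l)$, so these two $E^e$-linear derivations agree on algebra generators of $B$, hence on $B$. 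The ``only if'' direction is the main obstacle and the one genuine computation: writing $a=\sum_{k,l}e_k a e_l$ and decomposing $e_l a e_l=a_l^0 e_l+(\text{paths of length}\ge1)$, one checks that for an arrow $\beta\colon i\to j$ of $Q_B$ the coefficient of $\beta$ in $a\beta-\beta a$ equals $a_j^0-a_i^0$, whereas in $D_g(\beta)=\lambda_\beta\beta$ it is $\lambda_\beta$; comparing gives $\lambda_\beta=\mu_{t(\beta)}-\mu_{s(\beta)}$ with $\mu_l:=a_l^0$. Feeding the key lemma into the two descriptions gives ``$\subseteq$'': if $\theta_\nu(g)=[D_g]\in\mathrm{Im}(\iota_\HH)$ then $\lambda|_{Q_B}$ is a coboundary, so $g$ vanishes on $N_B$, so $g=\iota(f)$ for a unique $f$, whence $\theta_\nu(g)=\theta_\nu(\iota(f))$ lies in the right-hand side. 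This proves the displayed equality, hence that the square is a pullback; the monomorphism was already obtained above. I expect everything except that coefficient computation — extracting the potential $\mu$ from $a$ by inspecting only the length-$\le1$ part of the commutator — to be purely formal.
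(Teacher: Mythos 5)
Your proposal is correct in substance and, for the pullback half, takes a genuinely different route from the paper; the construction of $\theta_{\nu}^{(A|B)}$, its injectivity via $\iota_{\HH}\circ\theta_{\nu}^{(A|B)}=\theta_{\nu}\circ\iota$, and the commutativity of the square are the same as in the paper. For the pullback, the paper tests the universal property against an arbitrary space $V$, asserts that the representative $\theta_{\nu}(f(v))$ annihilates every path of $B$, and then invokes Lemma~\ref{lemma:relativeParade} (that the classes $\cl_{\gamma}(p)$, over all relative parade data, generate $N_B$). You instead use injectivity of $\theta_{\nu}$ and $\iota_{\HH}$ to identify the fibre product with $\mathrm{Im}(\theta_{\nu})\cap\mathrm{Im}(\iota_{\HH})$, and reduce everything to your key lemma (the diagonal derivation $D_g$ agrees on $B$ with some $[a,-]$ iff $\lambda_{\alpha}=g(\cl_{\gamma}(\alpha))$ is a potential difference on $(Q_B)_1$) together with the holonomy characterisation of $g(N_B)=0$; Lemma~\ref{lemma:relativeParade} is not needed. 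What your route buys is precision at exactly the delicate point: $\theta_{\nu}\circ f=\iota_{\HH}\circ g$ is an equality of cohomology classes, so the diagonal representative need only agree on $B$ with an inner derivation, and your coefficient computation absorbs this ambiguity by extracting the potential $\mu_{l}=a^{0}_{l}$, whereas the paper's wording evaluates a class on paths as if the inner term were absent. The one caveat is generality: the theorem is stated for all relative presentations, and the paper's conventions allow non-admissible ideals and non-primitive idempotents, while your extraction of $a^{0}_{l}$ and of the coefficient of $\beta$ in $[a,\beta]$ uses the decomposition $A\cong kQ_{0}\oplus kQ_{1}\oplus\nu(kQ_{\geq 2})$, i.e.\ an admissible presentation; in that setting the computation $[a,\beta]\equiv(a^{0}_{t(\beta)}-a^{0}_{s(\beta)})\beta$ modulo longer paths is correct, but outside it the key lemma would need a separate argument.
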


\begin{proof}
  We have the following commutative diagram 
  \begin{equation*}
    \begin{tikzcd}
      \Hom(\pi_1(Q_\A,I_\A),k) \ar{r}{\theta_{\nu, \gamma}} & \Der_{k}(\A) \ar{r} & \HH^1(\A) \\
      \Hom(\pi_1(Q_\A,I_\A)/N,k) \ar{u}{\iota} \ar[swap]{r}{\theta_{\nu,\gamma}^{(\A | B)}} & \Der_{B^e}(\A) \ar{u}{\iota_D} \ar{r} & \HH^1(\A|B) \ar[swap]{u}{\iota_\HH}
    \end{tikzcd}
  \end{equation*}
  where the vertical arrows and the composition of the top row are monomorphims. Therefore, we get an induced monomorphism 
  $$
  \theta_{\nu}^{(\A |B)} : \mathrm{Hom}(\pi_1(Q_\A,I_\A)/N,k)\to \HH^1(\A|B)
  $$
  by the composition 
  \begin{equation*}
    \begin{tikzcd}
      \Hom(\pi_1(Q_A,I_A)/N,k) \ar[hook]{r}{\theta_{\nu,\gamma}^{(A|B)}} & \Der_{B^e}(A) \ar[two heads]{r} & \HH^1(A|B).
    \end{tikzcd}
  \end{equation*}
  Let us show that the square is a pullback square. Assume that $V$ is a vector space together with maps 
  $f : V \rightarrow \Hom(\pi_1(Q_A,I_A),k)$ and $g : V \rightarrow \HH^1(A|B)$ such that $\theta_{\nu} \circ f = \iota_{\HH} \circ g$. Fix a dual basis $[\eta_i]^*$ of $\Hom(\pi_1(Q_A,I_A),k)$, where $[\eta_i]$ varies over all equivalence classes of walks. Then, $f(v) = \sum_{i \in I} f(v)_i$, where $f(v)_i = \lambda_i[\eta_i]^*$. By commutativity, we must have that 
  \begin{equation*}
    \theta_{\nu}(f(w))(p) = \sum_{i \in I} \lambda_i\theta_{\nu}([\eta_i]^*)(p) 
    = \sum_{i \in I} \lambda_i[\eta_i]^*(\cl_{\gamma}(p))p 
    = 0
  \end{equation*}
  for all paths $p \in B$. Note that $[\text{cl}_{\gamma}(p)]^*$ will be a basis element in $\Hom(\pi_1(Q,I),k)$. Therefore, we must have that $[\eta_i]^*(\cl_{\gamma}(p)) = 0$ for all $\gamma$ and all $i \in I$ as $\theta_{\nu,\gamma}$ is independent of the choice of relative parade data. Moreover, by Lemma~\ref{lemma:relativeParade}, we must therefore have that $[\eta_i]^*(N_B) = 0$ for all $i \in I$. Therefore, by the universal property of quotients, for each $f(v)_i = [\eta_i]^*$ there exists a unique $\tilde{f}(v)_i \in \Hom(\pi_1(Q,I)/N_{B}, k)$ such that $\iota(\tilde{f}(v)_i) = f(v)_i$. Thus, we can define a unique map $h : V \rightarrow \Hom(\pi_1(Q,I)/N_B,k)$ fulfilling the universal property, by $v \mapsto \sum_{i \in I} \tilde{f}(v)_i$. Therefore, the square is a pullback square.
\end{proof}

\begin{remark}
  The square in Theorem~\ref{thm:relativeMap} is rarely a pushout square. Indeed, if we let $B = A$, then $\HH^1(A|B)$ and $\pi_1(Q,I)/N_B$ are trivial, thus the map $\theta_{\nu} : \Hom(\pi_1(Q,I),k) \rightarrow \HH^1(A)$ must be an isomorphism for the square to be a pushout, which rarely happens as the image of $\theta_{\nu}$ is the diagonal derivations. 
\end{remark}

\begin{example}
  Let $\A$ be the path algebra of the quiver 
  \begin{center}
    \begin{tikzpicture}[commutative diagrams/every diagram]
      \node at (0,0) {$1$};
      \node at (2,0) {$2$};
      \node at (4,0) {$3$};
      \node at (6,0) {$4$};
      
      \path
      (1.75,0.1) edge[->] node[anchor = south] {$\alpha_1$} (0.25,0.1)
      (1.75,-0.1) edge[->] node[anchor = north] {$\alpha_2$} (0.25,-0.1)
      (4.25,0.1) edge[->] node[anchor = south] {$\beta_1$} (5.75,0.1)
      (4.25,-0.1) edge[->] node[anchor = north] {$\beta_2$} (5.75,-0.1)
      (3.75,0.1) edge[->] node[anchor = south] {$\delta_1$} (2.25,0.1)
      (3.75,-0.1) edge[->] node[anchor = north] {$\delta_2$} (2.25,-0.1);
    \end{tikzpicture}
  \end{center}
  quotient by the ideal $I = (\alpha_i\delta_j)$ and let $B$ be the path algebra of the subquiver 
  \begin{center}
    \begin{tikzpicture}[commutative diagrams/every diagram]
      \node at (0,0) {$1$};
      \node at (2,0) {$2$};
      \node at (4,0) {$3$};
      \node at (6,0) {$4.$};
      
      \path
      (1.75,-0.1) edge[->] node[anchor = north] {$\alpha_2$} (0.25,-0.1)
      (4.25,0.1) edge[->] node[anchor = south] {$\beta_1$} (5.75,0.1)
      (4.25,-0.1) edge[->] node[anchor = north] {$\beta_2$} (5.75,-0.1);
    \end{tikzpicture}
  \end{center}
  Let us pick a relative parade data. We fix the basepoint $* = 3$, choose $x_1 = 2$ and $x_2 = 4$. Pictorially,  
  \begin{center}
    \begin{tikzpicture}[commutative diagrams/every diagram]
      \node at (0,0) {$1$};
      \node at (2,0) {$2$};
      \node at (4,0) {$3$};
      \node at (6,0) {$4$};
      \node at (1,1.1) {$\Gamma_1$};
      \node at (5,1.1) {$\Gamma_2$};
      \node at (2,-1) {$x_1 = 2$};
      \node at (4,-1) {$* = 3$};
      \node at (6,-1) {$x_2 = 4$};
      
      \path
      (1.75,0.1) edge[->] (0.25,0.1)
      (1.75,-0.1) edge[->] (0.25,-0.1)
      (4.25,0.1) edge[->] (5.75,0.1)
      (4.25,-0.1) edge[->] (5.75,-0.1)
      (3.75,0.1) edge[->] (2.25,0.1)
      (3.75,-0.1) edge[->] (2.25,-0.1);

      \draw[dashed]
      (5,0) ellipse (1.5 and 0.75)
      (2,0) circle (0.2)
      (6,0) circle (0.2);

      \draw
      (3.8,0.2) -- (4.2,0.2) -- (4.2,-0.2) -- (3.8,-0.2) -- (3.8,0.2);
    \end{tikzpicture}
  \end{center}
  where $\Gamma_1$ is the connected component 
  \begin{center}
    \begin{tikzpicture}[commutative diagrams/every diagram]
      \node at (0,0) {$1$};
      \node at (2,0) {$2$};
      \node at (1,1.1) {$\Gamma_1$};
      
      \path
      (1.75,0) edge[->] node[anchor=north] {$\alpha_2$} (0.25,-0);

      \draw[dashed]
      (2,0) circle (0.2)
      (1,0) ellipse (1.5 and 0.75);
    \end{tikzpicture}
  \end{center}
  Then, we choose $\gamma_{*,x_1} = \delta_1$  and $\gamma_{*,x_2} = \beta_2$. For $\Gamma_1$ we choose the local parade data given by $\gamma^{\Gamma_1}_{x_1,1} = \alpha_2$ and $\gamma_{x_1,2}^{\Gamma_1} = e_2$. For $\Gamma_2$ we choose the local parade data given by $\gamma^{\Gamma_2}_{x_2,3} = \beta_1^{-1}$ and $\gamma_{x_2,2}^{\Gamma_1} = e_4$. As all vertices are in $B$, we are done. Note that, by Example~\ref{Example2}
  $$
  \HH^1(A|B) \cong \mathfrak{sl}_{2}(k) \times k\rtimes k.
  $$
  The contracted fundamental group of $A$ relative to $B$ is a free group on two generators, say $[\delta_1^{-1}(\alpha_2)^{-1}\alpha_1\delta_1]$ and $[\delta_2^{-1}\delta_1]$. Therefore, we have that $\Hom(\pi_1(Q_A,I_A)/N,I_A),k)$ is spanned by $[\delta_1^{-1}(\alpha_2)^{-1}\alpha_1\delta_1]^*$ and $[\delta_2^{-1}\delta_1]^*$. Let us compute the image of
  $$
  \theta^{(A|B)}_{\nu} : \Hom(\pi_1(Q_A,I_A)/N,k) \rightarrow \HH^1(A|B) 
  $$
  with respect to our choice of relative parade data. Note that 
  $$
  \theta([\delta_1^{-1}(\alpha_2)^{-1}\alpha_1\delta_1]^*) = \alpha_1 \parall  \alpha_1
  $$
  whilst 
  $$
  \theta([\delta_1^{-1}\delta_2]^*) = \delta_2 \parall  \delta_2.
  $$
  Therefore the image of $\theta$ is given by the span of the two diagonal derivations.
\end{example}

\subsection{Contracted fundamental group of monomial algebras}
In this section we provide another, more computable, description of the contracted fundamental group for monomial algebras. 

Throughout this section we fix the following notation: let $\A$ be a connected monomial algebra.  Let $B$ be a monomial subalgebra of $A$ such that $Q_B$ is a subquiver of $Q_A$. Let $\Gamma_i$ be the connected components of $Q_B$. For each  $\Gamma_i$ we fix a maximal tree, in other words, we fix a maximal forest of $Q_B$. Since $A$ is connected, we extend the maximal forest of $Q_B$ to an \textit{extended maximal tree} $T$ of $Q_A$. 
\begin{definition}
  For a fixed extended maximal tree $T$ of $ Q_A$,  we define the quiver $Q_{\A|B,T}$  as follows: $(Q_{\A|B,T})_0=(Q_A)_0$ and  the set of arrows of $Q_{\A|B,T}$  is given by $((Q_A)_1\setminus (Q_B)_1)\cup T$. We define the contracted fundamental group $\pi_1(Q_{\A|B, T})$ with respect to a maximal tree $T$ to be the fundamental group of the quiver $Q_{\A|B,T}$.  
\end{definition}

Note that by construction $T$ is a maximal tree of $Q_{\A|B,T}$. If each $\Gamma_i$ is a tree, then $\pi_1(Q_{\A|B, T})=\pi_1(Q_{\A})$. We prove that the contracted fundamental group does not depend on the choice of a maximal tree: 

\begin{lemma}
  Let  $T, T'$ be two extended maximal trees of $Q_A$. Then $\pi_1(Q_{\A|B, T})\cong \pi_1(Q_{\A|B, T'})$.
\end{lemma}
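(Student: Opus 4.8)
The plan is to reduce the claim to a computation of first Betti numbers. The key preliminary observation is that the quiver $Q_{\A|B,T}$ depends only on the maximal forest of $Q_B$ that the extended maximal tree $T$ extends, and not on the further arrows of $T$. So first I would fix the maximal forest $F$ of $Q_B$ with $F\subseteq T$ and check that in fact $T\cap (Q_B)_1 = F$: if some arrow $a$ of $Q_B$ lay in $T\setminus F$, then, since $F$ restricts to a spanning tree on the connected component $\Gamma_j$ of $Q_B$ containing $a$, the two endpoints of $a$ would already be joined by a walk inside $F\subseteq T$, so $F\cup\{a\}\subseteq T$ would contain a cycle, contradicting that $T$ is a tree. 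Writing $T = F\sqcup (T\setminus (Q_B)_1)$ with $T\setminus (Q_B)_1\subseteq (Q_A)_1\setminus (Q_B)_1$, this gives
\[
  (Q_{\A|B,T})_1 \;=\; \big((Q_A)_1\setminus (Q_B)_1\big)\cup T \;=\; \big((Q_A)_1\setminus (Q_B)_1\big)\cup F,
\]
so the isomorphism type of $Q_{\A|B,T}$ is determined by $F$ alone. It therefore suffices to treat the case where $T$ and $T'$ extend two (possibly different) maximal forests $F$ and $F'$ of $Q_B$, and to show that the quivers $Q_F := ((Q_A)_1\setminus (Q_B)_1)\cup F$ and $Q_{F'} := ((Q_A)_1\setminus (Q_B)_1)\cup F'$ have isomorphic fundamental groups.

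Next I would note that both $Q_F$ and $Q_{F'}$ are connected with vertex set $(Q_A)_0$: they contain the spanning trees $T$ and $T'$ of the connected quiver $Q_A$, respectively (indeed $T\subseteq Q_F$ by the displayed identity). Since the fundamental group of a connected quiver is the fundamental group of its underlying graph, hence free of rank equal to the first Betti number $\beta_1$, and free groups of equal rank are isomorphic, it is enough to verify $\beta_1(Q_F) = \beta_1(Q_{F'})$. Using $(Q_B)_0 = (Q_A)_0$ and that the union $((Q_A)_1\setminus (Q_B)_1)\sqcup F$ is disjoint,
\[
  \beta_1(Q_F) \;=\; \#\big((Q_A)_1\setminus (Q_B)_1\big) + \#F - \#(Q_A)_0 + 1,
\]
and likewise for $Q_{F'}$ with $F'$ in place of $F$. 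Finally $\#F = \#F'$, because any maximal forest of $Q_B$ has exactly $\#(Q_B)_0$ minus the number of connected components of $Q_B$ many arrows, a quantity independent of the chosen forest. Hence $\beta_1(Q_F) = \beta_1(Q_{F'})$ and the lemma follows.

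The argument is routine once the reduction is in place; the only point that needs a moment's care is the equality $T\cap (Q_B)_1 = F$, that is, that an extended maximal tree cannot contain any arrow of $Q_B$ beyond those of the forest it extends — which is precisely the maximality of $F$ on each component $\Gamma_j$. If one wants an explicit isomorphism rather than merely equality of ranks, one can pass from $F$ to $F'$ by single-arrow exchanges of maximal forests and track the resulting Tietze transformations on the standard free presentations of the two fundamental groups; but for the statement as given, the Betti-number count suffices.
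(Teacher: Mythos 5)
Your proof is correct and follows essentially the same route as the paper: both observe that the two contracted fundamental groups are free, so it suffices to match their ranks, which reduces to counting arrows and vertices (equivalently, first Betti numbers). Your argument merely fills in the details behind the paper's phrase ``the number of arrows \ldots are the same by definition of maximal tree,'' namely that $T\cap (Q_B)_1$ is exactly the maximal forest $F$ and that $Q_{\A|B,T}$ is connected because it contains the spanning tree $T$.
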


\begin{proof}
  Note that $\pi_1(Q_{\A|B, T})$ and $ \pi_1(Q_{\A|B, T'})$ are free groups, so it is enough to check that their rank is the same. Note that the number of arrows of $Q_{\A|B, T}$ and $Q_{\A|B, T'}$ are the same by definition of maximal tree. The statement follows since $Q_{\A|B, T}$ and $Q_{\A|B, T'}$ have the same number of vertices. 
\end{proof}

Therefore, we write $\pi_1(Q_{\A|B, T}):=\pi_1(Q_{\A|B})$.

\begin{example}
  Let $A$ be an algebra with radical square zero algebra with Gabriel quiver $Q_A$, and let $B$ be a radical square zero subalgebra of $A$ with Gabriel quiver $Q_B$:
  \[
    \begin{tikzcd}
      {Q_A:} & 1 & 3 && {Q_B:} & 1 & 3 \\
      & 2 & 4 & 5 && 2 & 4 & 5
      \arrow["\gamma", tail reversed, no head, from=1-2, to=1-3]
      \arrow["\delta", dashed, from=1-3, to=2-3]
      \arrow["\gamma"', from=1-7, to=1-6]
      \arrow["\delta", from=1-7, to=2-7]
      \arrow["{\alpha_2}"', shift right=2, from=2-2, to=1-2]
      \arrow["{\alpha_1}", shift left, dashed, from=2-2, to=1-2]
      \arrow["{\beta_1}"', shift right, dashed, from=2-2, to=2-3]
      \arrow["{\beta_2}", shift left, from=2-2, to=2-3]
      \arrow["\epsilon", dashed, from=2-3, to=2-4]
      \arrow["{\alpha_1}", shift left, from=2-6, to=1-6]
      \arrow["{\alpha_2}"', shift right=2, from=2-6, to=1-6]
      \arrow["{\beta_1}"', from=2-6, to=2-7]
    \end{tikzcd}
  \]
  We take as a maximal tree $T$ the quiver having the dashed arrows. Then $Q_{\A|B,T}$ is the following quiver
  \[
    \begin{tikzcd}
      1 & 3 \\
      2 & 4 & 5
      \arrow["\epsilon", from=2-2, to=2-3]
      \arrow["\delta", from=1-2, to=2-2]
      \arrow["{\alpha_1}", from=2-1, to=1-1]
      \arrow["{\beta_1}"', from=2-1, to=2-2]
      \arrow["{\beta_2}", shift left=2, from=2-1, to=2-2]
    \end{tikzcd}
  \]
  Clearly, $\pi_1(Q_{\A|B, T})\cong \mathbb{Z}$.
\end{example}

We now establish a relation between $\pi_1(Q_A,I_A)/N$ and $\pi_1(Q_{\A|B})$.

\begin{theorem}
  \label{relincfund1}
  Let $\A$ be a connected monomial algebra.  Let $B$ be a monomial subalgebra of $A$ such that $Q_B$ is a subquiver of $Q_A$. Then \[\pi_1(Q_A,I_A)/N\cong \pi_1(Q_{\A|B}).\]
\end{theorem}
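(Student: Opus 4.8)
The plan is to reduce the statement to standard facts about fundamental groups of graphs. The starting point is that, because $A$ is monomial, the homotopy relation of Definition~\ref{def:fundamentalGroup} is generated by the cancellation moves $\alpha^{-1}\alpha\sim s(\alpha)$ and $\alpha\alpha^{-1}\sim t(\alpha)$ alone: if $I_A$ is a monomial ideal then $\sum_i\lambda_ip_i\in I_A$ forces each $\lambda_ip_i\in I_A$, so every minimal relation is (up to scalar) a single path and the second type of elementary homotopy never occurs. Hence $\pi_1(Q_A,I_A,*)$ is just the topological fundamental group of the underlying graph of $Q_A$; likewise $\pi_1(Q_{\A|B})=\pi_1(Q_{\A|B,T},*)$ is the fundamental group of the underlying graph of $Q_{\A|B,T}$, and both groups are free.

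Now fix the extended maximal tree $T$ of $Q_A$, so that $T\cap\Gamma_i$ is a spanning tree of each connected component $\Gamma_i$ of $Q_B$ and $T$ is a spanning tree of both $Q_A$ and $Q_{\A|B,T}$. For a vertex $x$ let $t_x$ be the unique reduced walk in $T$ from $*$ to $x$, and for an arrow $\alpha$ put $g_\alpha:=[\,t_{t(\alpha)}^{-1}\,\alpha\,t_{s(\alpha)}\,]$. By the usual description of $\pi_1$ of a graph relative to a maximal tree, $\pi_1(Q_A,I_A,*)$ is free with basis $\{g_\alpha\mid\alpha\in(Q_A)_1\setminus T\}$ and $\pi_1(Q_{\A|B,T},*)$ is free with basis $\{g_\alpha\mid\alpha\in(Q_{\A|B,T})_1\setminus T\}$ (the walks $t_x$ being the same in both quivers, since $T$ lies in both). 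Since $(Q_{\A|B,T})_1=\big((Q_A)_1\setminus(Q_B)_1\big)\cup T$, we have $(Q_{\A|B,T})_1\setminus T=\big((Q_A)_1\setminus(Q_B)_1\big)\setminus T$, so the second basis is exactly the part of the first basis indexed by arrows not lying in $Q_B$. Thus, once we prove the identity
\[
  N_B=\langle\!\langle\, g_\alpha\ \mid\ \alpha\in(Q_B)_1\setminus T\,\rangle\!\rangle
\]
(normal closure in $\pi_1(Q_A,I_A,*)$), the quotient $\pi_1(Q_A,I_A,*)/N_B$ becomes the free group on $\{g_\alpha\mid\alpha\in(Q_A)_1\setminus T\}$ modulo the normal closure of the sub-basis indexed by arrows of $Q_B$, hence free on the complementary sub-basis, which is a basis of $\pi_1(Q_{\A|B,T},*)=\pi_1(Q_{\A|B})$; the resulting map is the evident one.

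To establish the boxed identity I would first record that, for $x$ a vertex of $\Gamma_i$, the $T$-geodesic decomposes as $t_x=\tau_x\,w_i$, where $z_i$ is the unique vertex at which $T$ enters the subtree $\Gamma_i$, $w_i$ is the portion of $t_x$ from $*$ to $z_i$ (a walk in $Q_A$ independent of $x$), and $\tau_x$ is a walk inside $T\cap\Gamma_i$ from $z_i$ to $x$; well-definedness of $z_i$ uses only that $\Gamma_i$ is a connected subtree of the tree $T$. For the inclusion "$\supseteq$": if $\alpha\in(Q_B)_1\setminus T$ lies in $\Gamma_i$ then $g_\alpha=[\,w_i^{-1}\,\tau_{t(\alpha)}^{-1}\,\alpha\,\tau_{s(\alpha)}\,w_i\,]$, which is literally a defining generator of $N_B$, with $v=w_i$ and the parallel $Q_B$-walks $u=\alpha\,\tau_{s(\alpha)}$, $u'=\tau_{t(\alpha)}$. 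For "$\subseteq$": a defining generator $[\,v^{-1}(u')^{-1}u\,v\,]$ of $N_B$, with $u,u'$ parallel walks in $Q_B$ (necessarily inside one $\Gamma_i$) from $x$ to $y$ and $v$ a walk $*\to x$ in $Q_A$, is a conjugate of $[\,t_x^{-1}\big((u')^{-1}u\big)t_x\,]=[\,w_i^{-1}\,c'\,w_i\,]$, where $c'=\tau_x^{-1}(u')^{-1}u\,\tau_x$ is a closed walk in $\Gamma_i$ based at $z_i$; expanding the class of $c'$ in the free basis of $\pi_1(\Gamma_i,z_i)$ coming from the spanning tree $T\cap\Gamma_i$ and conjugating each basis element by $w_i$ rewrites $[\,w_i^{-1}c'w_i\,]$ as a word in the $g_\beta$ with $\beta\in(\Gamma_i)_1\setminus T\subseteq(Q_B)_1\setminus T$. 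Both sides being normal subgroups, the two inclusions yield the identity. (Alternatively one could start from the description of $N_B$ in Lemma~\ref{lemma:relativeParade}, but the argument above is self-contained.)

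I expect the bookkeeping in the inclusion "$\subseteq$" to be the main obstacle: one has to track the conjugating walks $w_i$ carefully so that rewriting a closed walk of $\Gamma_i$ through the tree $T\cap\Gamma_i$ produces exactly the generators $g_\beta$ rather than merely elements conjugate to them, and one has to check that the entry vertex $z_i$ of $\Gamma_i$ in $T$ is genuinely independent of the chosen vertex of $\Gamma_i$. Everything else is the standard free presentation of the fundamental group of a graph together with the elementary fact that $F(X\sqcup Y)/\langle\!\langle Y\rangle\!\rangle\cong F(X)$.
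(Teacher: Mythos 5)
Your proof is correct, but it takes a genuinely different route from the paper. The paper fixes the same extended maximal tree $T$, but then constructs an explicit map $\varphi$ on walks that reroutes every arrow of $Q_B$ through $T$, checks that it induces a well-defined surjective homomorphism $\overline{\varphi}:\pi_1(Q_A,I_A)\to\pi_1(Q_{\A|B})$ (using, as you do, that monomiality makes the homotopy relation pure cancellation), and then identifies $\mathrm{Ker}(\overline{\varphi})=N_B$ directly, the key technical step being the claim $[\varphi(w)^{-1}][w]\in N_B$, proved by decomposing an arbitrary closed walk into alternating segments lying in and outside $B$ and replacing the $B$-segments one at a time. You instead never construct a map on walks: you invoke the spanning-tree presentation to get free bases $\{g_\alpha \mid \alpha\in (Q_A)_1\setminus T\}$ and $\{g_\alpha \mid \alpha\in (Q_{\A|B,T})_1\setminus T\}$ of the two groups, prove the identity $N_B=\langle\!\langle g_\alpha \mid \alpha\in (Q_B)_1\setminus T\rangle\!\rangle$ via the gate-vertex decomposition $t_x=\tau_x w_i$, and conclude with $F(X\sqcup Y)/\langle\!\langle Y\rangle\!\rangle\cong F(X)$. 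Both inclusions in your boxed identity check out: the gate vertex $z_i$ is well defined because the chosen spanning tree of $\Gamma_i$ is a connected subtree of $T$ (and $T$ cannot contain any extra edge between vertices of $\Gamma_i$ without creating a cycle), and for ``$\subseteq$'' your worry about producing exactly the $g_\beta$ rather than conjugates is actually immaterial, since membership is being tested in a normal closure, so conjugates would suffice anyway. What your approach buys is an explicit free presentation of the quotient (which also makes the Betti-number count of Proposition~\ref{prop:dimension} immediate); what the paper's approach buys is a concrete isomorphism induced by an explicit map on walks, without appealing to the Nielsen--Schreier-type basis, and its kernel computation is where the real work sits in that version. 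Your justification that minimal relations of a monomial ideal are single paths, so that $\pi_1(Q_A,I_A)$ is the free edge-path group of the underlying graph, is also correct and is used implicitly by the paper as well.
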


\begin{proof}
  Fix an extended maximal tree $T$ and let $\widetilde{Q_A}$ and $\widetilde{Q_{A|B,T}}$ be the double quivers of $Q_A$ and $Q_{A|B,T}$, respectively. Consider the map \[\varphi: \widetilde{Q_A}\to \widetilde{Q_{A|B,T}}\] which sends an arrow (or a formal inverse of an arrow) $\alpha$ in $Q_B$ to the corresponding walk in the maximal tree $T$ in $Q_{\A|B, T}$ having the same source and target of $\alpha$. If $\alpha$ is an arrow (or a formal inverse of an arrow) in $Q_A$ but not in $Q_B$ then $\alpha$ is sent to itself.  By construction $\varphi(\alpha)^{-1}=\varphi(\alpha^{-1})$. The map $\varphi$ can be extended to $\operatorname{Walk}(Q_A)\to \operatorname{Walk}(Q_{\A|B})$ which  sends a concatenation of arrows and formal inverses $w=w_n\cdots w_1$ to $\varphi(w_n)\cdots \varphi(w_1)$.

  We construct the isomorphism by considering the map induced from $\varphi$  
  \[
    \overline{\varphi}:\pi_1(Q_A,I_A) \to \pi_1(Q_{\A|B})
  \] 
  which sends $[w]=[w_l\cdots w_1]$ to $\overline{\varphi}([w]):=[\varphi(w)]=[\varphi(w_l)\cdots \varphi(w_1)]$. We first check that $\overline{\varphi}$ is well-defined: consider $[w]=[w']$. Since $A$ is a monomial algebra, then the walks $w$ and $w'$ differ only by some sequences of arrows and formal inverses of arrows followed by the inverses of such sequences. Since for each arrow (or formal inverse)  we have $\varphi(\alpha)^{-1}=\varphi(\alpha^{-1})$, then $\overline{\varphi}([w])$ and $\overline{\varphi}([w'])$ differ only by some sequences of arrows and formal inverses of arrows followed by the inverses of such sequences. Hence, $\overline{\varphi}([w])=\overline{\varphi}([w'])$. 

  By construction $\overline{\varphi}$ is a group homomorphism. Since $A$ is monomial, we have that 
  $[\varphi(w)]$ is also in $\pi_1(Q_A,I_A)$. We claim that $[\varphi(w)^{-1}][w]\in N_B$ for any walk $w$. Indeed, without loss of generality, we can assume that $w$ is a walk of the form $w=v_n\cdots v_1$ where $v_1$ is a walk in $A$ but not in $B$, $v_2$ is a walk in $B$ and so on such that $v_n$ is a walk in $A$ but not in $B$. Note that $n$ is an odd number since $w$ is a closed walk. The walk $\varphi(w)$ will differ from $w$ in $v_{2i}$ for $0\leq i \leq  \frac{n-1}{2}$. We denote by $\varphi_{v_i}(w):=v_n\cdots v_{i+1}\varphi(v_i)v_{i-1}\cdots v_1$. More generally we denote by $\varphi_{v_iv_j}$ the composition of $\varphi_{v_i}$ with $\varphi_{v_j}$. By construction $\varphi_{v_{n-1}v_{n-3}\cdots v_2}=\varphi$. Then 
  \[\varphi(w)^{-1}w=
    (\varphi_{v_{n-1}v_{n-3}\cdots v_2}(w)^{-1}\varphi_{v_{n-3}\cdots v_2}(w))
    \cdots 
    ( \varphi_{v_4v_2}(w)^{-1}\varphi_{v_2}(w))
    (\varphi_{v_2}(w)^{-1}w).
  \]
  Note that each $\varphi_{v_{2i}\cdots v_2}(w)^{-1}\varphi_{v_{2i-2}\cdots v_2}(w)$ lies in $N_B$ for $2\leq i \leq \frac{n-1}{2}$. Hence, $[\varphi(w)^{-1}][w]\in N_B$. Clearly $\overline{\varphi}$ is surjective so we only need to show that $\mathrm{Ker}(\overline{\varphi})=N_B$. We first prove the inclusion $\mathrm{Ker}{(\overline{\varphi}})\subseteq N_B$. If $[w]\in \mathrm{Ker}{(\overline{\varphi}})$, then $[\varphi(w)]=[e]$ in $\pi_1(Q_{A|B})$ and $\pi_1(Q_A,I_A)$. Therefore, by the above claim $[\varphi(w)^{-1}][w]=[e][w]=[w]\in N_B$. 

  In order to prove the reverse inclusion, it is enough to show that if $u$ and $u'$ are parallel walks then $(u')^{-1}u\in \mathrm{Ker}(\overline{\varphi})$. This is equivalent to show $\overline{\varphi}(u)=\overline{\varphi}(u')$.  Note that both $\varphi(u)$ and $\varphi(u')$ are parallel walks having only arrows and formal inverses from the maximal tree $T$. Since $T$ is a tree then vertices are connected by exactly one path (up to homotopy). Therefore, $[\varphi(u)]=[\varphi(u')]$. 
\end{proof}

\begin{proposition}\label{prop:dimension}
  Let $A$ be a connected monomial algebra. Let $B$ be a monomial subalgebra of $A$ such that $Q_B$ is a subquiver of $Q_A$. Then $\beta_1(Q_{A|B}) = \beta_1(Q_A) - \beta_1(Q_B)$. Therefore, it follows that $\pi_1(\A|B)^{\mathrm{ab}}\cong \pi_1(Q_{\A},I_{\A})^{\mathrm{ab}}/\pi_1(Q_B,I_B)^{\mathrm{ab}}$ and that 
  \[
    \mathrm{dim}_k(\mathrm{Hom}(\pi_1(Q_{\A|B}),k))=\pirank(\A)-\pirank(B)
  \] 
\end{proposition}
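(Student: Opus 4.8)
The plan is to prove the Betti number identity first and then deduce the two consequences formally. Recall that for a connected quiver $Q$ with $n$ vertices, $m$ arrows and $c$ connected components, $\beta_1(Q) = m - n + c$. By the construction of $Q_{\A|B,T}$ its vertex set is $(Q_A)_0$, so it has the same number $n$ of vertices as $Q_A$, and its arrow set is $((Q_A)_1 \setminus (Q_B)_1) \cup T$, where $T$ is the extended maximal tree of $Q_A$ containing the chosen maximal forest of $Q_B$. Writing $m_A = |(Q_A)_1|$, $m_B = |(Q_B)_1|$ and $|T|$ for the number of arrows in $T$, the number of arrows of $Q_{\A|B,T}$ is $m_A - m_B + |T|$. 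Since $T$ is a spanning tree of the connected quiver $Q_A$, we have $|T| = n - 1$; and since the restriction of $T$ to each connected component $\Gamma_i$ of $Q_B$ is a spanning tree of $\Gamma_i$, the total number of arrows of $T$ lying in $Q_B$ equals $n - c_B$, where $c_B$ is the number of connected components of $Q_B$. First I would observe that $Q_{\A|B,T}$ is connected (it contains the spanning tree $T$), so $\beta_1(Q_{\A|B}) = (m_A - m_B + (n-1)) - n + 1 = m_A - m_B$; comparing with $\beta_1(Q_A) = m_A - n + 1$ and $\beta_1(Q_B) = m_B - n + c_B$, one gets $\beta_1(Q_A) - \beta_1(Q_B) = m_A - m_B - 1 + c_B$.

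Here I notice a subtlety: these two expressions agree only when $c_B = 1$. The resolution is that the chosen maximal forest of $Q_B$ has exactly $n - c_B$ arrows, and extending it to a spanning tree $T$ of $Q_A$ adds exactly $c_B - 1$ further arrows (those are the arrows of $T$ not in any $\Gamma_i$, needed to connect the $c_B$ pieces, together with arrows reaching vertices outside $Q_B$ — but by the standing assumption $(Q_A)_0 = (Q_B)_0$, so in fact every vertex lies in some $\Gamma_i$). Thus $|T| = (n - c_B) + (c_B - 1) = n-1$ as before, and the count of arrows of $Q_{\A|B,T}$ that I should use is $m_A - m_B + (n - c_B)$, since only the $n - c_B$ forest arrows of $Q_B$ are re-added, not all of $T$. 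Re-doing the computation: $\beta_1(Q_{\A|B}) = \bigl(m_A - m_B + (n - c_B)\bigr) - n + 1 = m_A - m_B - c_B + 1 = \beta_1(Q_A) - \beta_1(Q_B)$, as claimed. The main obstacle is precisely getting this bookkeeping of which arrows of $T$ lie in $Q_B$ correct; once the definition of $Q_{\A|B,T}$ is read carefully (only $T \cap Q_B$, i.e.\ the maximal forest, is re-added, since the non-$Q_B$ arrows of $T$ are already among $(Q_A)_1 \setminus (Q_B)_1$), the identity drops out.

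For the consequences: by Theorem \ref{relincfund1} we have $\pi_1(Q_A,I_A)/N_B \cong \pi_1(Q_{\A|B})$, and since $Q_{\A|B}$ carries no relations, $\pi_1(Q_{\A|B})$ is a free group of rank $\beta_1(Q_{\A|B})$. For the abelianization statement I would use that abelianization is right-exact, so from the quotient $\pi_1(Q_A,I_A) \twoheadrightarrow \pi_1(Q_A,I_A)/N_B$ with kernel $N_B$ I get $\pi_1(\A|B)^{\mathrm{ab}} \cong \pi_1(Q_A,I_A)^{\mathrm{ab}} / \langle\text{image of } N_B\rangle$; identifying the image of $N_B$ in $\pi_1(Q_A,I_A)^{\mathrm{ab}}$ with $\pi_1(Q_B,I_B)^{\mathrm{ab}}$ requires noting that $N_B$ is generated by conjugates of elements $[v^{-1}(u')^{-1}uv]$ with $u,u'$ parallel walks in $Q_B$, and that modulo commutators such an element becomes $[(u')^{-1}u]$, i.e.\ precisely a relator coming from $\pi_1(Q_B,I_B)$; since $(Q_A)_0 = (Q_B)_0$, the inclusion $\pi_1(Q_B,I_B) \to \pi_1(Q_A,I_A)$ on abelianizations has image exactly the subgroup generated by closed walks in $Q_B$, which is what the image of $N_B$ is. Finally, taking $\operatorname{Hom}(-,k)$ and using $\dim_k \operatorname{Hom}(\pi_1(Q_{\A|B}),k) = \beta_1(Q_{\A|B})$ (as $\pi_1(Q_{\A|B})$ is free of that rank) together with $\pirank(\A) = \beta_1(Q_\A)$, $\pirank(B) = \beta_1(Q_B)$ for monomial algebras (by \cite[Theorem C]{Br}), the dimension formula follows from the Betti number identity.
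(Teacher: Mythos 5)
Your proposal is correct and follows essentially the same route as the paper: the Betti identity is exactly the bookkeeping from the construction of $Q_{\A|B,T}$ (and your corrected count, using that only the $n-c_B$ forest arrows of $T$ lie in $Q_B$, is the right one), while the dimension formula comes from freeness of $\pi_1(Q_{\A|B})$ and $\pirank = \beta_1$ for monomial algebras, just as in the paper's (much terser) proof. The only divergence is the abelianization claim, where you argue structurally via right-exactness of abelianization and identifying the image of $N_B$ with the closed walks of $Q_B$, whereas the paper simply notes that $\pi_1(\A|B)^{\mathrm{ab}}$ is finitely generated free abelian and compares ranks; your version is slightly more informative and equally valid.
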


\begin{proof}
  By construction of the quiver $Q_{A|B}$, the statement follows for the Betti numbers. The statement regarding the abelianisation follows from the fact that $\pi_1(\A|B)^{\mathrm{ab}}$ is a finitely generated free abelian group, so that it is suffices to compare the rank. Lastly, the statement regarding the $\pi_1$-rank follows similarly from the Betti numbers. 
\end{proof}

As for the dual fundamental group, the dimension of the dual of the contracted fundamental group can also be bounded in combinatorial terms:

\begin{corollary}
  \label{inerelfund}
  Let $B$ be a subalgebra of a finite-dimensional algebra $A=kQ_A/I_A$ where $Q_B$ is a subquiver of $Q_A$. Then the dimension of $\mathrm{Hom}(\pi_1(Q_A,I_A)/N_B,k)$ is less than or equal to $(m_A-m_B)-(n_A-n_B)+(c_A-c_B)$, where $m_A, n_A, c_A$ and $m_B, n_B, c_B$ are the number of arrows, vertices and connected components of $A$ and $B$, respectively. 
\end{corollary}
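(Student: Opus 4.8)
The corollary is a purely combinatorial upper bound, so the strategy is to reduce it to the two results already at hand: the bound $\dim_k(\Hom(\pi_1(Q,I),k)) \leq \beta_1(Q)$ from \cite[Lemma 1.7]{Br}, applied to the contracted quiver, together with the identity $\pi_1(Q_A,I_A)/N_B \cong \pi_1(Q_{\A|B})$ of \cref{relincfund1}. However, \cref{relincfund1} is stated only for connected monomial algebras, whereas the corollary makes no such hypothesis, so the first thing I would do is handle the reduction to that case and the passage to arbitrary (non-monomial, possibly disconnected) $A$.

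\textbf{Step 1: the connected monomial case.} Assume first that $A$ is connected and monomial. By \cref{relincfund1}, $\pi_1(Q_A,I_A)/N_B \cong \pi_1(Q_{\A|B})$, which is a free group (it is the fundamental group of the quiver $Q_{\A|B}$), so $\dim_k \Hom(\pi_1(Q_{\A|B}),k)$ equals the rank of that free group, namely $\beta_1(Q_{\A|B})$. By \cref{prop:dimension}, $\beta_1(Q_{\A|B}) = \beta_1(Q_A) - \beta_1(Q_B)$. Writing $\beta_1(Q_A) = m_A - n_A + c_A$ and $\beta_1(Q_B) = m_B - n_B + c_B$ (here $c_A = 1$ since $A$ is connected, but keeping $c_A$ general costs nothing), this gives exactly $(m_A - m_B) - (n_A - n_B) + (c_A - c_B)$, with equality in this case.

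\textbf{Step 2: dropping connectedness and monomiality.} For a general finite-dimensional $A = kQ_A/I_A$ with $Q_B \subseteq Q_A$, first use \cite[Lemma 1.7]{Br}: $\dim_k \Hom(\pi_1(Q_A,I_A)/N_B, k)$ is bounded above by $\dim_k \Hom(\pi_1(Q_A,I_A),k)$ minus the contribution forced to vanish by $N_B$; more directly, since $\Hom(-,k)$ sends the surjection $\pi_1(Q_A,I_A) \twoheadrightarrow \pi_1(Q_A,I_A)/N_B$ to an injection, and since $\pi_1(Q_A,I_A)/N_B$ is a quotient of $\pi_1(Q_{\A|B, T})$ for any extended maximal tree $T$ (the monomial proof of \cref{relincfund1} shows $\overline\varphi$ is always a surjection $\pi_1(Q_A,I_A)/N_B \twoheadrightarrow \pi_1(Q_{\A|B})$, only the injectivity used monomiality), we get a surjection $\pi_1(Q_{\A|B}) \twoheadrightarrow \pi_1(Q_A,I_A)/N_B$ in general. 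Hence $\dim_k \Hom(\pi_1(Q_A,I_A)/N_B,k) \leq \dim_k \Hom(\pi_1(Q_{\A|B}),k) = \beta_1(Q_{\A|B})$, which by \cref{prop:dimension}'s Betti-number computation equals $(m_A - m_B) - (n_A - n_B) + (c_A - c_B)$. For disconnected $A$ one applies this componentwise, using the convention that $\pi_1$ of a disconnected quiver is the direct product over connected components, so both sides split as sums over components of $Q_A$ and one sums up the per-component bounds.

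\textbf{Main obstacle.} The subtle point is precisely this passage beyond the monomial, connected setting: \cref{relincfund1} and \cref{prop:dimension} are both stated under the standing hypothesis of that subsection ($A$ connected monomial). I need to verify that the construction of the comparison map $\overline\varphi$ and the inclusion $N_B \supseteq \Ker(\overline\varphi)$'s reverse — i.e. that $\overline\varphi$ is well-defined and surjective — survives without monomiality, giving the \emph{inequality} even when the isomorphism fails, and that the Betti-number bookkeeping of \cref{prop:dimension} is genuinely combinatorial (depending only on $Q_A$, $Q_B$) and so unaffected. Once that is in place the rest is immediate arithmetic.
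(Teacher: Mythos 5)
Your overall strategy (reduce to the relation-free contracted quiver and its Betti number, then dualize) is the same as the paper's, and Step 1 is fine; but the step that carries all the extra generality of the corollary, namely producing a surjection onto $\pi_1(Q_A,I_A)/N_B$ for a not necessarily monomial $A$, is not established by your argument. You assert that in the proof of Theorem~\ref{relincfund1} ``only the injectivity used monomiality'', so that $\overline{\varphi}$ still gives a surjection $\pi_1(Q_A,I_A)/N_B\twoheadrightarrow\pi_1(Q_{\A|B})$ in general, and then you silently flip the direction to the surjection $\pi_1(Q_{\A|B})\twoheadrightarrow\pi_1(Q_A,I_A)/N_B$ that you actually need. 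Both halves are problematic. First, the \emph{well-definedness} of $\overline{\varphi}$, not just its injectivity, uses monomiality: for a non-monomial ideal a minimal relation makes two genuinely different parallel paths $u\sim_I v$ homotopic, and their images $\varphi(u),\varphi(v)$ need not be homotopic in $Q_{\A|B}$, since that quiver carries no relations; the paper's argument for well-definedness explicitly invokes that for monomial $I$ homotopies only cancel subwalks of the form $\alpha^{-1}\alpha$. Second, even if such a map $\pi_1(Q_A,I_A)/N_B\twoheadrightarrow\pi_1(Q_{\A|B})$ existed, dualizing it gives $\dim_k\Hom(\pi_1(Q_A,I_A)/N_B,k)\geq\beta_1(Q_{\A|B})$, the inequality in the wrong direction, so this map cannot be the source of the bound.

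The correct (and the paper's) route to the surjection you want goes through the relation-free presentation: the identity on walks induces a surjection $\pi_1(Q_A)\twoheadrightarrow\pi_1(Q_A,I_A)$, because imposing $I_A$ only adds homotopies, and it carries the subgroup $N_B$ formed inside $\pi_1(Q_A)$ onto the subgroup $N_B$ inside $\pi_1(Q_A,I_A)$; hence one gets a surjection $\pi_1(Q_A)/N_B\twoheadrightarrow\pi_1(Q_A,I_A)/N_B$. Now $kQ_A$ and $kQ_B$ \emph{are} monomial, so Theorem~\ref{relincfund1} and Proposition~\ref{prop:dimension} legitimately apply to them and give $\dim_k\Hom(\pi_1(Q_A)/N_B,k)=\beta_1(Q_A)-\beta_1(Q_B)$; left-exactness of $\Hom(-,k)$ then yields the stated bound. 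With this replacement (and your componentwise remark for disconnected $Q_A$, which is harmless), the argument is complete; as written, the key surjection is unjustified and the justification offered is false.
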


\begin{proof}
  Note that we have the following surjective map 
  \begin{equation*}
    \begin{tikzcd}
      \pi_1(Q)/N \ar[two heads]{r} & \pi_1(Q,I)/N_B.
    \end{tikzcd}
  \end{equation*}
  Therefore, as $\Hom(-,k)$ is left exact, we get the following inclusion of vector spaces 
  \begin{equation*}
    \begin{tikzcd}
      \Hom(\pi_1(Q,I)/N_B,k) \ar[hook]{r} & \Hom(\pi_1(Q)/N_B,k). 
    \end{tikzcd}
  \end{equation*}
  From Proposition~\ref{prop:dimension}, we have that $\dim_k(\Hom(\pi_1(Q)/N_B,k)) = \beta_1(Q_A) - \beta_1(Q_B)$ which is equal to  $(m_A-m_B)-(n_A-n_B)+(c_A-c_B)$, where $m_A, n_A, c_A$ and $m_B, n_B, c_B$ are the number of arrows, vertices and connected components of $A$ and $B$, respectively. Therefore, the bound on the dimension follows.  
\end{proof}

Note that in the case that $B$ is a subalgebra of $A$ and $Q_B$ is a tree we have that $N_B$ is trivial and therefore we recover the inequality in \cite[Lemma 1.7]{Br}.

\printbibliography 
\end{document}